\numberwithin{equation}{section}
\newcommand{\bx}{{\bf x} }
\newcommand{\nn}{\nonumber}
\newcommand{\lag}{\langle}
\newcommand{\rag}{\rangle}
\newcommand{\wh}{\widehat}
\newcommand{\be}{\begin{equation}}
\newcommand{\ee}{\end{equation}}
\newcommand{\ba}{\begin{array}}
\newcommand{\ea}{\end{array}}
\newcommand{\bea}{\begin{eqnarray}}
\newcommand{\eea}{\end{eqnarray}}
\newcommand{\beas}{\begin{eqnarray*}}
\newcommand{\eeas}{\end{eqnarray*}}
\newtheorem{remark}{Remark}[section]
\newtheorem{example}[theorem]{Example}
\newcommand{\eps}{\varepsilon}
\newcommand{\gm}{\gamma}
\newcommand{\bR}{{\mathbb R}}
\newcommand{\bN}{{\mathbb N}}
\newcommand{\bZ}{{\mathbb Z}}
\newcommand{\fl}{\frac}
\newcommand{\p}{\partial}
\newcommand{\fe}{\mathrm{e}}
\title{A uniformly and optimally accurate method for the Klein-Gordon-Zakharov system in simultaneous high-plasma-frequency and subsonic limit regime\thanks{This work was supported by the Alexander von Humboldt Foundation.}}
\author{Chunmei Su\thanks{Zentrum Mathematik, Technische Universit\"{a}t M\"unchen, 85748 Garching bei M\"unchen, Germany ({\tt sucm13@163.com})} \and
Xiaofei Zhao\thanks{School of Mathematics and Statistics, Wuhan University, 430072 Wuhan, China ({\tt matzhxf@whu.edu.cn})}}
\date{}
\begin{document}

\maketitle

\begin{abstract}
We present a uniformly and optimally accurate numerical method for solving the Klein-Gordon-Zakharov (KGZ) system with two dimensionless parameters $0<\eps\le1$ and $0<\gm\le 1$, which are inversely proportional to the plasma frequency and the acoustic speed, respectively. In the simultaneous high-plasma-frequency and subsonic limit regime, i.e. $\eps<\gm\to 0^+$, the KGZ system collapses to a cubic Schr\"odinger equation, and the solution propagates waves with $O(\eps^2)$-wavelength in time and  meanwhile contains rapid outgoing initial layers with speed $O(1/\gm)$ in space due to the incompatibility of the initial data. By presenting a multiscale decomposition of the KGZ system, we propose a multiscale time integrator Fourier pseduospectral method which is explicit, efficient and uniformly accurate for solving the KGZ system for all $0<\eps<\gm\leq1$. Numerical results are reported to show the efficiency and accuracy of scheme. Finally, the method is applied to investigate the convergence rates of the KGZ system to its limiting models when $\eps<\gm\to 0^+$.
\end{abstract}

\begin{keywords}
Klein-Gordon-Zakharov system, high-plasma-frequency limit, subsonic limit, multiscale decomposition, numerical scheme, uniformly accurate
\end{keywords}
\begin{AMS}
35L70, 65N12, 65N15, 65N35
\end{AMS}

\pagestyle{myheadings}\thispagestyle{plain}
\section{Introduction}\label{sec1}
The Zakharov type models are of paramount importance for studying the Langmuir turbulence in plasma dynamics \cite{Zak1,Zak3,Zak4,app2,Dendy,app3}. As one of them, the Klein-Gordon-Zakharov (KGZ) system was derived from the Euler-Maxwell equations to describe the interaction between Langmuir waves and ion sound waves in the plasma \cite{Colin,KGZ-limit,Zak1,Texier}. We shall consider in this work, the KGZ system ($d=1,2,3$) in its dimensionless form \cite{KGZ,KGZ-limit2,Colin,KGZ-limit}:
\begin{subequations}\label{KGZ}
  \begin{align}
    & \eps^2\partial_{tt}\psi(\bx,t)-\Delta \psi(\bx,t)+\frac{1}{\eps^2}\psi(\bx,t)
    +\psi(\bx,t)\phi(\bx,t)=0,\label{psi}\\
    & \gm^2\partial_{tt}\phi(\bx,t)-\Delta \phi(\bx,t)-\Delta \psi^2(\bx,t)=0,\quad x\in\bR^d,\quad t>0,\label{phi}\\
    &\psi(\bx,0)= \psi_0(\bx),\ \partial_t\psi(\bx,0)=\frac{\psi_1(\bx)}{\eps^2},\  \phi(\bx,0)= \phi_0(\bx),\ \partial_t\phi(\bx,0)=\frac{\phi_1(\bx)}{\gamma},
     \end{align}
\end{subequations}
where
$\psi:=\psi(\bx,t):\bR^d\times[0,\infty)\to\bR$ and $\phi:=\phi(\bx,t):\bR^d\times[0,\infty)\to\bR$ are the unknowns denoting respectively, the fast time scale component of the electric field and the deviation of ion density from a constant equilibrium. Here $0<\eps\leq1$ and $0<\gm\leq1$ are introduced \cite{Colin,KGZ-limit,KGZ} as two dimensionless parameters that are inversely proportional to the plasma frequency and the ion sound speed, respectively, and $\psi_0,\psi_1,\phi_0$ and $\phi_1$ are given real-valued initial functions which are bounded for $\eps,\gamma\in(0,1]$. As is well-known, the energy of the KGZ system (\ref{KGZ}) is conserved as
\begin{align}
E(t)&:=\int_{\bR^d} \left[\eps^2 \left(\partial_t\psi\right)^2 +
\left|\nabla \psi \right|^2 + \frac{1}{\eps^2}\psi^2 +
\frac{\gamma^2}{2} \left|\nabla \varphi\right|^2
+\frac{1}{2}\phi^2 + \phi  \psi^2 \right]d\bx\nonumber\\
&\equiv \int_{\bR^d} \left[\frac{1}{\eps^2}\psi_1^2 +
\left|\nabla \psi_0 \right|^2 + \frac{1}{\eps^2}\psi_0^2 +
\frac{1}{2} \left|\nabla \varphi_0\right|^2
+\frac{1}{2}\phi_0^2 + \phi_0  \psi_0^2 \right]d\bx=E(0),\  t\ge0,
\label{energy}
\end{align}
where $\varphi(\bx,t)$ solves $\Delta \varphi(\bx,t) =
\partial_t\phi(\bx,t)$ with $\displaystyle\lim_{|\bx|\to \infty}\varphi(\bx,t)=0$ and $\varphi_0(\bx)=\Delta^{-1}\phi_1(\bx)$.

In the literature, the KGZ system has been studied in different parameter regimes both analytically and numerically. In the classical regime of (\ref{KGZ}), i.e. $\eps=O(1)$ and $\gm=O(1)$, the well-posedness of the Cauchy problem has been established in \cite{Well2,Well1}, and numerical discretizations equipped with finite difference time domain method \cite{classical1} or finite element method \cite{classical2} or spectral element method \cite{classical3} or exponential wave integrator \cite{XZ} have been considered.
When $\gm=O(1)$ and $\eps\ll 1$, the KGZ system (\ref{KGZ}) is in the \emph{high-plasma-frequency limit regime}, and (\ref{KGZ}) has been proved to converge to the Zakharov system \cite{Colin,Colin2,Masmoudi2008} as $\eps\to0$. The solution of (\ref{KGZ}) in such regime propagates waves with wavelength at $O(\eps^2)$ in time, which causes severe numerical burden in computations, since classical schemes would require step size smaller than the wavelength. To enlarge the step size, a multiscale time integrator with uniform first order accuracy for $\eps\in(0,1]$ was proposed based on a decomposition by frequency in \cite{KGZ1}, and later a class of oscillatory integrators were proposed in \cite{Kath2019} to further overcome the numerical loss of derivative in rigorous error analysis. On the other hand, when $\eps=O(1)$ and $\gm\ll1$ in (\ref{KGZ}), which is known as the \emph{subsonic limit regime}, the KGZ system reduces to the nonlinear Klein-Gordon equation as $\gm\to0$ \cite{Daub}. In this regime, similar to the subsonic limit of Zakharov system  \cite{Added,Ozawa,Masmoudi2008,Weinstein,ZE3}, the solution of (\ref{KGZ}) propagates waves with wavelength at $O(\gm)$ in time and contains outgoing initial layers at speed $O(1/\gm)$ in space. To numerically handle the highly oscillatory behaviours here, an asymptotic consistent formulation was utilised to propose a finite difference method \cite{KGZ2} and a multiscale time integrator \cite{KGZ-subsonic1} with accuracy uniform for $\gm\in(0,1]$. The last but more challenging regime of the KGZ system is the \emph{simultaneous high-plasma-frequency and subsonic limit regime}, i.e. $\eps,\gm\ll1$ in (\ref{KGZ}). As $\eps,\gm\to0$, Masmoudi and Nakanishi showed the convergence of (\ref{KGZ}) to different limit equations under the critical case $\eps<\gamma$ \cite{KGZ-limit} and the super critical case $\eps>\gamma$ \cite{KGZ-limit2}.
In the critical case $\eps<\gamma\to0$, the KGZ system (\ref{KGZ}) converges to a cubic Schr\"{o}dinger equation \cite{KGZ-limit}:
 \be\label{Sch}
  \left\{ \begin{split}
&2i\p_t z_{\rm nls}(\bx,t)-\Delta z_{\rm nls}(\bx,t)-2|z_{\rm nls}(\bx,t)|^2z_{\rm nls}(\bx,t)=0,\quad \bx\in\bR^d,\  t>0,\\
    &z_{\rm nls}(\bx,0)=\fl{1}{2}\left(\psi_0(\bx)-i\psi_1(\bx)\right),
    \end{split}\right.
 \ee
in the sense that
\be\label{app}
\psi\to e^{it/\eps^2}z_{\rm nls}+e^{-it/\eps^2}\overline{z_{\rm nls}},\quad \phi\to -2|z_{\rm nls}|^2+I_{\rm nls},\quad
\eps<\gm\to0,
\ee
where $I_{\rm nls}$ is the free wave defined by
\begin{equation*}\left\{
\begin{split}
&\gm^2\p_{tt}I_{\rm nls}(\bx,t)-\Delta I_{\rm nls}(\bx,t)=0,\quad \bx\in\bR^d,\  t>0,\\
&I_{\rm nls}(\bx,0)=\phi_0(\bx)+2|z_{\rm nls}(\bx,0)|^2=\phi_0(\bx)+\frac{1}{2}[\psi_0^2(x)+\psi_1^2(x)],\\
& \p_t I_{\rm nls}(\bx,0)=\phi_1(\bx)/\gm.
\end{split}\right.
\end{equation*}
The asymptotic behaviour of the solution (\ref{app}) in the limit regime $\eps<\gm\ll1$ indicates that the solution $\psi$ propagates waves with wavelength at $O(\eps^2)$ in time and $\phi$ contains a fast outgoing initial layer with speed at $O(1/\gm)$ in space.
The amplitude of the initial layer is determined by the incompatibility of the given initial data in (\ref{KGZ}), which has a remarkable influence on the behaviour of the solution and the convergence rate in (\ref{app}). To illustrate this, we take an one-dimensional example: $d=1$, $\bx=x$ in (\ref{KGZ}), $\gamma=2\eps$ and
\be\label{psi0}
\psi_0(x)=\mathrm{sech}(x^2),\quad \psi_1(x)=\frac{\fe^{-x^2}}{2},\quad x\in\bR,
\ee
with the following two cases of $\phi_0(x)$ and $\phi_1(x)$:

(i) compatible initial data:
\begin{equation}\label{compatiable}
\begin{split}
&\phi_0(x)=-\frac{1}{2}(\psi_0^2(x)+\psi_1^2(x)),\quad \phi_1(x)=-4\gamma \mathrm{Re}\left(z_{\rm nls}(x,0)\overline{\partial_tz_{\rm nls}}(x,0)\right),
\end{split}
\end{equation}
which perfectly matches with the limit (\ref{app}) in initial position and derivative. Here $\mathrm{Re}(f)$ represents the real part of $f$.

(ii) incompatible initial data:
\be\label{E1}
\phi_0=-\frac{1}{2}(\psi_0^2(x)+\psi_1^2(x))+\rho(x),\quad
\phi_1(x)=-4\gamma \mathrm{Re}\left(z_{\rm nls}(x,0)\overline{\partial_tz_{\rm nls}}(x,0)\right),\ee
 where we add the incompatibility
$$
\rho(x)=g\left(\fl{x+18}{10}\right)g\left(\fl{18-x}{9}\right)\cos(2x+\pi/4),\quad
g(x)=\fl{f(x)}{f(x)+f(1-x)},
$$
with $f(x)=e^{-1/x}\chi_{(0,\infty)}$ and $\chi_\Omega$ being the characteristic function of the domain $\Omega$.  Figure \ref{fig:incomp} displays the profiles of the solutions in the two cases under different $\eps$.
\begin{figure}[t!]
\centerline{Compatible case:}
\begin{minipage}[t]{0.5\linewidth}
\centering
\includegraphics[height=4cm,width=6.75cm]{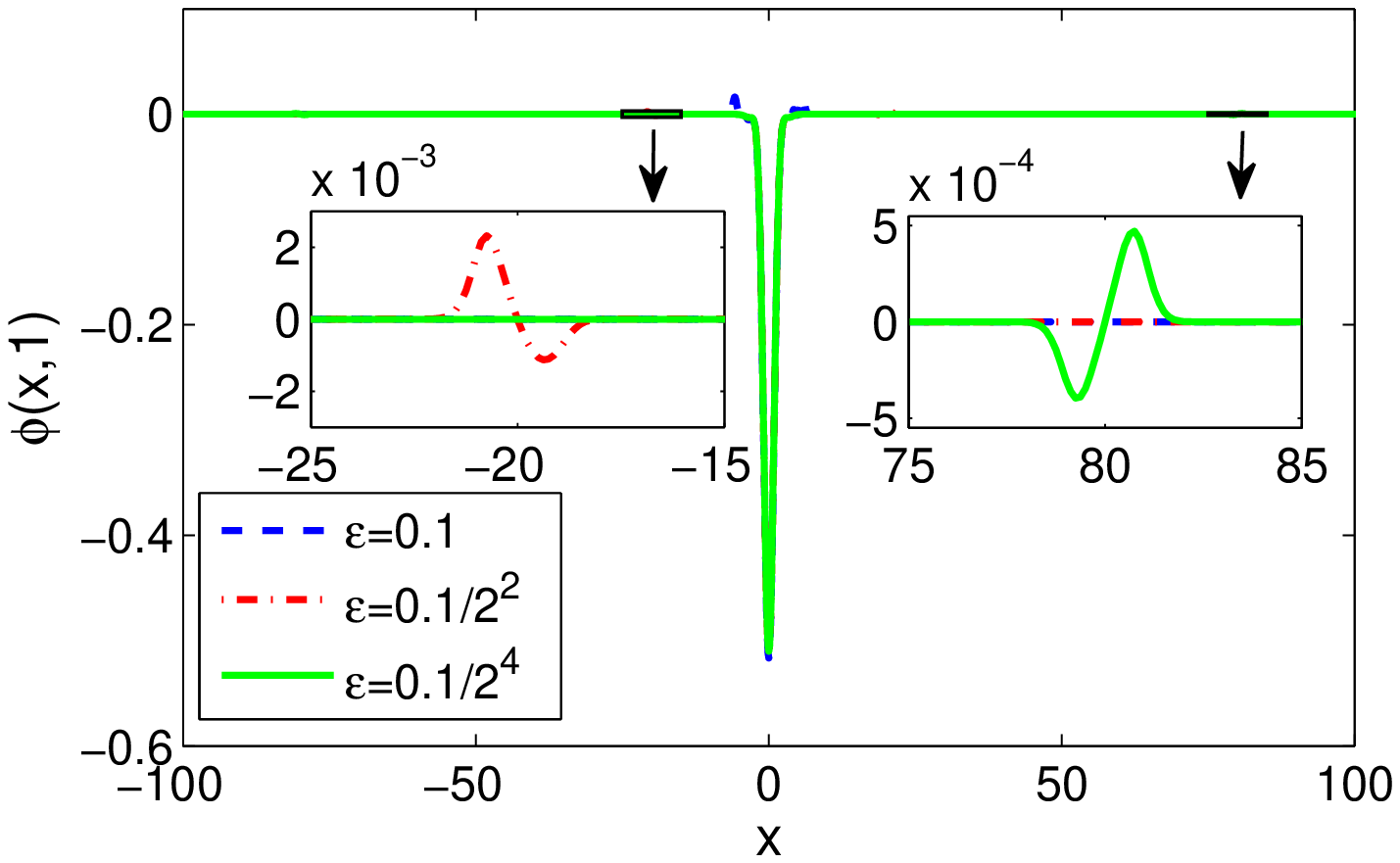}
\end{minipage}%
\hspace{1mm}
\begin{minipage}[t]{0.5\linewidth}
\centering
\includegraphics[height=4cm,width=6.9cm]{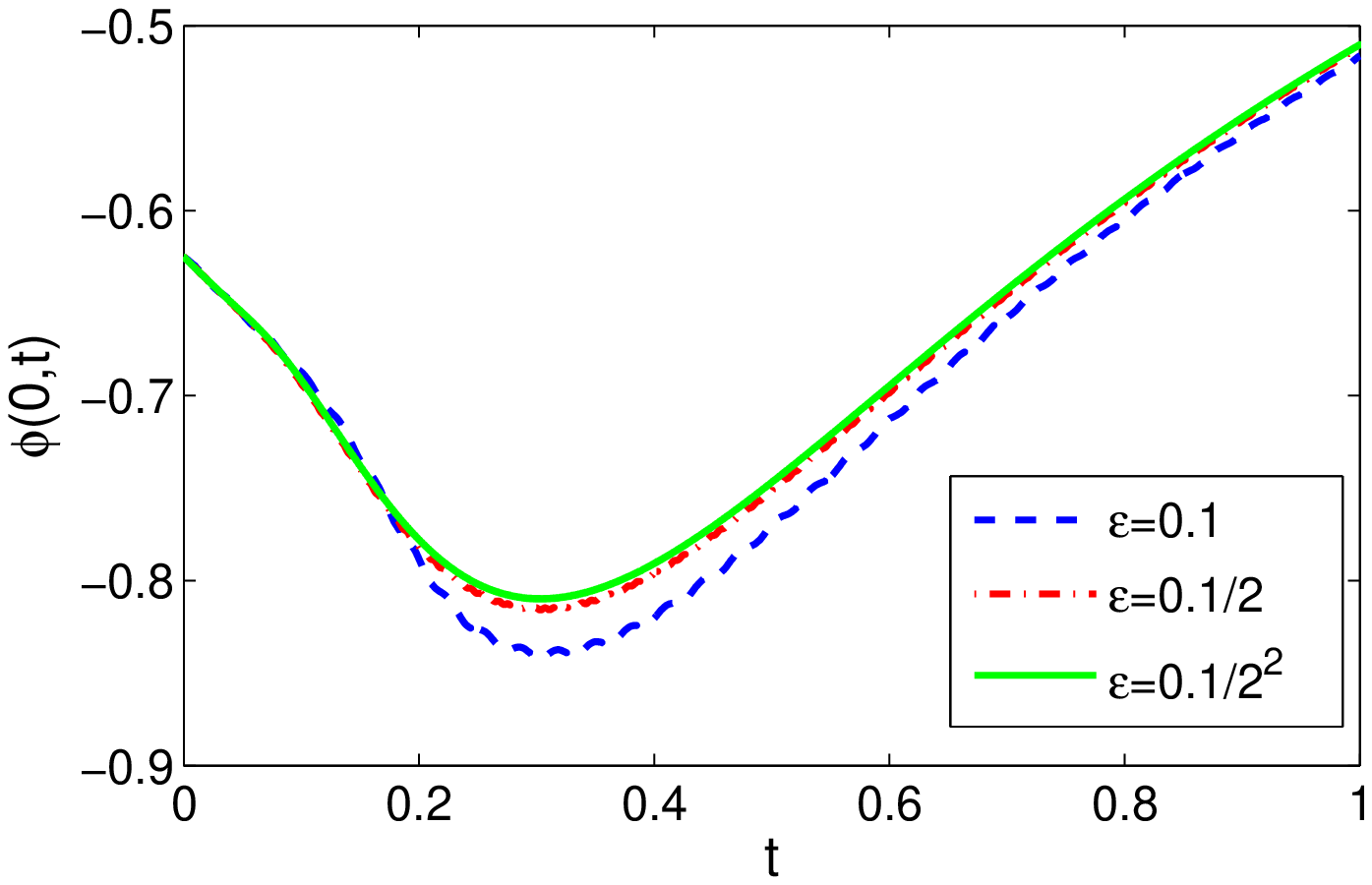}
\end{minipage}
\begin{minipage}[t]{0.5\linewidth}
\centering
\includegraphics[height=4cm,width=6.75cm]{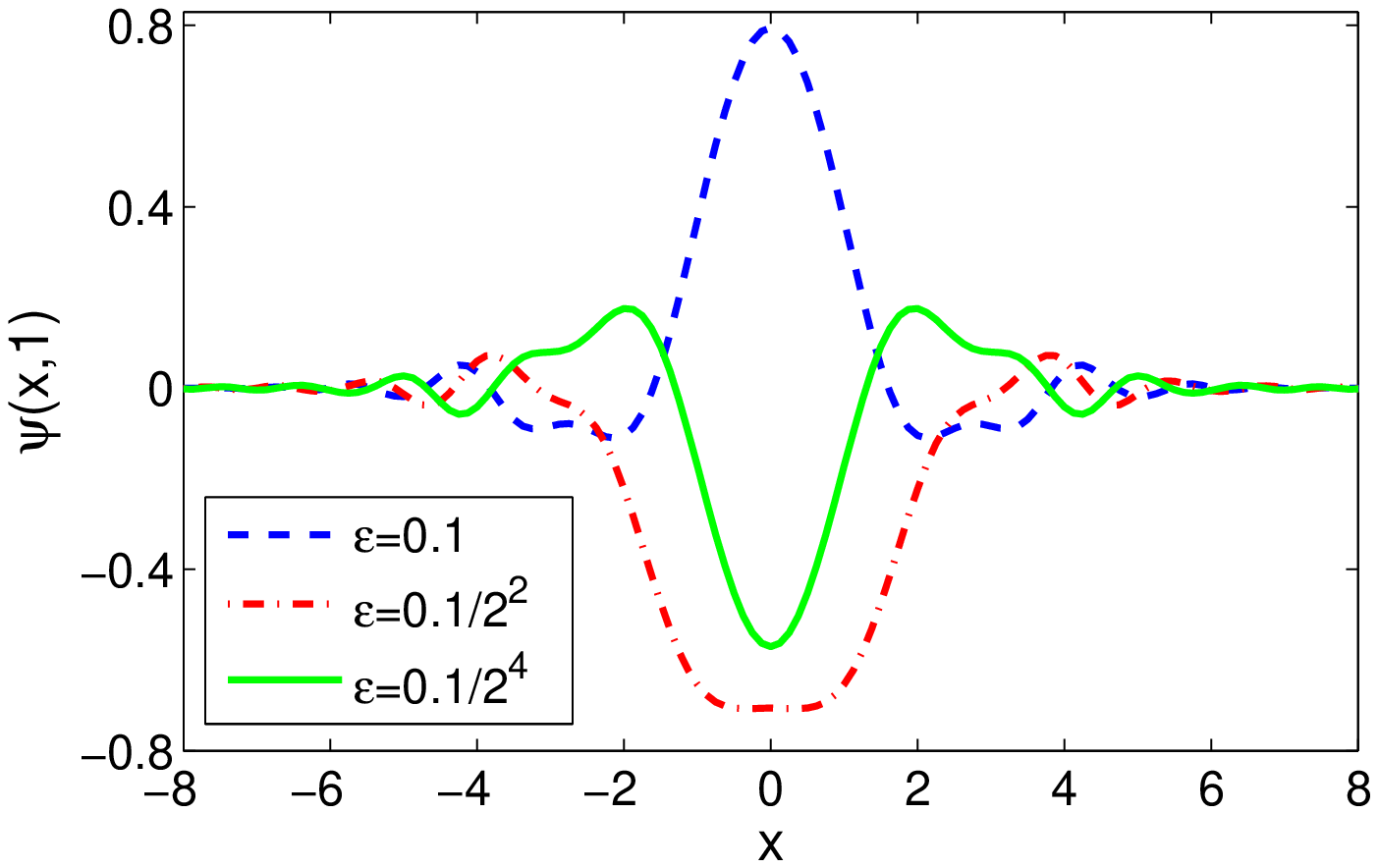}
\end{minipage}%
\hspace{1mm}
\begin{minipage}[t]{0.5\linewidth}
\centering
\includegraphics[height=4cm,width=6.75cm]{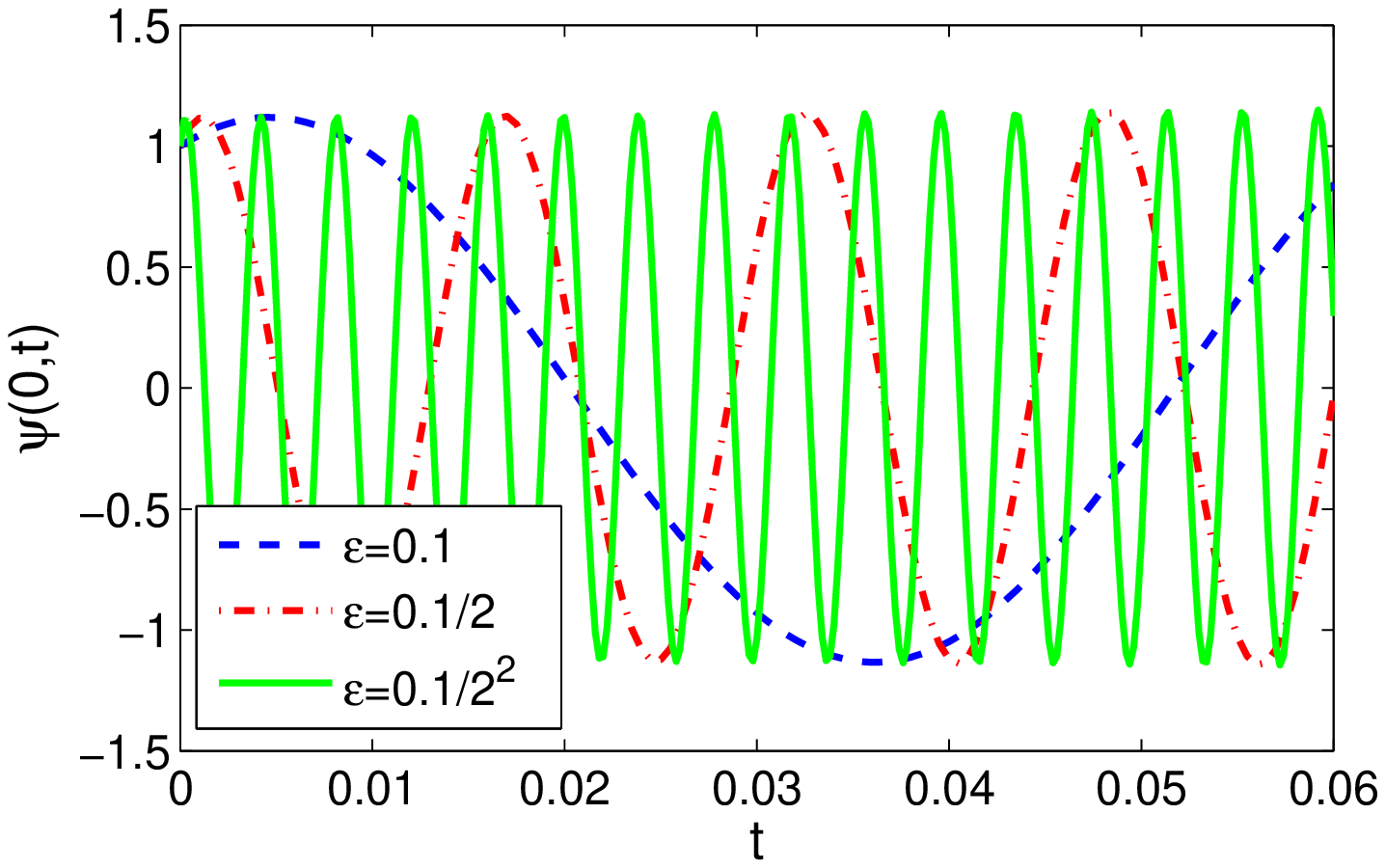}
\end{minipage}
\centerline{Incompatible case:}
\begin{minipage}[t]{0.5\linewidth}
\centering
\includegraphics[height=4cm,width=6.75cm]{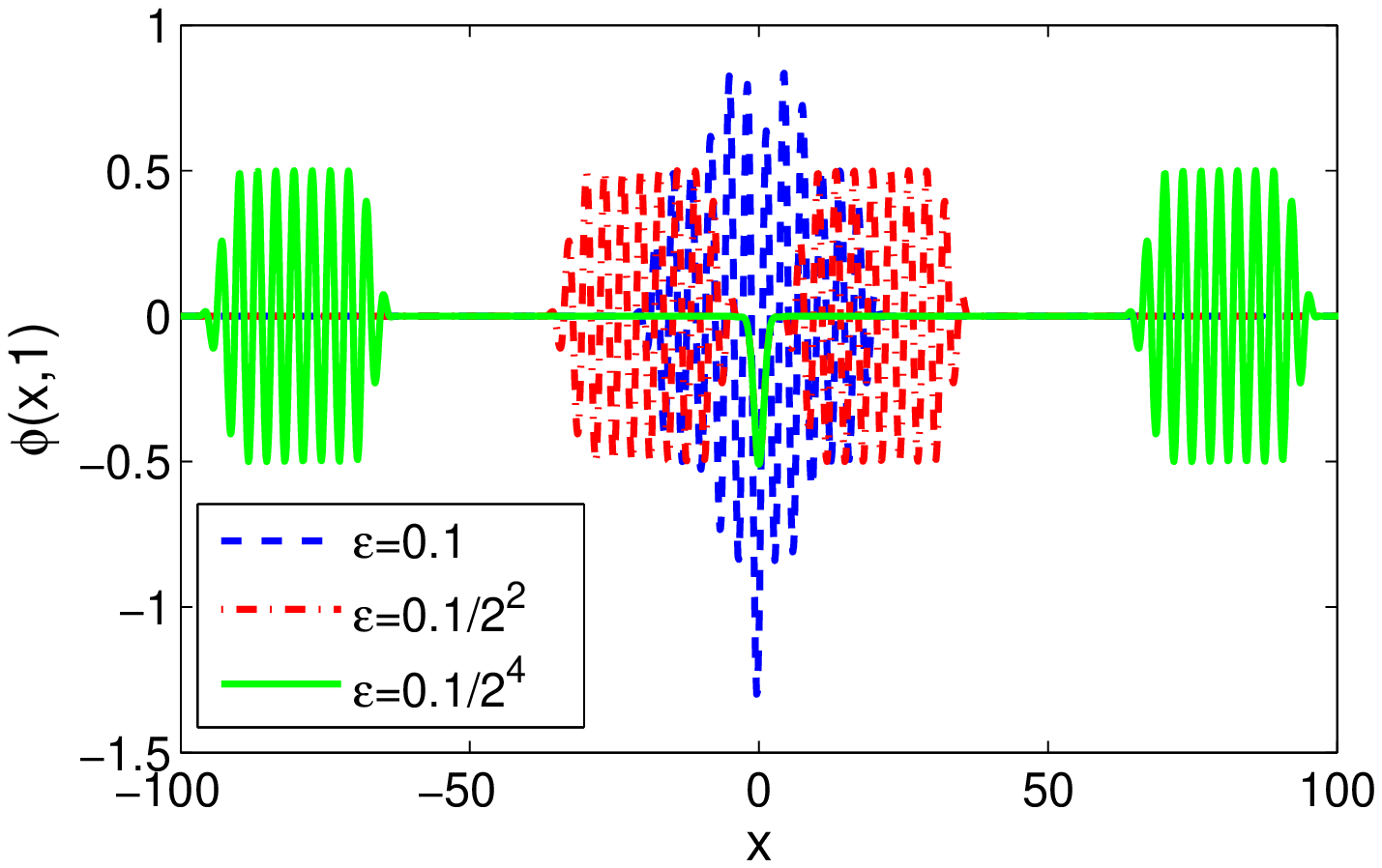}
\end{minipage}%
\hspace{1mm}
\begin{minipage}[t]{0.5\linewidth}
\centering
\includegraphics[height=4cm,width=6.9cm]{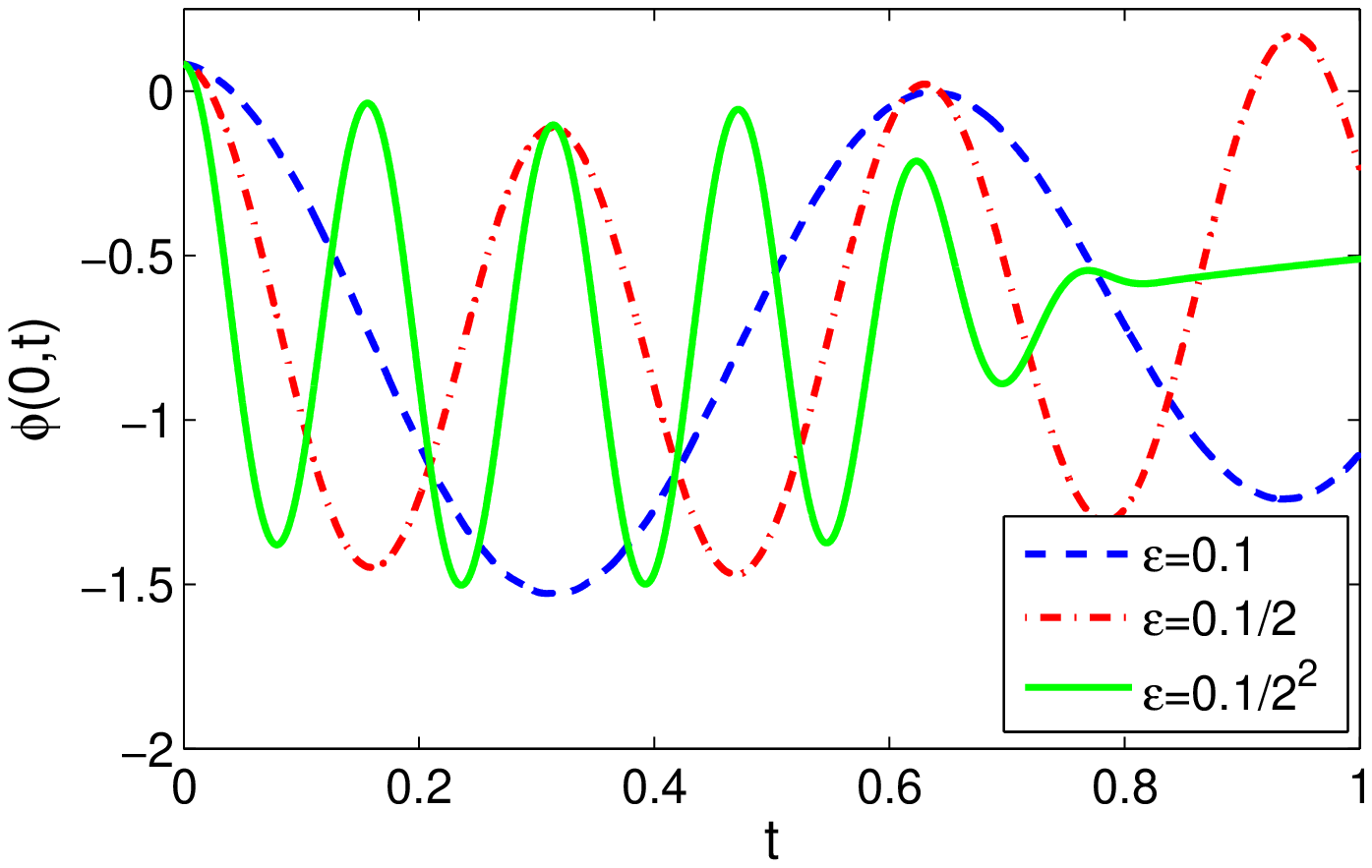}
\end{minipage}
\begin{minipage}[t]{0.5\linewidth}
\centering
\includegraphics[height=4cm,width=6.75cm]{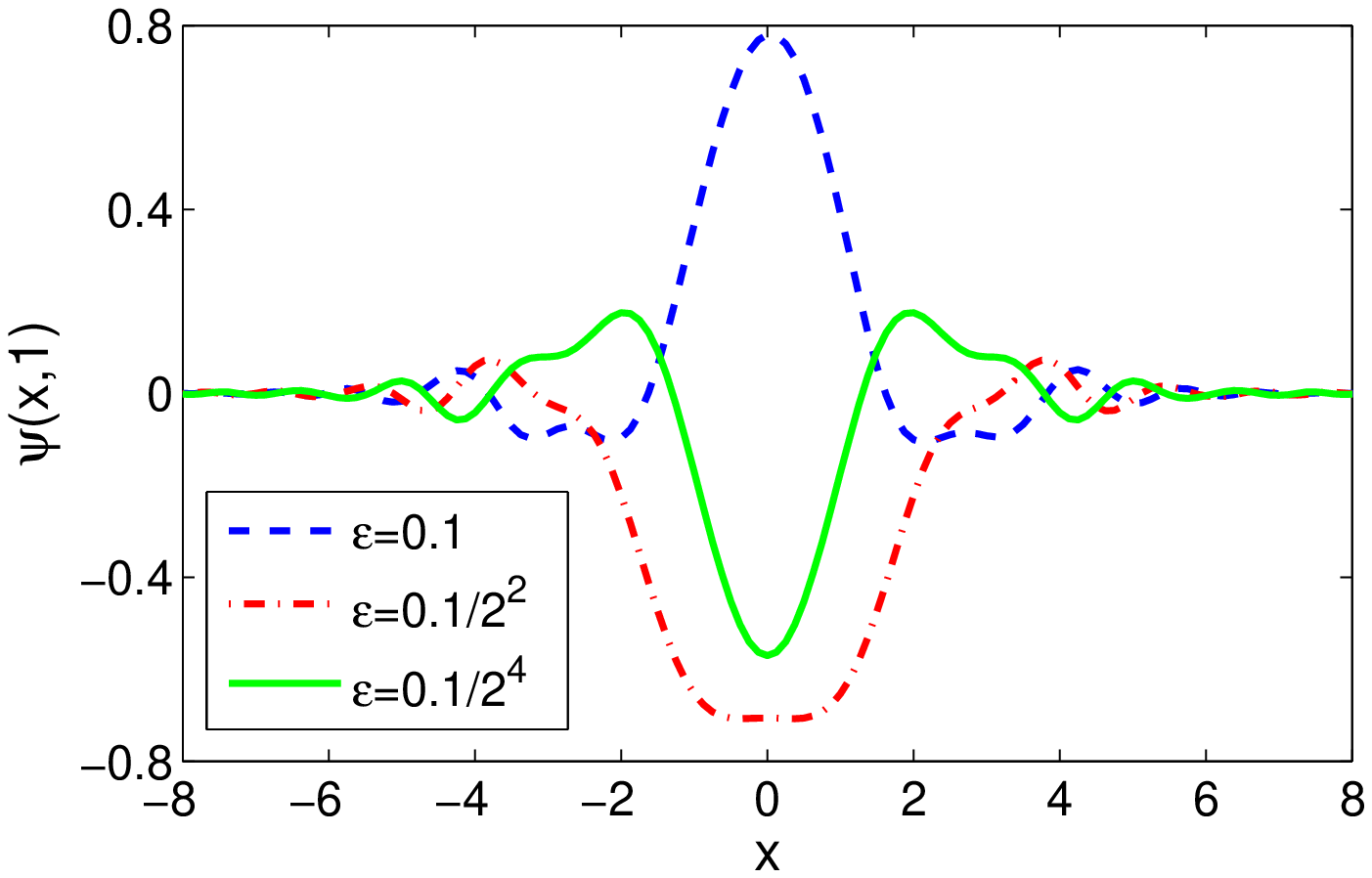}
\end{minipage}%
\hspace{1mm}
\begin{minipage}[t]{0.5\linewidth}
\centering
\includegraphics[height=4cm,width=6.75cm]{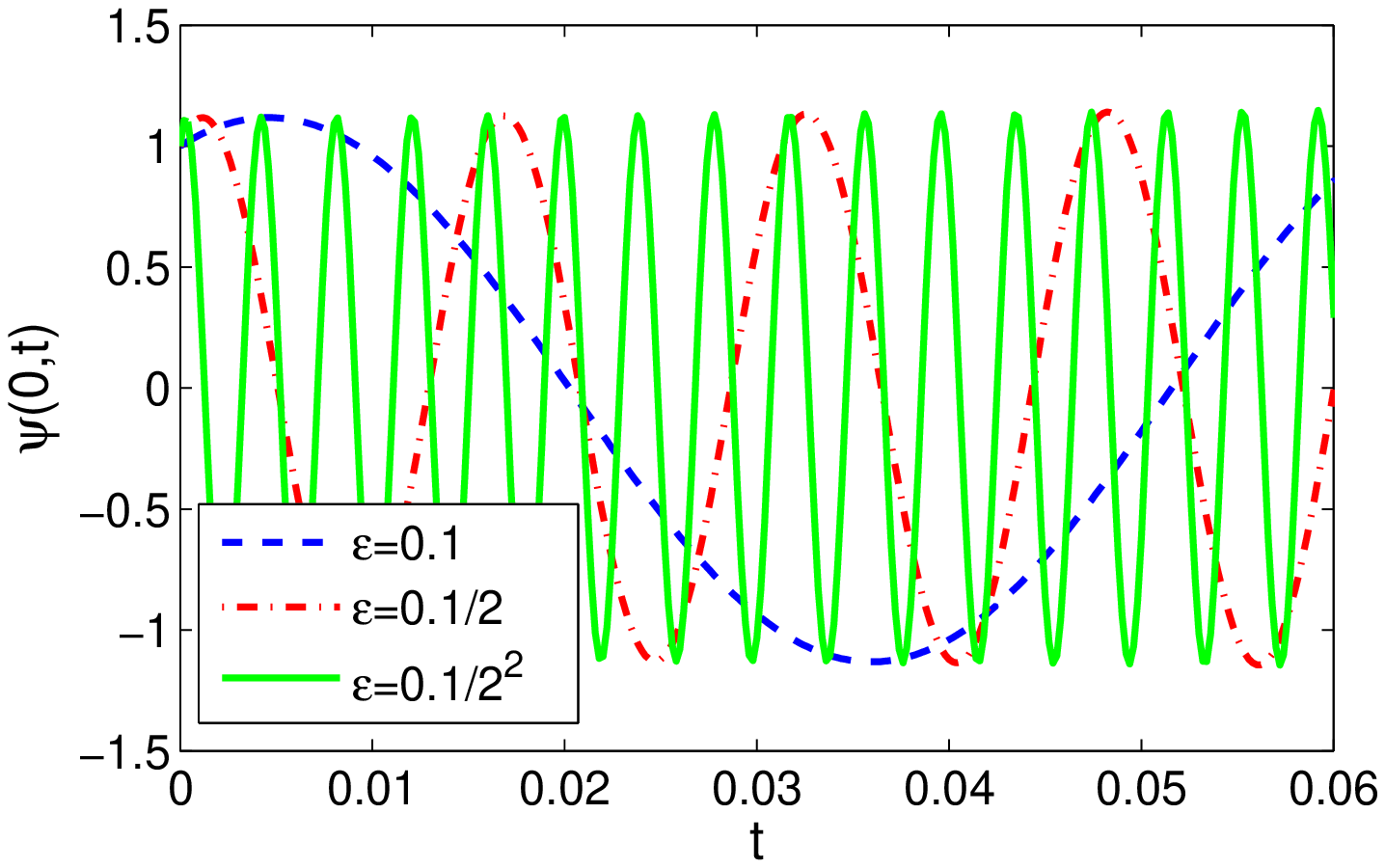}
\end{minipage}
\caption{Solution $\phi(x,1),\,\phi(0,t),\,\psi(x,1),\,\psi(0,t)$ with (\ref{compatiable}) or (\ref{E1}).}\label{fig:incomp}
\end{figure}
It can be seen that when the KGZ system (\ref{KGZ}) starts with initial data that has $O(1)$-incompatibility in the limit regime $\eps <\gm\ll1$, the solution contains both the rapid temporal oscillation and the fast outgoing initial layers of $O(1)$-amplitude.
This complex highly oscillatory behaviour mixes difficulties from high-plasma-frequency limit and subsonic limit, and hence makes the numerical approximation of (\ref{KGZ}) extremely challenging in the regime $\eps<\gm\ll 1$. As has been investigated in \cite{KGZ}, the meshing strategy of the exponential integrator method is $\tau=O(\eps^2)$ in time with $\tau$ denoting the time step.

The aim of this work is to propose an efficient numerical scheme which is uniformly and optimally accurate for solving the KGZ system (\ref{KGZ}) for all $0<\eps<\gm\leq1$ under general (incompatible) initial data. To this purpose, a multiscale decomposition of (\ref{KGZ}) will be derived firstly. For component $\psi$, we adopt the modulated Fourier expansion \cite{Cohen,ICM,APKG,Lubichbook} to explicitly express the oscillations from the high-plasma-frequency limit regime. For the component $\phi$, we use an asymptotic consistent formulation motivated by that of the Zakharov system \cite{ZE2} in the subsonic limit regime, which extracts the initial layer. Based on the decomposed formulation, we propose a multiscale time integrator (MTI) via the time-splitting technique and exponential wave integrators accomplished by Fourier spectral/pseudospectral discretization in space. The proposed MTI scheme is explicit and uniformly accurate with first order convergence rate in time and spectral convergence rate in space for all parameters in the regime $0<\eps<\gm \leq 1$. Extensive numerical evidences are provided to illustrate the accuracy and efficiency of the scheme. Finally, we apply the scheme to study the convergence rates of (\ref{KGZ}) to its limit models when $\eps<\gm \to0^+$.

The rest of the paper is organized as follows.
In Section \ref{sec:2}, we present the multiscale decomposition for the KGZ system.
The uniformly accurate method is derived in Section \ref{sec:3} and numerical results are reported in  Section \ref{sec:4}. Some concluding remarks are drawn in Section \ref{sec:5}.
Throughout the paper, we adopt the standard Sobolev spaces as well as the
corresponding norms \cite{Adams} and denote $A\lesssim B$ to represent that there exists a generic
constant $C>0$ independent of $\eps$, $\gm$, $\tau$ and $h$ such that $|A|\le CB$.

\section{A multiscale decomposition}\label{sec:2}
In this section, we present a multiscale decomposition for the KGZ system (\ref{KGZ}) which is consistent with the limit model (\ref{Sch}) in simultaneous limit regime.

To handle the first equation in the KGZ system, we apply the \emph{modulated Fourier expansion} of $\psi$ in the high-plasma-frequency limit \cite{Cohen,ICM,APKG}:
\begin{equation}\label{mfe}
\psi(\bx,t)=\fe^{it/\eps^2}z(\bx,t)+\fe^{-it/\eps^2}\overline{z}(\bx,t)+r(\bx,t),\quad t\geq0,
\end{equation}
where $z$ is the slow-varying part in terms of $t/\eps^2$ and $r$ denotes the remainder.
Plugging it into (\ref{psi}), we get an equivalent equation as follows:
\begin{align*}
  &\fe^{it/\eps^2}\left[2i\partial_t-\Delta +\phi\right]z+\fe^{-it/\eps^2}\left[-2i\partial_t-\Delta+\phi\right]\overline{z}\\
  &+\eps^2\partial_{tt}r-\Delta r+\frac{r}{\eps^2}+\phi r+\eps^2\fe^{it/\eps^2}\partial_{tt}z+\eps^2\fe^{-it/\eps^2}\partial_{tt}\overline{z}=0.
\end{align*}
Decomposing it into a coupled system for the two unknowns $z$ and $r$, we get
\begin{subequations}
\begin{align}
&2i\partial_tz-\Delta z+\phi z=0,\label{zeq}\\
&\eps^2\partial_{tt}r-\Delta r+\frac{r}{\eps^2}+\phi r+\eps^2\fe^{it/\eps^2}\partial_{tt}z+\eps^2\fe^{-it/\eps^2}\partial_{tt}\overline{z}=0.\label{req}
\end{align}
\end{subequations}
Next, we describe how to set proper initial data for $z$ and $r$.
Based on the expansion and the given initial data, we have
\begin{align*}
&z(\bx,0)+\overline{z}(\bx,0) +r(\bx,0)=\psi_0(\bx),\\
&\frac{i}{\eps^2}\left[z(\bx,0)-\overline{z}(\bx,0)\right]
+\partial_tz(\bx,0)+\partial_t\overline{z}(\bx,0)
+\partial_tr(\bx,0)=\frac{\psi_1(\bx)}{\eps^2}.
\end{align*}
To make it consistent with the limit Schr\"odinger equation (\ref{Sch}), we set the initial data $z(\bx,0)$ the same as that of the limit equation \eqref{Sch}, i.e.,
\begin{equation}z(\bx,0)=\frac{1}{2}\left(\psi_0(\bx)-i\psi_1(\bx)\right)=:z_0(\bx),\end{equation}
which immediately implies that
\[
r(\bx,0)=0,\quad \partial_tr(\bx,0)=-\partial_tz(\bx,0)-\partial_t\overline{z}(\bx,0),
\]
where $\p_t z(\bx,0)$ is given by (\ref{zeq}):
$\p_t z(\bx,0)=-\fl{i}{2}\Delta z_0(\bx)+\fl{i}{2}\phi_0(\bx)z_0(\bx).$

For the density deviation $\phi$, inspired by (\ref{app}) and the asymptotic consistent formulation of the solution of the Zakharov system in the subsonic limit regime \cite{ZE1, ZE2}, we introduce an expansion on $\phi$ as
\be\label{dec-phi}
\phi(\bx,t)=-2|z(\bx,t)|^2+I(\bx,t)+q(\bx,t),\quad t\geq0,
\ee
where
$I(\bx,t)$ represents the fast-outing initial layer caused by the initial incompatibility of the KGZ system, and it is defined by the free wave equation
\be\label{layer}
\left\{\begin{split}
&\gamma^2\partial_{tt} I-\Delta I=0,\\
&I(\bx,0)=\phi_0(\bx)+2|z_0(\bx)|^2,\\
&\partial_t I(\bx,0)=\frac{\phi_1(\bx)}{\gamma}+2\partial_t|z|^2(\bx,0)
=\frac{\phi_1(\bx)}{\gamma}+2\mathrm{Im}\left(\overline{z_0(\bx)}\Delta z_0(\bx)\right),
\end{split}\right.
\ee
where $\mathrm{Im}(f)$ represents the imaginary part of $f$.
Compared to the approximation (\ref{app}), we consider a more detailed decomposition which also involves the second initial layer caused by the initial incompatibility of the time derivative \cite{Ozawa}.
Plugging (\ref{dec-phi}) into (\ref{phi}), we can get the following equation on $q(\bx,t)$:
\[\gamma^2\partial_{tt}q-\Delta q=\Delta (r^2)+2\gamma^2\partial_{tt}|z|^2
+2\mathrm{Re}\left[\fe^{2it/\eps^2}\Delta z^2+2\fe^{it/\eps^2}\Delta (zr)\right].\]

To summarize, by adopting the decomposition (\ref{mfe}) and (\ref{dec-phi}), we equivalently rewrite  the KGZ system (\ref{KGZ}) into the following equations involving the unknowns $z$, $r$ and $q$:
\begin{subequations}\label{KGZ decomp}
\begin{align}
&2i\partial_tz-\Delta z+(-2|z|^2+q+I)z=0,\quad \bx\in\bR^d,\ t>0,\label{eq:z}\\
&\eps^2\partial_{tt}r-\Delta r+\frac{r}{\eps^2}+(-2|z|^2+q+I)r
+\eps^2\fe^{it/\eps^2}\partial_{tt}z+\eps^2\fe^{-it/\eps^2}\partial_{tt}\overline{z}=0,\label{eq:r}\\
&\gamma^2\partial_{tt}q-\Delta q=\Delta (r^2)+2\gamma^2\partial_{tt}|z|^2
+2\mathrm{Re}\left[\fe^{2it/\eps^2}\Delta z^2+2\fe^{it/\eps^2}\Delta (zr)\right],\label{eq:q}
\end{align}
\end{subequations}
with initial data
\be\label{zri}
\begin{aligned}
&z(\bx,0)=\frac{1}{2}[\psi_0(\bx)-i\psi_1(\bx)],\quad q(\bx,0)=0, \quad \partial_t q(\bx,0)=0,\\
&r(\bx,0)=0,\quad \partial_t r(\bx,0)=-\partial_t z(\bx,0)-\partial_t\overline{z}(\bx,0).
\end{aligned}
\ee
Note that the initial layer $I$ is the free wave defined by \eqref{layer}, which can be written explicitly or solved separately and efficiently.

\bigskip

\begin{remark}
We remark that we didn't adopt the multiscale decomposition by frequency from \cite{KG-zhao,KGZ1} for $\psi$, because it would result in a Schr\"odinger equation with a wave operator and a highly oscillatory potential, which is difficult to integrate in a uniformly accurate manner.
\end{remark}

\subsection{Formal estimates}
We give a prior estimate of the decomposition. Firstly, inspired by the oscillation properties of the solution (cf. Fig. \ref{fig:incomp}) and the theoretical results in \cite{KGZ-limit, Masmoudi2008}, we assume that the solution of the KGZ system \eqref{KGZ} and the initial data satisfy:
\begin{equation}
\label{A}\begin{split}
&\|\psi_0\|_{H^{m+6}}+\|\psi_1\|_{H^{m+6}}+\|\phi\|_{L^\infty([0,T]; H^{m+6})}+\gm\|\p_t\phi\|_{L^\infty([0,T]; H^{m+2})}\\
&\quad+\gm^2\|\p_{tt}\phi\|_{L^\infty([0,T]; H^{m})}
\lesssim 1,
\end{split}
\end{equation}
where $0<T<T_{\mathrm{max}}$ with $T_{\mathrm{max}}$ being the maximal common existing time and $m$ is an integer satisfying $m>d/2$ such that the bilinear inequality holds \cite{Adams}
\[\|fg\|_{H^m}\le C_{m,d}\|f\|_{H^m}\|g\|_{H^m}.\]

\begin{proposition}\label{prop}(A prior estimate) Under the assumption (\ref{A}), we have
\begin{align*}
&\|z(t)\|_{H^{m+6}}+\|\p_{t}z(t)\|_{H^{m+4}}+\gm
\|\p_{tt}z(t)\|_{H^{m+2}}+\gm^2\|\p_t^3z(t)\|_{H^{m}}\lesssim 1,\quad t\in [0,T];\\
&\qquad\qquad\|r\|_{L^\infty([0,T];H^m)}\lesssim \eps^2,\quad \|\p_t r\|_{L^\infty([0,T];H^m)}\lesssim 1;\\
&\qquad\qquad\|q\|_{L^\infty([0,T];H^{m-1})}\lesssim \gm,\quad \|\p_t q\|_{L^\infty([0,T];H^{m-2})}\lesssim 1.
\end{align*}
\end{proposition}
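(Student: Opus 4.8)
The plan is to read the decomposed system \eqref{KGZ decomp} as a \emph{cascade of linear problems} and to estimate $z$, then $r$, then $q$, in that order, so that no fixed point or bootstrap is needed. The key observation is that the coefficient $-2|z|^2+q+I$ in \eqref{eq:z}--\eqref{eq:r} is by construction exactly $\phi$, whose norms are furnished by assumption \eqref{A}. Thus \eqref{eq:z} is the linear Schr\"odinger equation $2i\p_t z-\Delta z+\phi z=0$ with given potential $\phi$; \eqref{eq:r} is a linear Klein--Gordon equation for $r$ with potential $\phi$ and a source built from $z$; and \eqref{eq:q} is a linear wave equation for $q$ with a source built from $z$ and $r$. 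For $z$ I would run the standard $H^s$ energy estimate: pairing $\langle\nabla\rangle^{m+6}z$ with its time derivative, the $\Delta$-term is skew-adjoint and drops out, while the potential term gives $\tfrac{d}{dt}\|z\|_{H^{m+6}}^2\lesssim\|\phi\|_{H^{m+6}}\|z\|_{H^{m+6}}^2$ after a bilinear (Kato--Ponce) commutator estimate, so Gr\"onwall with $\|z(0)\|_{H^{m+6}}\lesssim1$ yields $\|z(t)\|_{H^{m+6}}\lesssim1$. Differentiating $\p_t z=-\tfrac i2\Delta z+\tfrac i2\phi z$ then gives $\|\p_t z\|_{H^{m+4}}\lesssim1$; $\p_{tt}z=-\tfrac i2\Delta\p_t z+\tfrac i2(\p_t\phi)z+\tfrac i2\phi\p_t z$ gives $\|\p_{tt}z\|_{H^{m+2}}\lesssim\|\p_t\phi\|_{H^{m+2}}\lesssim1/\gm$; and one more differentiation produces the term $(\p_{tt}\phi)z$ of size $1/\gm^2$, so $\|\p_t^3z\|_{H^m}\lesssim1/\gm^2$. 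The weights $\gm,\gm^2$ in the statement are exactly matched to the weights carried by $\p_t\phi,\p_{tt}\phi$ in \eqref{A}; this step is routine once the product estimates are in hand.

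For $r$ I would use the Klein--Gordon energy $\mathcal E_m:=\eps^2\|\p_t r\|_{H^m}^2+\|\nabla r\|_{H^m}^2+\eps^{-2}\|r\|_{H^m}^2$, which encodes $\|r\|_{H^m}\le\eps\,\mathcal E_m^{1/2}$ and $\|\p_t r\|_{H^m}\le\eps^{-1}\mathcal E_m^{1/2}$. Since $r(0)=0$ and $\|\p_t r(0)\|_{H^m}\lesssim1$ one has $\mathcal E_m(0)\lesssim\eps^2$, so the target $\|r\|_{H^m}\lesssim\eps^2,\ \|\p_t r\|_{H^m}\lesssim1$ is equivalent to propagating $\mathcal E_m(t)\lesssim\eps^2$. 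The potential term contributes a harmless $\int\mathcal E_m$, but the oscillatory source $\eps^2\fe^{\pm it/\eps^2}\p_{tt}z$ must be handled by integrating by parts in time in the energy identity (equivalently via Duhamel with the propagator, which gains an extra $\eps^2$ from the $\eps^{-2}$-mass). A single such integration by parts still leaves a factor $\eps\|\p_{tt}z\|_{H^m}\sim\eps/\gm$, which only yields the weaker $\|r\|_{H^m}\lesssim\eps^2/\gm$. To reach the sharp $\eps^2$ one must exploit that the $O(1/\gm)$ part of $\p_{tt}z$ comes solely from $(\p_t\phi)z$, hence from $(\p_t I)z$, where the initial layer $I$ solves the free wave \eqref{layer} and so oscillates at temporal frequency $\sim1/\gm$; a second integration by parts against this oscillation returns a compensating factor $\gm$, restoring $\eps^2\cdot\gm^{-1}\cdot\gm=\eps^2$.

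For $q$ I would use the subsonic wave representation $\widehat q(t)=\tfrac1{\gm|\xi|}\int_0^t\sin\!\big((t-s)|\xi|/\gm\big)\widehat G(s)\,ds$ with zero initial data. The dangerous source is the $O(1)$ term $2\,\mathrm{Re}(\fe^{2it/\eps^2}\Delta z^2)$: a crude energy bound carrying the factor $1/\gm$ from the wave operator would give only the useless $\|\p_t q\|_{H^{m-2}}\lesssim1/\gm^2$. Because this source oscillates at $2/\eps^2$ while the kernel oscillates at $|\xi|/\gm\ll2/\eps^2$ in the regime $\eps<\gm$, integrating by parts in $s$ against $\fe^{2it/\eps^2}$ gains $\eps^2$ and produces $\|q\|_{H^{m-1}}\lesssim\eps^2/\gm\lesssim\gm$ and $\|\p_t q\|_{H^{m-2}}\lesssim\eps^2/\gm^2\lesssim1$; the $\fe^{it/\eps^2}\Delta(zr)$ term is even smaller and $\Delta(r^2)$ is controlled directly by $\|r\|_{H^m}^2$. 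The remaining source $2\gm^2\p_{tt}|z|^2$ again carries an $O(1/\gm)$ piece through $\p_{tt}z$, whose smallness is recovered exactly as for $r$, by integrating by parts against the $1/\gm$-oscillation of $\p_t I$.

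I expect the main obstacle to be precisely this coupled, two-scale oscillatory bookkeeping in the $r$- and $q$-estimates: the dangerous terms are those in which the factor $\p_t\phi\sim1/\gm$ enters, and their smallness is invisible to a plain energy estimate, surfacing only after integration by parts in time against \emph{both} the $\fe^{\pm it/\eps^2}$ modulation and the $\sim1/\gm$ oscillation of the free-wave layer $I$. Identifying which phase is non-resonant (hence amenable to integration by parts) in each product, and checking in the regime $\eps<\gm\le1$ that every resulting power of $\eps/\gm$ is $\le1$, is where the genuine work lies.
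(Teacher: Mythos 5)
Your overall architecture (bound $z$ by Gr\"onwall treating $-2|z|^2+q+I=\phi$ as a given potential, then $r$ and $q$ by Duhamel with oscillatory integration by parts, with the time-derivative bounds on $z$ read off from the equation) is the same as the paper's, and the $z$-part is fine. But the two places where you yourself locate ``the genuine work'' contain genuine gaps. First, for $r$: your diagnosis that a single integration by parts leaves $\eps^2/\gm$ is based on handling the nearly-resonant piece the wrong way, and your proposed cure --- a second integration by parts against the ``$1/\gm$ temporal oscillation'' of the free wave $I$ --- does not work. The free wave oscillates in time at frequency $|\xi|/\gm$, not $1/\gm$; at low spatial frequencies (e.g.\ the mean mode of $\p_tI(0)=\phi_1/\gm+\cdots$) there is no oscillation at all, so no compensating factor of $\gm$ is available there. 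The paper's fix is different and simpler: in the nearly-resonant integral $\int_0^t \fe^{is(1/\eps^2-\lag\nabla\rag_\eps)}\p_{tt}z\,ds$ one integrates the factor $\p_{tt}z\,ds=d(\p_t z)$ rather than the phase. The boundary terms carry the prefactor $\lag\nabla\rag_\eps^{-1}=O(\eps^2)$ acting on $\p_t z=O(1)$, and the remaining integrand involves the \emph{bounded} operator $\lag\nabla\rag_\eps-\eps^{-2}=-\Delta/(1+\sqrt{1-\eps^2\Delta})$ applied to $\p_t z$; no bound on $\p_{tt}z$ is used for this piece, while the genuinely non-resonant piece closes with $\eps^4/\gm^2\le\eps^2$. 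Relatedly, your claim that $2\gm^2\p_{tt}|z|^2$ ``carries an $O(1/\gm)$ piece'' is false: using the equation, $\p_t|z|^2=\mathrm{Im}(\overline z\Delta z)$, hence $\p_{tt}|z|^2=\mathrm{Im}(\overline{\p_tz}\Delta z+\overline z\Delta\p_tz)=O(1)$ by a structural cancellation, so no oscillatory argument is needed for that source.

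Second, for $q$: your assertion that the kernel frequency $|\xi|/\gm$ is ``$\ll 2/\eps^2$ in the regime $\eps<\gm$'' is only true for bounded $|\xi|$. At spatial frequencies $|\xi|\sim\gm/\eps^2$ the combined phase $2/\eps^2-|\xi|/\gm$ degenerates (exact resonance at $|\xi|=2\gm/\eps^2$), and integration by parts against $\fe^{2is/\eps^2}$ gains nothing there. This is precisely why the paper treats $q_3$ separately, splitting frequency space at $|\xi|=\gm/\eps^2$: below the threshold the phase is bounded away from zero and integration by parts gains $\eps^2|\xi|/\gm$; above it one abandons oscillation entirely and uses the spatial decay $|\wh{f}(\xi)|\le|\xi|^{-4}|\wh{\Delta^2f}(\xi)|$ of the smooth source, paying three extra derivatives of $z^2$ to produce the factor $\eps^6/\gm^4\lesssim\eps$. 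Without this frequency splitting your $q$-estimate does not close, and the regularity bookkeeping (why $z$ must be controlled in $H^{m+3}$ and beyond) is invisible in your sketch.
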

\begin{proof}
We omit the space variable for simplicity of notation. It follows from \eqref{eq:z} and Duhamel's formula that
\[z(t)=\fe^{-\fl{it}{2}\Delta}z(0)+\fl{i}{2}\int_0^t \fe^{-\fl{i}{2}(t-s)\Delta}\left[\phi(s)z(s)\right]ds.\]
Noticing $\fe^{is\Delta}$ preserves $H^k$-norm, by applying the Minkovski's inequality, the bilinear inequality, we get
\begin{align*}
\|z(t)\|_{H^{m+6}}&\le \|z(0)\|_{H^{m+6}}+\fl{1}{2}\int_0^t\|\phi(s)z(s)\|_{H^{m+6}}ds\\
&\le \|z(0)\|_{H^{m+6}}+\fl{C_{m,d}}{2}\|\phi\|_{L^\infty([0,T];H^{m+6})}\int_0^t\|z(s)\|_{H^{m+6}}ds.
\end{align*}
Applying the Gronwall's inequality, we obtain
\[\|z\|_{L^\infty([0,T];H^{m+6})}\le \|z(0)\|_{H^{m+6}}\fe^{T C_{m,d}\|\phi\|_{L^\infty([0,T];H^{m+6})}}\lesssim 1,\]
which concludes the boundedness of $z$ by noticing the definition of $z(0)$ (cf. \eqref{zri}) and the assumption \eqref{A}. For $\p_t z$, it follows from \eqref{eq:z} that
\[\|\p_t z(t)\|_{H^{m+4}}\le \fl{1}{2}\|z( t)\|_{H^{m+6}}+\fl{C_{m,d}}{2}\|\phi(t)\|_{H^{m+4}}\|z(t)\|_{H^{m+4}}\lesssim 1,\]
which directly gives the result.
Similarly, we have
\begin{align*}
&\|\p_{tt}z(t)\|_{H^{m+2}}\le \fl{1}{2}\|\p_tz(t)\|_{H^{m+4}}+\fl{C_{m,d}}{2}\left[\|z(t)\|_{H^{m+2}}\|\p_t\phi(t)\|_{H^{m+2}}
\right.\\
&\qquad\qquad\qquad\quad\left.+\|\p_t z(t)\|_{H^{m+2}}\|\phi(t)\|_{H^{m+2}}\right]\lesssim \fl{1}{\gm},\\
&\|\p_{t}^3z(t)\|_{H^{m}}\le \fl{1}{2}\|\p_{tt}z(t)\|_{H^{m+2}}+\fl{C_{m,d}}{2}\left[\|z(t)\|_{H^{m}}
\|\p_{tt}\phi(t)\|_{H^{m}}+\|\p_{tt} z(t)\|_{H^{m}}\|\phi(t)\|_{H^{m}}
\right.\\
&\qquad\qquad\qquad\left.+2\|\p_t z(t)\|_{H^{m}}
\|\p_{t}\phi(t)\|_{H^{m}}\right]\lesssim \fl{1}{\gm^2},
\end{align*}
by noting the assumption (\ref{A}).

Next, we estimate $r$. Duhamel's formula gives
\be\label{rsol}
\begin{split}
r(t)&=\cos(t\lag \nabla \rag_\eps)r(0)+\fl{\sin(t\lag \nabla\rag_\eps)}{\lag\nabla\rag_\eps}\p_t r(0)\\
&\quad-\int_0^t \fl{\sin((t-s)\lag \nabla \rag_\eps)}{\lag \nabla \rag_\eps}\left[\fl{\phi(s)r(s)}{\eps^2}+\fe^{is/\eps^2}\p_{tt}z(s)+\fe^{-is/\eps^2}
\overline{\p_{tt}z(s)}\right]ds,
\end{split}
\ee
where $\lag \nabla\rag_\eps=\fl{1}{\eps^2}\sqrt{1-\eps^2\Delta}$. Noticing that $r(0)=0$,
(cf. \eqref{zri}), we have
\[
r(t)=\fl{\sin(t\lag \nabla\rag_\eps)}{\lag\nabla\rag_\eps}\p_t r(0)-\int_0^t \fl{\sin((t-s)\lag \nabla \rag_\eps)}{\lag \nabla \rag_\eps}\left(\fl{\phi(s)r(s)}{\eps^2}\right)ds+r_1(t)+r_2(t),\]
where
\begin{align*}
r_1(t)&=-\fl{\mathrm{Im}}{\lag\nabla\rag_\eps}\left[\fe^{it\lag\nabla\rag_\eps}
\int_0^t \fe^{is(1/\eps^2-\lag\nabla\rag_\eps)}\p_{tt}z(s)ds\right],\\
r_2(t)&=\fl{\mathrm{Im}}{\lag\nabla\rag_\eps}\left[\fe^{-it\lag\nabla\rag_\eps}
\int_0^t \fe^{is(1/\eps^2+\lag\nabla\rag_\eps)}\p_{tt}z(s)ds\right].
\end{align*}
Integrating the integrals in $r_1(t)$ and $r_2(t)$ by parts in different ways, we get
\begin{align*}
r_1(t)&=-\fl{\mathrm{Im}}{\lag\nabla\rag_\eps}\left[\fe^{it/\eps^2}\p_t z(t)-\fe^{it\lag\nabla\rag_\eps}\p_t z(0)\right]\\
&\quad-\fl{\lag\nabla\rag_\eps-\fl{1}{\eps^2}}{\lag\nabla\rag_\eps}\mathrm{Re}\left[
\fe^{it\lag\nabla\rag_\eps}
\int_0^t \fe^{is(1/\eps^2-\lag\nabla\rag_\eps)}\p_{t}z(s)ds\right],\\
r_2(t)&=\fl{-\eps^2\mathrm{Re}}{\lag \nabla\rag_\eps(1+\eps^2\lag \nabla\rag_\eps)}\left[\fe^{it/\eps^2}\p_{tt}z(t)-\fe^{-it\lag \nabla \rag_\eps}\p_{tt}z(0)\right.\\
&\qquad\qquad\qquad\qquad\quad\,\,\,\left.-\fe^{-it\lag \nabla\rag_\eps} \int_0^t \fe^{is(1/\eps^2+\lag\nabla\rag_\eps)}\p_t^3 z(s)ds\right].
\end{align*}
Noticing for any $s\in\bR$, $k\ge 0$,
\[\Big\|\fl{u}{\lag\nabla\rag_\eps}\Big\|_{H^k}\le \eps^2\|u\|_{H^k},\quad
\|\sin(s\lag \nabla\rag_\eps)u\|_{H^k}\le \|u\|_{H^k},\quad \|\cos(s\lag \nabla\rag_\eps)u\|_{H^k}\le \|u\|_{H^k},\]
and
\[\|\fe^{is\lag\nabla\rag_\eps}u\|_{H^k}=\|u\|_{H^k},\quad\lag\nabla\rag_\eps-\fl{1}{\eps^2}=\fl{-\Delta}{1+\sqrt{1-\eps^2\Delta}},
\quad \Big\|\big(\lag\nabla\rag_\eps-\fl{1}{\eps^2}\big)u\Big\|_{H^k}\le \|u\|_{H^{k+2}},\]
which immediately yields that
\begin{align*}
\|r_1(t)\|_{H^m}&\le 2\eps^2\|\p_t z\|_{L^\infty([0,T];H^m)}+\eps^2 T
\|\p_t z\|_{L^\infty([0,T];H^{m+2})}\lesssim \eps^2,\\
\|r_2(t)\|_{H^m}&\le \eps^4\left[2\|\p_{tt}z\|_{L^\infty([0,T];H^m)}+
T\|\p_t^3z\|_{L^\infty([0,T];H^m)}\right]\lesssim \fl{\eps^4}{\gm^2}\lesssim\eps^2.
\end{align*}
We derive that
\begin{align*}
\|r(t)\|_{H^m}&\le \eps^2\|\p_t r(0)\|_{H^m}+\|r_1(t)\|_{H^m}+
\|r_2(t)\|_{H^m}\\
&\quad+C_{m,d}\|\phi\|_{L^\infty([0,T];H^m)}\int_0^t \|r(s)\|_{H^m}ds,
\end{align*}
which implies that
\begin{align*}
\|r\|_{L^\infty([0,T];H^m)}&\le \fe^{TC_{m,d}\|\phi\|_{L^\infty([0,T];H^m)}}\left[\eps^2\|\p_t r(0)
\|_{H^m}+\|r_1(t)\|_{L^\infty([0,T];H^m)}\right.\\
&\qquad \qquad\qquad\qquad\qquad\,\,\left.+\|r_2(t)\|_{L^\infty([0,T];H^m)}\right]\lesssim \eps^2.
\end{align*}
Differentiating \eqref{rsol} with respect to $t$, we get
\begin{align*}
\p_t r(t)&=\cos(t\lag \nabla\rag_\eps)\p_t r(0)-\fl{1}{\eps^2}
\int_0^t \cos((t-s)\lag \nabla \rag_\eps)\left(\phi(s)r(s)\right)ds+\p_t r_1(t)+\p_t r_2(t),
\end{align*}
with
\begin{align*}
\p_t r_1(t)&=\mathrm{Re}\left(\fe^{it\lag\nabla\rag_\eps}\p_t z(0)
-\fe^{it/\eps^2}\p_t z(t)\right)-
\fl{1}{\lag \nabla\rag_\eps}\mathrm{Im}\left(\fe^{it/\eps^2}\p_{tt} z(t)\right)\\
&\quad+\big(\lag\nabla\rag_\eps-\fl{1}{\eps^2}\big)\mathrm{Im}\Big(\fe^{it\lag\nabla\rag_\eps}
\int_0^t \fe^{is(1/\eps^2-\lag\nabla\rag_\eps)}\p_{t}z(s)ds\Big),\\
\p_t r_2(t)&=\fl{\mathrm{Im}\left[\fe^{it/\eps^2}\p_{tt}z(t)+\eps^2\lag \nabla \rag_\eps
\fe^{-it\lag \nabla \rag_\eps}\big(\p_{tt}z(0)+\int_0^t \fe^{is(1/\eps^2+\lag\nabla\rag_\eps)}\p_t^3 z(s)ds\big)\right]}{\lag \nabla\rag_\eps(1+\eps^2\lag \nabla\rag_\eps)}.
\end{align*}
Thus
\begin{align*}
\|\p_t r(t)\|_{H^m}&\le \|\p_t r(0)\|_{H^m}+\fl{C_{m,d}}{\eps^2}\int_0^t
\|\phi(s)\|_{H^m}\|r(s)\|_{H^m}ds+2\|\p_t z\|_{L^\infty([0,T];H^m)}\\
&+3\eps^2
\|\p_{tt}z\|_{L^\infty([0,T];H^m)}+T\|\p_t z\|_{L^\infty([0,T];H^{m+2})}+T\eps^2
\|\p_t^3z\|_{L^\infty([0,T];H^m)}\\
&\lesssim 1+\fl{\eps^2}{\gm^2}\lesssim 1,
\end{align*}
which completes the proof for the property of $r(t)$.

For $q(t)$, it follows from \eqref{eq:q} that
\begin{align}
q(t)&=2\gm\int_0^t \fl{\sin(\fl{t-s}{\gm}|\nabla|)}{|\nabla|}[\p_{tt}|z|^2(s)]ds\nn\\
&\quad-\fl{|\nabla|}{\gm}\int_0^t \sin\left(\fl{t-s}{\gm}|\nabla|\right)\left[
r^2(s)+2\mathrm{Re}\left(\fe^{2is/\eps^2} z^2(s)+2\fe^{is/\eps^2}z(s)r(s)
\right)\right]ds,\label{qsol}
\end{align}
where $|\nabla|=\sqrt{-\Delta}$. From \eqref{eq:z}, we get
\[\p_t |z|^2=\mathrm{Im}(\overline{z}\Delta z),\quad \p_{tt}|z|^2=\mathrm{Im}(\overline{\p_t z}\Delta z+ \overline{z}\Delta \p_t z),\]
which implies that
\be\label{absz}
\left\|\p_{tt}|z|^2\right\|_{H^m}\lesssim \|\p_t z\|_{H^m}\|z\|_{H^{m+2}}+\|z\|_{H^m}\|\p_t z\|_{H^{m+2}}\lesssim 1.
\ee
Thus
\begin{align}
\|q(t)\|_{H^{m-1}}&\lesssim \gm \|\p_{tt}|z|^2\|_{H^{m-1}}+\|q_1(t)\|_{H^{m-1}}\nn\\
&\quad+\fl{1}{\gm}\left[\|r\|_{L^\infty([0,T];H^{m})}^2+
\|r\|_{L^\infty([0,T];H^{m})}\|z\|_{L^\infty([0,T];H^{m})}\right]\nn\\
&\lesssim \gm+\|q_1(t)\|_{H^{m-1}},\label{q1e}
\end{align}
where
\begin{align*}
q_1(t)&=-\fl{2|\nabla|}{\gm}\mathrm{Re}\left[\int_0^t\sin(\fl{t-s}{\gm}|\nabla|)\left(\fe^{2is/\eps^2} z^2(s)\right)ds\right]=q_2(t)+q_3(t),
\end{align*}
with
\begin{align*}
q_2(t)&=\fl{|\nabla|}{\gm}\mathrm{Im}\left[\fe^{-it|\nabla|/\gm}\int_0^t\fe^{is(2/\eps^2+|\nabla|/\gm)}z^2(s)ds\right],\\
q_3(t)&=-\fl{|\nabla|}{\gm}\mathrm{Im}\left[\fe^{it|\nabla|/\gm}\int_0^t\fe^{is(2/\eps^2-|\nabla|/\gm)}z^2(s)ds\right].
\end{align*}
Integrating $q_2(t)$ by parts, we get
\[q_2(t)=\fl{-\fl{\eps^2}{\gm}|\nabla|}{2+\fl{\eps^2}{\gm}|\nabla|}\mathrm{Re}\Big[\fe^{2it/\eps^2}z^2(t)-
\fe^{-it|\nabla|/\gm}
\big(z^2(0)+2\int_0^t \fe^{is(2/\eps^2+|\nabla|/\gm)}z(s)\p_t z(s)ds\big)\Big],\]
which implies that
\be\label{q2e}
\|q_2(t)\|_{H^{m-1}}\lesssim \fl{\eps^2}{\gm}\left[\|z\|_{L^\infty([0,T];H^m)}^2+
\|z\|_{L^\infty([0,T];H^m)}\|\p_t z\|_{L^\infty([0,T];H^m)}\right]\lesssim \eps.
\ee
For $q_3(t)$, we need to make a more careful investigation since it could involve a resonance.
Taking Fourier transform of $q_3$, we obtain
\begin{align*}
\wh{q_3(t)}(\xi)&=\fl{i|\xi|}{2\gm}\left[\fe^{it|\xi|/\gm}\int_0^t\fe^{is(2/\eps^2-
|\xi|/\gm)}\wh{z^2(s)}(\xi)ds\right.\\
&\qquad\qquad\left.-\fe^{-it|\xi|/\gm}\int_0^t\fe^{is(
|\xi|/\gm-2/\eps^2)}\wh{\overline{z^2(s)}}(\xi)ds\right].
\end{align*}
For $|\xi|\le \gm/\eps^2$, integrating by parts, we get
\begin{align*}
\wh{q_3(t)}(\xi)&=\fl{\fl{\eps^2}{\gm}|\xi|}{4-\fl{2\eps^2}{\gm}|\xi|}\left[
\fe^{\fl{2it}{\eps^2}}\wh{z^2(t)}(\xi)-\fe^{\fl{it|\xi|}{\gm}}\Big(\wh{z^2(0)}(\xi)+2\int_0^t
\fe^{is(\fl{2}{\eps^2}-\fl{|\xi|}{\gm})}\wh{z(s)\p_t z(s)}(\xi)ds\Big)\right.\\
&\quad\left.+\fe^{-2it/\eps^2}\wh{\overline{z^2(t)}}(\xi)-\fe^{-\fl{it|\xi|}{\gm}}
\Big(\wh{\overline{z^2(0)}}(\xi)+2\int_0^t
\fe^{is(|\xi|/\gm-2/\eps^2)}\wh{\overline{z(s)\p_t z(s)}}(\xi)ds\Big)\right],
\end{align*}
which implies that
\begin{align*}
\left|\wh{q_3(t)}(\xi)\right|&\le \fl{\eps^2}{2\gm}|\xi|\left[\left|\wh{z^2(t)}(\xi)\right|+\left|\wh{z^2(0)}(\xi)\right|+
\left|\wh{\overline{z^2(t)}}(\xi)\right|
+\left|\wh{\overline{z^2(0)}}(\xi)\right|\right.\\
&\qquad\qquad\,\,\left.+2\int_0^t \left(\left|\wh{z(s)\p_t z(s)}(\xi)\right|+\left|\wh{\overline{z(s)\p_t z(s)}}(\xi)\right|\right)ds\right],\quad |\xi|\leq \gm/\eps^2.
\end{align*}
For $|\xi|>\gm/\eps^2$, noticing that $\wh{\fl{\p^kf}{\p x_j^k}}(\xi)=(i\xi_j)^k\wh{f}(\xi)$ for $k\in \mathbb{N}$, which implies that
\[\wh{f}(\xi)=-\fl{1}{|\xi|^2}\wh{(\Delta f)}(\xi)=\fl{1}{|\xi|^4}\wh{(\Delta^2 f)}(\xi).\]
Hence for $|\xi|>\gm/\eps^2$, we have
\begin{align*}
\left|\wh{q_3(t)}(\xi)\right|&\le \fl{1}{2\gm |\xi|^3}\left[\int_0^t\left|\wh{\Delta^2 (z^2(s))}(\xi)\right|ds
+\int_0^t\left|\wh{\Delta^2\overline{z^2(s)}}(\xi)\right|ds\right]\\
&\lesssim \fl{\eps^6}{\gm^4}\left[\int_0^t\left|\wh{\Delta^2 (z^2(s))}(\xi)\right|ds
+\int_0^t\left|\wh{\Delta^2\overline{z^2(s)}}(\xi)\right|ds\right].
\end{align*}
Combining the estimates above, we get
\begin{align*}
\|q_3(t)\|_{H^{m-1}}&\lesssim \left\|(1+|\xi|)^{m-1}\wh{q_3(t)}(\xi)\right\|_{L^2}\\
&\lesssim \fl{\eps^2}{\gm}\left[\|z^2(t)\|_{H^{m}}+\|z^2(0)\|_{H^{m}}+
\|z\|_{L^\infty([0,T];H^{m})}\|\p_tz\|_{L^\infty([0,T];H^{m})}\right]\\
&\quad +\fl{\eps^6}{\gm^4}\|z^2\|_{L^\infty([0,T];H^{m+3})}\lesssim \eps,
\end{align*}
which together with \eqref{q1e} and \eqref{q2e} concludes the estimate.

Finally, we give the estimate for $\p_t q$. Differentiating \eqref{qsol} with respect to $t$ and integrating by parts for the term involving $z^2(s)$, we get
\begin{align*}
\p_t q(t)&=2\int_0^t\cos\left(\fl{t-s}{\gm}|\nabla|\right)\left[\p_{tt}|z|^2(s)\right]ds\\
&\quad+\fl{\Delta}{\gm^2}\int_0^t \cos\left(\fl{t-s}{\gm}|\nabla|\right)\left[
r^2(s)+2\mathrm{Re}\left(\fe^{2is/\eps^2} z^2(s)+2\fe^{is/\eps^2}z(s)r(s)
\right)\right]ds\label{q'sol}\\
&\hspace{-1mm}=\int_0^t\cos(\fl{t-s}{\gm}|\nabla|)\left[2\p_{tt}|z|^2(s)+\fl{\Delta}{\gm^2}\left(
r^2(s)+
4\mathrm{Re}\left[\fe^{is/\eps^2}z(s)r(s)\right]\right)\right]ds+q_4(t)\\
&\quad+\fl{\fl{\eps^2}{\gm^2}\Delta}{2+\fl{\eps^2}{\gm}|\nabla|}\mathrm{Im}\Big[\fe^{2it/\eps^2}z^2(t)-
\fe^{-\fl{it|\nabla|}{\gm}}
\big(z^2(0)+2\int_0^t \fe^{is(\fl{2}{\eps^2}+\fl{|\nabla|}{\gm})}z(s)\p_t z(s)ds\big)\Big],
\end{align*}
where
\[
q_4(t)=\fl{\Delta}{\gm^2}\mathrm{Re}\left[\fe^{it|\nabla|/\gm}
\int_0^t\fe^{is(2/\eps^2-|\nabla|/\gm)}z^2(s)ds\right].\]
Applying similar arguments as above, we get
\begin{align*}
\|q_4(t)\|_{H^{m-2}}&\lesssim \fl{\eps^2}{\gm^2}\|z\|_{L^\infty([0,T];H^{m})}
\left(\|z\|_{L^\infty([0,T];H^{m})}+\|\p_tz\|_{L^\infty([0,T];H^{m})}\right)\\
&\quad+\fl{\eps^4}{\gm^4}\|z\|^2_{L^\infty([0,T];H^{m+2})}\lesssim 1.
\end{align*}
Thus
\begin{align*}
\|\p_t q(t)\|_{H^{m-2}}&\lesssim \|\p_{tt}|z|^2\|_{H^{m-2}}+\|q_4(t)\|_{H^{m-2}} +\fl{1}{\gm^2}\|r\|^2_{L^\infty([0,T]; H^{m})}\\
&\quad+\fl{1}{\gm^2}\|r\|_{L^\infty([0,T]; H^{m})}\|z\|_{L^\infty([0,T]; H^{m})}
+\fl{\eps^2}{\gm^2}\|z\|^2_{L^\infty([0,T];H^{m})}\\
&\quad+\fl{\eps^2}{\gm^2}\|z\|_{L^\infty([0,T];H^{m})}
\|\p_tz\|_{L^\infty([0,T];H^{m})}\lesssim 1,
\end{align*}
which completes the proof.
\end{proof}

\subsection{Limit model}
To end  this section, we discuss about the limit models for the KGZ system \eqref{KGZ} in the simultaneous limit regime.

Alternative to the limit model (\ref{Sch}), we present a semi-limit model by the formal estimate results.
Based on the expansion (\ref{mfe}) and (\ref{dec-phi}) and the estimates $\|r\|_{H^{m}}\lesssim \eps^2$ and $\|q\|_{H^{m-1}}\lesssim \gm$ from Proposition \ref{prop}, we formally see that
\begin{equation}\label{app2}
\psi\to\fe^{it/\eps^2}z_{\rm op}+\fe^{-it/\eps^2}\overline{z_{\rm op}},\quad
\phi\to-2|z_{\rm op}|^2+I,\quad \eps<\gm\rightarrow 0^+,
\end{equation}
where by (\ref{KGZ decomp}) $z_{\rm op}:=z_{\rm op}(\bx,t)$ satisfies the following nonlinear Schr\"{o}dinger equation with highly oscillatory potential \cite{NLSOP}
\be\label{Sch2}
\left\{\begin{split}
&2i\partial_t z_{\rm op}(\bx,t)-\Delta z_{\rm op}(\bx,t)+(-2|z_{\rm op}(\bx,t)|^2+I(\bx,t))z_{\rm op}(\bx,t)=0,\quad  t>0,\\
&z_{\rm op}(\bx,0)=z_0(\bx),\quad \bx\in\bR^d,
\end{split}\right.
\ee
and $I(\bx,t)$ is the potential given by the free wave equation (\ref{layer}).

Since the free wave $I(\bx,t)$ quickly travels to far field when $\gm\to0$, its effect on  $z_{\rm op}$ in \eqref{Sch2} vanishes. Therefore, (\ref{Sch2}) can be further reduced to the limit model (\ref{Sch}), which has been rigorously proved in \cite{KGZ-limit}.
Compared to \eqref{Sch}, the semi-limit model \eqref{Sch2} incorporates the impact from the oscillatory potential $I$ to $\psi$ and takes the second initial layer into consideration, which should be more accurate. In Section \ref{sec:4}, we will investigate numerically the convergence rate of the KGZ system (\ref{KGZ}) to the limit models \eqref{Sch} and \eqref{Sch2}.

\section{A uniformly accurate method}\label{sec:3}
In this section, we are going to propose a uniformly accurate (UA) scheme based on (\ref{KGZ decomp}) for solving the KGZ system \eqref{KGZ}. To do so, we consider the one-dimensional case for simplicity of notation, i.e., $d=1$, $\bx=x$ in (\ref{KGZ}), and extensions to high dimensions are straightforward. We truncate the whole space problem (\ref{KGZ}) with $x\in\bR$ onto a bounded interval $x\in\Omega=[-L,L]$ with periodic boundary conditions. The periodic setup has been widely considered for the numerical studies of wave or dispersive type models in the literature \cite{KG-zhao,KGZ,ZE1,Kath2019,UAKG,NUA,APKG,XZ}. Consequently, the decomposed system (\ref{KGZ decomp}) is truncated consistently to
\begin{subequations}\label{KGZ 1d}
\begin{align}
&2i\partial_tz-\partial_{xx} z+(-2|z|^2+q+I)z=0,\quad -L<x<L,\ t>0,\\
&\eps^2\partial_{tt}r-\partial_{xx} r+\frac{r}{\eps^2}+(-2|z|^2+q+I)r
+\eps^2\fe^{it/\eps^2}\partial_{tt}z+\eps^2\fe^{-it/\eps^2}\partial_{tt}\overline{z}=0,\\
&\gamma^2\partial_{tt} I-\partial_{xx} I=0,\\
&\gamma^2\partial_{tt}q-\partial_{xx} q=\partial_{xx} r^2+2\gamma^2\partial_{tt}|z|^2
+2\mathrm{Re}\left[\fe^{2it/\eps^2}\partial_{xx} z^2+2\fe^{it/\eps^2}\partial_{xx} (zr)\right],
\end{align}\end{subequations}
with initial and boundary values
\begin{equation*}
\left\{
\begin{aligned}
&z(x,0)=z_0=\frac{1}{2}[\psi_0-i\psi_1],\quad r(x,0)=q(x,0)=0,\quad I(x,0)=\phi_0+2|z_0|^2;\\
& \partial_tr(x,0)=-2\mathrm{Re}(\partial_tz(x,0)),\quad
\partial_t I(x,0)=\frac{\phi_1}{\gamma}+2\mathrm{Im}(\overline{z_0}\p_{xx} z_0),\quad \partial_t q(x,0)=0;\\
&z(-L,t)=z(L,t),\  r(-L,t)=r(L,t),\  I(-L,t)=I(L,t),\  q(-L,t)=q(L,t),\ t\geq0.
\end{aligned}\right.
\end{equation*}
We shall derive the scheme and meanwhile provide some clues on the UA property of the truncation error.

First of all, we denote $\tau=\Delta t>0$ as the time step for discretizing the time direction and denote $t_n=n\tau,\,n=0,1\ldots$.
For the part $I(\bx,t)$, obviously we have the exact solution from the free wave equation (\ref{layer}), i.e.,
\begin{equation}
  I(x,t)=\sum\limits_{l\in\bZ}\wh{I}_l(t)e^{i\mu_l(x+L)},\quad \widehat{I}_l(t)=\cos(\theta_l t)\widehat{I}_l(0)+\fl{\sin(\theta_l t)}{\theta_l} \widehat{I}_l'(0),\quad t\geq0,
\end{equation}
where $\mu_l=\frac{\pi l}{L}$, $\theta_l=\fl{\mu_l}{\gm}$.

{\bf{Splitting scheme for $z$.}} To obtain $z(x,t)$, we split the equation for $z$ into two subflows:
$$\Phi_k^t:\ 2i\partial_tz-\partial_{xx} z=0\quad\mbox{and}\quad
\Phi_p^t:\ 2i\partial_tz+(-2|z|^2+q+I)z=0.$$
For some $n\geq0$, we apply the Lie-Trotter splitting scheme to get $z(x,t_{n+1})$ as
\begin{equation}\label{z1}
z(x,t_{n+1})\approx \Phi_k^\tau\circ\Phi_p^\tau(z(x,t_n)).
\end{equation}
Note the Lie-Trotter splitting has been identified to offer uniform first order accuracy for integrating a nonlinear Schr\"{o}dinger equation with highly oscillatory potential \cite{NLSOP}.
The flow $\Phi_k^\tau$ can be integrated exactly in the Fourier space. As for $\Phi_p^\tau$, we have
$$\Phi_p^\tau(z(x,t_n))=z(x,t_n)\fe^{\frac{i}{2}\int_0^\tau [-2|z(x,t_n+s)|^2+q(x,t_n+s)+I(x,t_n+s)]ds}.$$
Note that in $\Phi_p^\tau$, $|z(x,t_n+s)|\equiv|z(x,t_n)|$ for $0\leq s\leq\tau$ and
$$J^n(x):=\int_0^\tau I(x,t_n+s)ds=\sum\limits_{l\in\bZ}
\left[\frac{\sin(\theta_l\tau)}{\theta_l}\widehat{I}_l(t_n)+\frac{1-\cos(\theta_l\tau)}{\theta_l^2} \widehat{I}_l'(t_n)\right]e^{i\mu_l(x+L)}.$$
We just approximate $q(x,t_n+s)\approx q(x,t_n)$ to get
\begin{equation}\label{z2}\Phi_p^\tau(z(x,t_n))\approx z(x,t_n)\fe^{\frac{i}{2}\left[-2\tau|z(x,t_n)|^2+\tau q(x,t_n)+J^n(x)\right]}.
\end{equation}
Note that the truncation error here is $O(\tau^2)$, which is uniform for $0<\eps<\gm\leq1$ since $\partial_tq=O(1)$.

{\bf{Exponential integrator for $r$.}} To obtain $r(x,t)$, we firstly write the
equation of $r$ in the Fourier space:
$$\eps^2\widehat{r}_l''(t)+\mu_l^2\widehat{r}_l(t)+\frac{1}{\eps^2}\widehat{r}_l(t)+\widehat{f}_l(t)
+\eps^2\fe^{it/\eps^2}\widehat{z}_l''(t)+\eps^2\fe^{-it/\eps^2}\widehat{\overline{z}}_l''(t)=0,\quad t>0,\ l\in\bZ,$$
where for simplicity we denote
$$f(x,t)=\left(-2|z(x,t)|^2+q(x,t)+I(x,t)\right)r(x,t).$$
For some $n\geq0$, suppose that we know $\widehat{r}_l(t_n)$ and $\widehat{r}_l'(t_n)$. Then we write the solution with the Duhamel's formula:
\begin{align}
\widehat{r}_l(t_{n+1})=&\cos(\omega_l\tau)\widehat{r}_l(t_n)+\frac{\sin(\omega_l\tau)}{\omega_l}
\widehat{r}_l'(t_n)-\int_0^\tau\frac{\sin(\omega_l(\tau-s))}{\eps^2\omega_l}
\left[\widehat{f}_l(t_n+s)\right.\nonumber\\
&\left.+\eps^2\fe^{i(t_n+s)/\eps^2}\widehat{z}_l''(t_n+s)
+\eps^2\fe^{-i(t_n+s)/\eps^2}\widehat{\overline{z}}_l''(t_n+s)\right]ds,\label{r duhamel}
\end{align}
where $\omega_l=\frac{\sqrt{1+\eps^2\mu_l^2}}{\eps^2}$. To get $\widehat{r}_l(t_{n+1})$, we apply proper quadrature rules to the terms in integration. For the one involving $\widehat{f}_l$,  we apply the right rectangle rule to simply have:
$$\int_0^\tau\frac{\sin(\omega_l(\tau-s))}{\eps^2\omega_l}
\widehat{f}_l(t_n+s)ds\approx0.$$
Note that $f=O(r)=O(\eps^2)$ and $\partial_tf=O(1)$ since
$\eps<\gamma$, this quadrature error is uniformly at $O(\tau^2)$. For the other two terms, we take
$$\widehat{z}_l''(t_n+s)\approx \frac{\widehat{z}_l'(t_{n+1})-
\widehat{z}_l'(t_n)}{\tau},\quad \widehat{\overline{z}}_l''(t_n+s)\approx \frac{\widehat{\overline{z}}_l'(t_{n+1})-
\widehat{\overline{z}}_l'(t_n)}{\tau},\quad 0\leq s\leq\tau,$$
where the truncation error is $O(\tau\partial_t^3z)$,
and then we integrate the rest trigonometric parts exactly, which is in the spirit of Gautschi type quadrature \cite{Gautschi,Hochbruck}:
\begin{align*}&\int_0^\tau\frac{\sin(\omega_l(\tau-s))}{\eps^2\omega_l}
\left[\eps^2\fe^{i(t_n+s)/\eps^2}\widehat{z}_l''(t_n+s)
+\eps^2\fe^{-i(t_n+s)/\eps^2}\widehat{\overline{z}}_l''(t_n+s)\right]ds\\
\approx& \fe^{it_n/\eps^2}\sigma_l\big(\widehat{z}_l'(t_{n+1})-
\widehat{z}_l'(t_n)\big)+\fe^{-it_n/\eps^2}\overline{\sigma_l}\big
(\widehat{\overline{z}}_l'(t_{n+1})-
\widehat{\overline{z}}_l'(t_n)\big),
\end{align*}
where
\be\label{sigma}
\begin{split}
  \sigma_l&=\int_0^\tau\frac{\sin(\omega_l(\tau-s))}{\tau\omega_l}
  \fe^{is/\eps^2}ds\\
  &=\frac{\eps^2}{\tau\omega_l(\eps^4\omega_l^2-1)}
  \left[\eps^2\omega_l(\fe^{i\tau/\eps^2}-\cos(\omega_l\tau))
  -i\sin(\omega_l\tau)\right].
\end{split}
\ee
Note that $\partial_t^3z=O(\partial_{tt}I)=O(1/\gamma^2)$, the quadrature error here is thus  $O(\tau^2\eps^2/\gamma^2)$, which is uniformly at $O(\tau^2)$ by noticing $\eps<\gamma$. The approximation for $\widehat{r}_l(t_{n+1})$ in total reads as
\begin{align}
\widehat{r}_l(t_{n+1})&\approx\cos(\omega_l\tau)\widehat{r}_l(t_n)
+\frac{\sin(\omega_l\tau)}{\omega_l}
\widehat{r}_l'(t_n)-\fe^{it_n/\eps^2}\sigma_l\big[\widehat{z}_l'(t_{n+1})-
\widehat{z}_l'(t_n)\big]\nonumber\\
&\quad-\fe^{-it_n/\eps^2}\overline{\sigma_l}\big
[\widehat{\overline{z}}_l'(t_{n+1})-
\widehat{\overline{z}}_l'(t_n)\big].\label{r app}
\end{align}
Besides the UA truncation error, another advantage of the above approximation is that we do not need to compute $\widehat{z}_l''$. Instead, we only need to get $\widehat{z}_l'(t_n)$ which is directly given by the equation of $z$:
$$\partial_tz(x,t_n)=\frac{i}{2}\left[-\partial_{xx}z(x,t_n)
+(-2|z(x,t_n)|^2+q(x,t_n)+I(x,t_n))z(x,t_n)\right],\quad n\geq0.$$
Meanwhile, by the derivative of the Duhamel's formula, we have
\begin{align*}
\widehat{r}_l'(t_{n+1})=&-\omega_l\sin(\omega_l\tau)\widehat{r}_l(t_n)+\cos(\omega_l\tau)
\widehat{r}_l'(t_n)-\int_0^\tau\frac{\cos(\omega_l(\tau-s))}{\eps^2}
\left[\widehat{f}_l(t_n+s)
\right.\\
&\left.+\eps^2\fe^{i(t_n+s)/\eps^2}\widehat{z}_l''(t_n+s)+\eps^2\fe^{-i(t_n+s)/\eps^2}
\widehat{\overline{z}}_l''(t_n+s)\right]ds.
\end{align*}
We approximate the functions in the integration in the similar manner as for $\widehat{r}_l(t_{n+1})$ to get
\begin{align}
\widehat{r}_l'(t_{n+1})\approx&-\omega_l\sin(\omega_l\tau)\widehat{r}_l(t_n)+\cos(\omega_l\tau)
\widehat{r}_l'(t_n)-\frac{\tau}{\eps^2}\widehat{f}_l(t_{n+1})\nonumber\\
&-\fe^{it_{n}/\eps^2}\dot{\sigma}_l\left[\widehat{z}_l'(t_{n+1})
-\widehat{z}_l'(t_{n})\right]
-\fe^{-it_{n}/\eps^2}\overline{\dot{\sigma}}_l\left[\widehat{\overline{z}}_l'(t_{n+1})-
\widehat{\overline{z}}_l'(t_n)\right],\label{dr app}
\end{align}
where
\be\label{dsigma}
\begin{split}
  \dot{\sigma}_l&=\fl{1}{\tau}\int_0^\tau\cos(\omega_l(\tau-s))
  \fe^{is/\eps^2}ds\\
  &=\frac{\eps^2}{\tau(\eps^4\omega_l^2-1)}
  \left[i\fe^{i\tau/\eps^2}-i\cos(\omega_l\tau)+\eps^2\omega_l\sin(\omega_l\tau)\right].
\end{split}
\ee
The approximations (\ref{r app}) and (\ref{dr app}) complete an update of $r(x,t)$ from $t_n$ to $t_{n+1}$ in
the type of the exponential (or trigonometric) integrator \cite{HO}.

{\bf{Exponential integrator for $q$.}} To obtain $q(x,t)$, we begin similarly by writing the equation
of $q$ in the Fourier space:
\begin{align*}
 \gamma^2\widehat{q}_l''+\mu_l^2 \widehat{q}_l=&2\gamma^2\widehat{(|z|^2)}_l''
-\fe^{2it/\eps^2}\mu_l^2 \widehat{(z^2)}_l-2\fe^{it/\eps^2}\mu_l^2\widehat{(zr)}_l
-\fe^{-2it/\eps^2}\mu_l^2\widehat{(\overline{z}^2)}_l \\
&-2\fe^{-it/\eps^2}\mu_l^2 \widehat{(\overline{z}r)}_l
-\mu_l^2 \widehat{(r^2)}_l,\quad t>0,\ l\in\bZ.
\end{align*}
The Duhamel's formula gives
\be\label{q_app}
\widehat{q}_l(t_{n+1})=\cos(\theta_l\tau)\widehat{q}_l(t_{n})+\frac{\sin(\theta_l\tau)}{\theta_l}
\widehat{q}_l'(t_{n})+A_{1,l}^n-A_{2,l}^n-A_{3,l}^n-A_{4,l}^n,
\ee
where
\begin{align*}
  &A_{1,l}^n=\int_0^\tau\frac{2\sin(\theta_l(\tau-s))}{\theta_l}\widehat{(|z|^2)}_l''
(t_n+s)ds,\  A_{2,l}^n=\int_0^\tau \theta_l\sin(\theta_l(\tau-s))\widehat{(r^2)}_l(t_n+s)ds,\\
&A_{3,l}^n=\int_0^\tau\theta_l\sin(\theta_l(\tau-s))\left[\fe^{2i(t_n+s)/\eps^2} \widehat{(z^2)}_l(t_n+s)+\fe^{-2i(t_n+s)/\eps^2}\widehat{(\overline{z}^2)}_l(t_n+s)\right]ds,\\
&A_{4,l}^n=\int_0^\tau 2\theta_l\sin(\theta_l(\tau-s))\left[\fe^{i(t_n+s)/\eps^2}\widehat{(zr)}_l(t_n+s)
+\fe^{-i(t_n+s)/\eps^2}\widehat{(\overline{z}r)}_l(t_n+s)\right]ds.
\end{align*}
Noticing that $\p_{tt}|z|^2=O(1)$ \eqref{absz}, and
\[\p_t^3|z|^2=\mathrm{Im}(\overline{\p_{tt}z}\p_{xx}z+
2\overline{\p_{t}z}\p_{xxt}z+\overline{z}\p_{xxtt}z)=O(1/\gamma),\]
we then are motivated to approximate the trigonometric kernel function with
$s=\tau$ to get $A_{1,l}^n\approx0$ with a uniform truncation error at $O(\tau^2)$.
Similarly, we can get $A_{2,l}^n\approx0$ with a uniform truncation error at $O(\tau^2)$ in view of the fact that $r=(\eps^2)$. For $A_{3,l}^n$, to
make sure that the truncation error is introduced in a uniform manner, we firstly perform an
integration-by-parts to rewrite $A_{3,l}^n$ so that the kernel of the integration part is bounded
as $\eps,\gamma\to0$:
\begin{align*}
A_{3,l}^n&=\fe^{2it_n/\eps^2}\left[\alpha_l(\tau)\widehat{(z^2)}_l(t_{n+1})-\alpha_l(0)\widehat{(z^2)}_l(t_{n})
-\int_0^\tau2\alpha_l(s)\widehat{(z\partial_tz)}_l(t_{n}+s)ds\right]\\
&\quad+\fe^{-2it_n/\eps^2}\left[\overline{\alpha_l}(\tau)\widehat{(\overline{z}^2)}_l(t_{n+1})
-\overline{\alpha_l}(0)\widehat{(\overline{z}^2)}_l(t_{n})
-\int_0^\tau2\overline{\alpha_l}(s)\widehat{(\overline{z\partial_tz})}_l(t_{n}+s)ds\right],
\end{align*}
where
\be\label{alpha}
\begin{split}
\alpha_l(s)&:=\int_0^s\theta_l\sin(\theta_l(\tau-\sigma))e^{2i\sigma/\eps^2}d\sigma
=\frac{\eps^2\theta_l}{4-\eps^4\theta_l^2}\left[
\eps^2\theta_l\cos(\theta_l\tau)+2i\sin(\theta_l\tau)\right.\\
&\qquad\qquad\quad\qquad\qquad\left.-\fe^{2is/\eps^2}\left(\eps^2\theta_l\cos(\theta_l(\tau-s))
+2i\sin(\theta_l(\tau-s))\right)
\right].
\end{split}
\ee
Then we apply the Gautschi type quadrature, i.e., $\widehat{(z\partial_tz)}_l(t_{n}+s)\approx
\widehat{(z\partial_tz)}_l(t_{n})$ and integrate the trigonometric parts exactly to get that
\be\label{A3 app}
\begin{split}
A_{3,l}^n &\approx
\fe^{2it_n/\eps^2}\left[\alpha_l(\tau)\widehat{(z^2)}_l(t_{n+1})
-\kappa_l\widehat{(z\partial_tz)}_l(t_{n})\right]\\
&\quad+\fe^{-2it_n/\eps^2}\left[\overline{\alpha_l}(\tau)\widehat{(\overline{z}^2)}_l(t_{n+1})
-\overline{\kappa_l}\widehat{(\overline{z\partial_tz})}_l(t_{n})\right],
\end{split}
\ee
where
\begin{align}
\kappa_l=\int_0^\tau2\alpha_l(s)ds
&=\frac{2\eps^4\theta_l}{(4-\eps^4\theta_l^2)^2}
\left[4i\eps^2\theta_l\fe^{2i\tau/\eps^2}-4i\eps^2\theta_l\cos(\theta_l\tau)
+(4+\eps^4\theta_l^2)\sin(\theta_l\tau)\right]\nonumber\\
&\quad+\frac{2\tau\eps^2\theta_l}{4-\eps^4\theta_l^2}\left[
\eps^2\theta_l\cos(\theta_l\tau)+2i\sin(\theta_l\tau)
\right].\label{kappa}
\end{align}
Note that $\alpha_l=O(\eps^2/\gamma)$, the local truncation error here is $O(\tau^2\alpha_l\widehat{(\partial_{tt}z)_l})=O(\tau^2\eps^2/\gamma^2)$ which is at the second order uniformly for $0<\eps<\gamma\leq1$.
For $A_{4,l}^n$, we need to take a delicate approximation of $r$ based on the Duhamel's formula (\ref{r duhamel}) as
$$r(x,t_n+s)\approx \cos(s/\eps^2)r(x,t_n)+
\sin(s/\eps^2)r_p^n(x),\quad
r_p^n(x):=\sum_{l\in\bZ}\frac{\widehat{r}_l'(t_n)}{\omega_l}e^{i\mu_l(x+L)},
$$
where for $0\leq s\leq\tau$ the approximation error is at the order of $O(\gamma\tau)$ by noticing that $r=O(\eps^2)$ and $\omega_l=1/\eps^2+O(1)$. More importantly, this approximation separates the temporal highly oscillatory parts in $r$ from the space variable (so as the Fourier modes). Then by taking
$z(x,t_n+s)\approx z(x,t_n)$, we approximate $A_{4,l}^n$ in the Gautschi-type way with a uniform truncation error at $O(\tau^2)$ as
\be\label{A4 app}
\begin{split}
A_{4,l}^n&\approx\int_0^\tau 2\theta_l\sin(\theta_l(\tau-s))\fe^{i(t_n+s)/\eps^2}
\left(\cos(s/\eps^2)\widehat{(zr)}_l(t_n)
+\sin(s/\eps^2)\widehat{(zr_p)}_l(t_n)\right)ds\\
&\,\,+\int_0^\tau 2\theta_l\sin(\theta_l(\tau-s))\fe^{-i(t_n+s)/\eps^2}
\left(\cos(s/\eps^2)\widehat{(\overline{z}r)}_l(t_n)
+\sin(s/\eps^2)\widehat{(\overline{z}r_p)}_l(t_n)\right)ds\\
&=\fe^{it_n/\eps^2}\left[\chi_l^1\widehat{(zr)}_l(t_n)
+\chi_l^2\widehat{(z(t_n)r^n_p)}_l\right]+\fe^{-it_n/\eps^2}\left[\overline{\chi_l^1}
\widehat{(\overline{z}r)}_l(t_n)
+\overline{\chi_l^2}\widehat{(\overline{z(t_n)}r^n_p)}_l\right],
\end{split}
\ee
where we denote
\begin{subequations}\label{chi}
\begin{align}
 \chi_l^1&=\int_0^\tau 2\theta_l\sin(\theta_l(\tau-s))\fe^{is/\eps^2}\cos(s/\eps^2)ds\\
 &=1-\cos(\theta_l\tau)+
 \frac{\eps^2\theta_l}{4-\eps^4\theta_l^2}
 \left[2i\sin(\theta_l
 \tau)+\eps^2\theta_l\cos(\theta_l\tau)
 -\eps^2\theta_l\fe^{2i\tau/\eps^2}
 \right],\nonumber\\
 \chi_l^2&=\int_0^\tau 2\theta_l\sin(\theta_l(\tau-s))\fe^{is/\eps^2}\sin(s/\eps^2)ds\\
 &=\frac{1}{4-\eps^4\theta_l^2}\left[2\eps^2\theta_l\sin(\theta_l
 \tau)-4i\cos(\theta_l\tau)+i\left(4
 +\eps^4\theta_l^2(\fe^{2i\tau/\eps^2}-1)\right)
 \right].\nonumber
\end{align}
\end{subequations}

To complete the integration scheme, by the derivative of the Duhamel's formula of $q$, we have
\begin{align*}
\widehat{q}_l'(t_{n+1})=&-\theta_l\sin(\theta_l\tau)\widehat{q}_l(t_{n})
+\cos(\theta_l\tau)\widehat{q}_l'(t_{n})+B_{1,l}^n-B_{2,l}^n-B_{3,l}^n
-B_{4,l}^n,\quad  n\geq0,
\end{align*}
where
\begin{align*}
  &B_{1,l}^n=\int_0^\tau2\cos(\theta_l(\tau-s))\widehat{(|z|^2)}_l''
(t_n+s)ds,\\
& B_{2,l}^n=\int_0^\tau \theta_l^2\cos(\theta_l(\tau-s))\widehat{(r^2)}_l(t_n+s)ds,\\
&B_{3,l}^n=\int_0^\tau\theta_l^2\cos(\theta_l(\tau-s))\left[\fe^{2i(t_n+s)/\eps^2} \widehat{(z^2)}_l(t_n+s)+\fe^{-2i(t_n+s)/\eps^2}\widehat{(\overline{z}^2)}_l(t_n+s)\right]ds,\\
&B_{4,l}^n=\int_0^\tau 2\theta_l^2\cos(\theta_l(\tau-s))\left[\fe^{i(t_n+s)/\eps^2}\widehat{(zr)}_l(t_n+s)
+\fe^{-i(t_n+s)/\eps^2}\widehat{(\overline{z}r)}_l(t_n+s)\right]ds.
\end{align*}
We approximate these integral terms in a similar way as before. For $B_{1,l}^n$, we set $s=\tau$ in the
kernel function to get
\begin{equation}\label{B1 app}B_{1,l}^n\approx 2\widehat{(|z|^2)}_l'(t_{n+1})-2\widehat{(|z|^2)}_l'(t_{n}).
\end{equation}
For $B_{2,l}^n$, we use the left rectangle rule to get
\begin{equation}\label{B2 app}B_{2,l}^n\approx \tau\theta_l^2\cos(\theta_l\tau)\widehat{(r^2)}_l(t_n).
\end{equation}
For $B_{3,l}^n$, we apply the same integration-by-parts and the Gautschi's quadrature as for $A_{3,l}^n$ to get
\begin{align}
B_{3,l}^n
&\approx
\fe^{2it_n/\eps^2}\left[\beta_l(\tau)\widehat{(z^2)}_l(t_{n+1})-\beta_l(0)\widehat{(z^2)}_l(t_{n})
-\rho_l\widehat{(z\partial_tz)}_l(t_{n})\right]\label{B3 app}\\
&\quad+\fe^{-2it_n/\eps^2}\left[\overline{\beta_l}(\tau)\widehat{(\overline{z}^2)}_l(t_{n+1})
-\overline{\beta_l}(0)\widehat{(\overline{z}^2)}_l(t_{n})
-\overline{\rho_l}\widehat{(\overline{z\partial_tz})}_l(t_{n})\right],\nonumber
\end{align}
where
\be\label{beta}
\begin{split}
\beta_l(s)&=\int_0^s\theta_l^2\cos(\theta_l(\tau-\sigma))e^{2i\sigma/\eps^2}
d\sigma=\frac{\eps^2\theta_l^2}{4-\eps^4\theta_l^2}\left[
2i\cos(\theta_l\tau)-\eps^2\theta_l\sin(\theta_l\tau)\right.\\
&\qquad\qquad\qquad\qquad\left.-\fe^{2is/\eps^2}\left(2i\cos(\theta_l(\tau-s))-\eps^2
\theta_l\sin(\theta_l(\tau-s))\right)\right],
\end{split}
\ee
\be\label{rho}
\begin{split}
\rho_l&=\int_0^\tau2\beta_l(s)ds=\frac{2\tau\eps^2\theta_l^2}{4-\eps^4\theta_l^2}\left[
2i\cos(\theta_l\tau)-\eps^2\theta_l\sin(\theta_l\tau)
\right]\\
&\quad\,+\frac{2\eps^4\theta_l^2}{(4-\eps^4\theta_l^2)^2}
\left[(4+\eps^4\theta_l^2)\cos(\theta_l\tau)-(4+\eps^4\theta_l^2)\fe^{2i\tau/\eps^2}
+4i\eps^2\theta_l\sin(\theta_l\tau)\right].
\end{split}
\ee
For $B_{4,l}^n$, we adopt the similar approximation as for $A_{4,l}^n$ to get
\be\label{B4 app}
\begin{split}
B_{4,l}^n&\approx
\fe^{it_n/\eps^2}\left[\dot{\chi}_l^1\widehat{(zr)}_l(t_n)
+\dot{\chi}_l^2\widehat{(z(t_n)r^n_p)}_l\right]\\
&\quad+\fe^{-it_n/\eps^2}\left[\overline{\dot{\chi}_l^1}
\widehat{(\overline{z}r)}_l(t_n)
+\overline{\dot{\chi}_l^2}\widehat{(\overline{z(t_n)}r^n_p)}_l\right],
\end{split}
\ee
where
\begin{subequations}\label{dchi}
\begin{align}
 \dot{\chi}_l^1&=\int_0^\tau 2\theta_l^2\cos(\theta_l(\tau-s))\fe^{is/\eps^2}\cos(s/\eps^2)ds\\
 &=\theta\sin(\theta_l\tau)-
 \frac{\eps^2\theta_l^2}{4-\eps^4\theta_l^2}
 \left[2i\fe^{2i\tau/\eps^2}-2i\cos(\theta_l
 \tau)+\eps^2\theta_l\sin(\theta_l\tau) \right],\nonumber\\
 \dot{\chi}_l^2&=\int_0^\tau 2\theta_l^2\cos(\theta_l(\tau-s))\fe^{is/\eps^2}\sin(s/\eps^2)ds\\
 &=\frac{2\theta_l}{4-\eps^4\theta_l^2}
 \left[2i\sin(\theta_l
 \tau)+\eps^2\theta_l\cos(\theta_l\tau)-
 \eps^2\theta_l\fe^{2i\tau/\eps^2}
 \right].\nonumber
\end{align}\end{subequations}

\begin{remark}
If one analyzes the local truncation error induced by the above approximations to  $\partial_tr$ and $\partial_tq$, the error would be at $O(\tau^2/\eps^2)$ and $O(\tau^2/\gamma)$, respectively. This would not affect the approximation error for $r$ and $q$ since the coefficients involving $\partial_tr$ and $\partial_tq$ for approximating $r$ and $q$ (cf. \eqref{r app} and \eqref{q_app}) are at the order of $O(\eps^2)$ and $O(\gamma)$, respectively.
The rigorous convergence analysis is undergoing.
\end{remark}

\bigskip

\textbf{UA scheme.}
We summarize the proposed approximations (\ref{z1}), (\ref{z2}), (\ref{r app}), (\ref{dr app}), (\ref{A3 app}), (\ref{A4 app}), (\ref{B1 app})-(\ref{B3 app}) and (\ref{B4 app}) above and present the full scheme for solving the decomposed system (\ref{KGZ 1d}) and hence for solving the KGZ system \eqref{KGZ}.
For spatial discretization, we choose an even integer $N\in\bN^+$ to truncate the Fourier series. We denote
$z^n(x)\approx z(x,t_n)$, $r^n(x)\approx r(x,t_n)$, $\dot{r}^n(x)\approx \partial_tr(x,t_n)$, $I^n(x)\approx I(x,t_n)$, $q^n(x)\approx q(x,t_n)$ and $\dot{q}^n(x)\approx \partial_tq(x,t_n)$ as the numerical solutions for the decomposed system (\ref{KGZ 1d}). Choosing
$z^0(x)=z(x,0)$, $r^0(x)=r(x,0)$, $\dot{r}^0(x)=\partial_tr(x,0)$, $I^0(x)=I(x,0)$, $q^0(x)=q(x,0)$ and
$\dot{q}^0(x)=\partial_tq(x,0)$, we update for $n\geq0$ as
\begin{subequations}\label{scheme}
\begin{align}
  &z^{n+1}(x)=\fe^{-\frac{i}{2}\partial_{xx}}
  \fe^{\fl{i}{2}\left[-2\tau|z^n(x)|^2+\tau q^n(x)+J^n(x)\right]}z^n(x),\\
  &\widehat{(q^{n+1})}_l=\cos(\theta_l\tau)\widehat{(q^n)}_l+\frac{\sin(\theta_l\tau)}{\theta_l}
\widehat{(\dot{q}^n)}_l-A_{l}^n,\\
&\widehat{(\dot{q}^{n+1})}_l=-\theta_l\sin(\theta_l\tau)\widehat{(q^n)}_l
+\cos(\theta_l\tau)\widehat{(\dot{q}^n)}_l+\widehat{(g^n)}_l-
\tau\theta_l^2\cos(\theta_l\tau)\widehat{((r^n)^2)}_l\\
&\qquad\quad\quad \,-B_{l}^n,\nonumber\\
  &\widehat{(r^{n+1})}_l=\cos(\omega_l\tau)\widehat{(r^n)}_l+\frac{\sin(\omega_l\tau)}{\omega_l}
\widehat{(\dot{r}^n)}_l-\fe^{it_n/\eps^2}\sigma_l\left[\widehat{(\dot{z}^{n+1})}_l-
\widehat{(\dot{z}^n)}_l\right]\\
&\qquad\quad\quad \,-\fe^{-it_n/\eps^2}\overline{\sigma_l}\left
[\widehat{(\overline{\dot{z}^{n+1}})}_l-
\widehat{(\overline{\dot{z}^n})}_l\right],\nonumber \\
&\widehat{(\dot{r}^{n+1})}_l=-\omega_l\sin(\omega_l\tau)\widehat{(r^n)}_l
+\cos(\omega_l\tau)
\widehat{(\dot{r}^n)}_l-\frac{\tau}{\eps^2}\widehat{(f^{n+1})}_l
\\
&\qquad\qquad\ -\fe^{it_{n}/\eps^2}\dot{\sigma}_l\big[\widehat{(\dot{z}^{n+1})}_l
-\widehat{(\dot{z}^n)}_l\big]-\fe^{-it_{n}/\eps^2}\overline{\dot{\sigma}_l}
\big[\widehat{(\overline{\dot{z}^{n+1}})}_l-
\widehat{(\overline{\dot{z}^n})}_l\big],\nonumber
\end{align}
\end{subequations}
where $l=-N/2,\ldots, N/2-1$, $J^n(x)=\sum\limits_{l=-N/2}^{N/2-1} \widehat{(J^n)}_l e^{i\mu_l(x+L)}$, with
\[\widehat{(J^n)}_l=\frac{\sin(\theta_lt_{n+1})-\sin(\theta_lt_{n})
}{\theta_l}\widehat{I}_l(0)+\frac{\cos(\theta_lt_n)-\cos(\theta_lt_{n+1})}{\theta_l^2} \widehat{I}_l'(0),\]
and
\begin{align*}
&f^n(x)=\left(-2|z^n(x)|^2+q^n(x)+I^n(x)\right)r^n(x),\\
&g^n(x)=4\mathrm{Re}\left[\overline{z^{n+1}(x)}\dot{z}^{n+1}(x)-\overline{z^{n}(x)}\dot{z}^{n}(x)\right],\\
&\dot{z}^n(x)=\frac{i}{2}\left[-\partial_{xx} z^n(x)+(-2|z^n(x)|^2+q^n(x)+I^n(x))z^n(x)\right],\\
&A_{l}^n
=\fe^{2i\frac{t_n}{\eps^2}}\left[\alpha_l(\tau)\widehat{((z^{n+1})^2)}_l
-\kappa_l\widehat{(z^n\dot{z}^n)}_l\right]+\fe^{-2i\frac{t_n}{\eps^2}}\left[
\overline{\alpha_l}(\tau)\widehat{((\overline{z^{n+1}})^2)}_l
-\overline{\kappa_l}\widehat{(\overline{z^n\dot{z}^n})}_l\right]\\
&\qquad+
\fe^{i\frac{t_n}{\eps^2}}\left[\chi_l^1\widehat{(z^nr^n)}_l
+\chi_l^2\widehat{(z^nr_p^n)}_l\right]
+\fe^{-i\frac{t_n}{\eps^2}}\left[\overline{\chi_l^1}
\widehat{(\overline{z^n}r^n)}_l
+\overline{\chi_l^2}\widehat{(\overline{z^n}r_p^n)}_l\right],\\
&B_l^n=\fe^{2i\frac{t_n}{\eps^2}}\left[\beta_l(\tau)\widehat{((z^{n+1})^2)}_l
-\rho_l\widehat{(z^n\dot{z}^n)}_l\right]+\fe^{-2i\frac{t_n}{\eps^2}}\left[
\overline{\beta_l}(\tau)\widehat{((\overline{z^{n+1}})^2)}_l
-\overline{\rho_l}\widehat{(\overline{z^n\dot{z}^n})}_l\right]\\
&\qquad+
\fe^{i\frac{t_n}{\eps^2}}\left[\dot{\chi}_l^1\widehat{(z^nr^n)}_l
+\dot{\chi}_l^2\widehat{(z^nr_p^n)}_l\right]
+\fe^{-i\frac{t_n}{\eps^2}}\left[\overline{\dot{\chi}_l^1}
\widehat{(\overline{z^n}r^n)}_l
+\overline{\dot{\chi}_l^2}\widehat{(\overline{z^n}r_p^n)}_l\right],
\end{align*}
with
\begin{align*}
  & I^n(x)=\sum\limits_{l=-N/2}^{N/2-1}\left(\cos(\theta_l t_n)\widehat{I}_l(0)+\fl{\sin(\theta_l t_n)}{\theta_l} \widehat{I}_l'(0)\right)e^{i\mu_l(x+L)},\\
  &r_p^n(x)=\sum_{l=-N/2}^{N/2-1}\frac{\widehat{(\dot{r}^n)}_l}{\omega_l}e^{i\mu_l(x+L)}.
\end{align*}
The coefficients $\sigma_l,\dot{\sigma}_l,\alpha_l,\dot{\alpha}_l,\kappa_l,\rho_l,\chi_l^1,\chi_l^2,\dot{\chi}_l^1$ and $\dot{\chi}_l^2$ are defined respectively in (\ref{sigma}), (\ref{dsigma})-(\ref{dchi}).
Based on the multiscale expansion (\ref{mfe}), (\ref{dec-phi}) and the numerical solution from the decomposed system, we have the numerical solution for the KGZ system (\ref{KGZ}): $\psi^n(x)\approx \psi(x,t_n)$ and $\phi^n(x)\approx \phi(x,t_n)$ at each time step $n\in\bN$ as
\begin{align}\label{MTI}
 &\psi^n=\fe^{it_n/\eps^2}z^n+\fe^{-it_n/\eps^2}\overline{z^n}+r^n,
 \quad \phi^n=-2|z^n|^2+I^n+q^n,
\end{align}
and we refer to this scheme as \textbf{multiscale time integrator} (MTI) Fourier spectral method.

The proposed MTI scheme (\ref{MTI}) with (\ref{scheme}) is fully explicit. In practice, we would give a discretization to the space variable $x\in[-L,L]$ with mesh size $\Delta x=2L/N$, and the Fourier coefficients in (\ref{scheme}) are computed by the trigonometric interpolation \cite{ST}. The computational cost at each time level is $O(N\log N)$ thanks to the fast Fourier transform. As we explained along the derivation of the scheme, the truncation error of MTI is uniformly bounded for all $0<\eps<\gamma\leq1$, and therefore the MTI scheme is expected to be (verified numerically in the next section) \emph{uniformly accurate (UA)} for solving the KGZ (\ref{KGZ}) with first order and spectral order of convergence in time and space, respectively. Thanks to the UA property, the MTI scheme is \emph{super-resolution} in time for the high frequencies.

\begin{remark}
  A second order UA scheme for the KGZ system (\ref{KGZ}) in the simultaneous limit regime would be very challenging. There are two main difficulties. The first one is the integration of the nonlinear Schr\"{o}dinger equation with a highly oscillatory potential \cite{NLSOP} where standard Strang splitting can not provide uniform accuracy at the second order. Another difficulty is the necessity of a higher order multiscale expansion for $\psi$ and $\phi$.
\end{remark}

\section{Numerical results}\label{sec:4}
In this section, we present numerical results of the proposed MTI scheme (\ref{MTI}) with (\ref{scheme}) for solving the KGZ system (\ref{KGZ}) in the simultaneous high-plasma-frequency and subsonic limit regime $\eps<\gamma\to0^+$.

\subsection{Accuracy tests}
We begin with two one-dimensional examples to test the accuracy of the proposed MTI scheme. The first one is an example with initial localized wave in the whole space. The second example is the plane wave type solution on a periodic box. In both cases, the chosen initial data belongs to the incompatible class, and the reference solutions are obtained by the EI scheme (\ref{EWI}) (in the appendix) with a very small step size, e.g., $\tau=10^{-6}$ and $\Delta x=1/16$ (or $\Delta x=\pi/128$).

\begin{example}\label{ex1}\emph{(Whole space)}
  We take the truncated computational domain as $x\in\Omega=[-2^{m_0+3},$ $2^{m_0+3}]$ when
  $\eps=1/2^{m_0}$ for $m_0\in\bN$ and $\gamma=2\eps$. The expanding size of the domain is to make sure that the waves during the dynamics are always far away from the boundary such that the periodic boundary condition does not introduce a significant truncation error relative to the problem in the whole space.  The initial data of (\ref{KGZ}) in 1D is given as
  \[\psi_0(x)=\mathrm{sech}(x^2),\quad
  \psi_1(x)=\fe^{-x^2}/2,\quad
\phi_0(x)=\sin(x)\fe^{-x^2},
  \quad \phi_1(x)=\mathrm{sech}(x^2)/\sqrt{\pi}.
\]
\end{example}

\begin{example}\label{ex2}\emph{(Torus)}
  We consider the KGZ system (\ref{KGZ}) on an one-dimensional torus $\Omega=[-\pi,\pi]$. For
  $\eps=1/2^{m_0},\ m_0\in\bN$ and $\gamma=e\eps$, the initial data is given as
\[\psi_0(x)=\frac{2\sin(x)}{2-\cos(x)},\ \,\,
  \psi_1(x)=\cos^2(x),\ \,\,\phi_0(x)=\frac{\cos(x)}{2-\sin(x)},\ \,\, \phi_1(x)=\frac{\sin(x)\cos(2x)}{2-\cos(x)}.\]
\end{example}

\begin{figure}[t!]
\begin{minipage}[t]{0.5\linewidth}
\centering
\includegraphics[height=4.5cm,width=6.9cm]{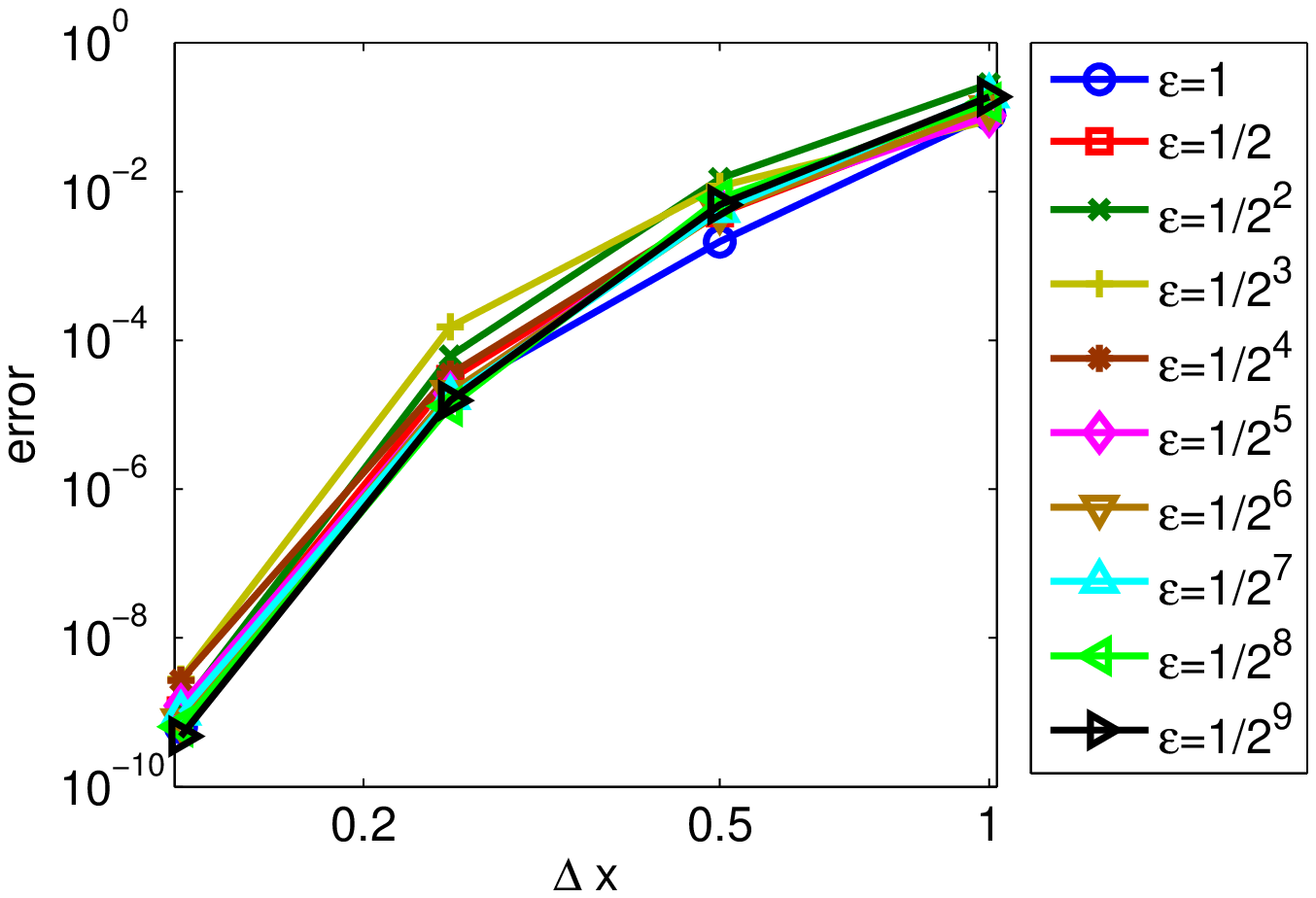}
\end{minipage}%
\hspace{1mm}
\begin{minipage}[t]{0.5\linewidth}
\centering
\includegraphics[height=4.5cm,width=6.9cm]{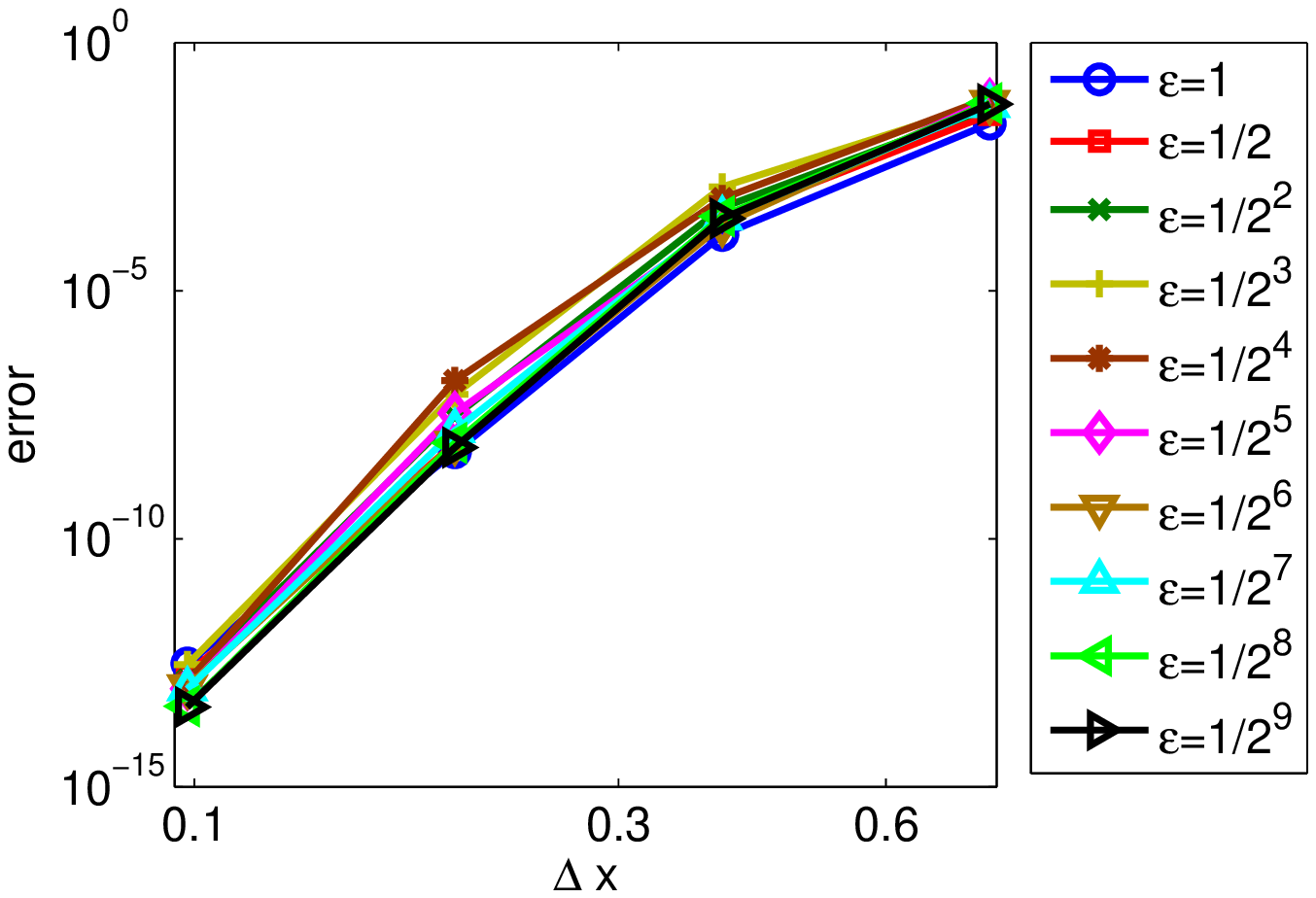}
\end{minipage}
\caption{Spatial errors of MTI at $t=0.5$ for Example \ref{ex1} (left) and \ref{ex2} (right).}\label{fig4}
\end{figure}
For both examples we solve the KGZ system until $t=0.5$ for a wide range of $\eps\in(0,1]$. To quantify the numerical method, we compute the error in maximum norm, i.e.,
\[\mathrm{error}=\|\psi^n-\psi(\cdot,t_n)\|_{L^\infty}+\|\phi^n-\phi(\cdot,t_n)\|_{L^\infty}.\]
The spatial discretization error of MTI under different $\Delta x=|\Omega|/N$ and $\eps$ but fixed $\tau=10^{-7}$
is shown in Figure \ref{fig4}. To observe the temporal approximation error, we fix $\Delta x=1/16$ for Example \ref{ex1} and $\Delta x=\pi/128$ for Example \ref{ex2}, respectively, so that the spatial discretization error is negligible. The error of the MTI scheme under different $\tau$ and $\eps$ is shown in Figures \ref{fig1} and \ref{fig2}, respectively for Examples \ref{ex1} and \ref{ex2}. To make a comparison, we show
the performance of the EI scheme (\ref{EWI}) for Example \ref{ex2} in Figure \ref{fig3}.

\begin{figure}[t!]
\begin{minipage}[t]{0.5\linewidth}
\centering
\includegraphics[height=4.5cm,width=6.9cm]{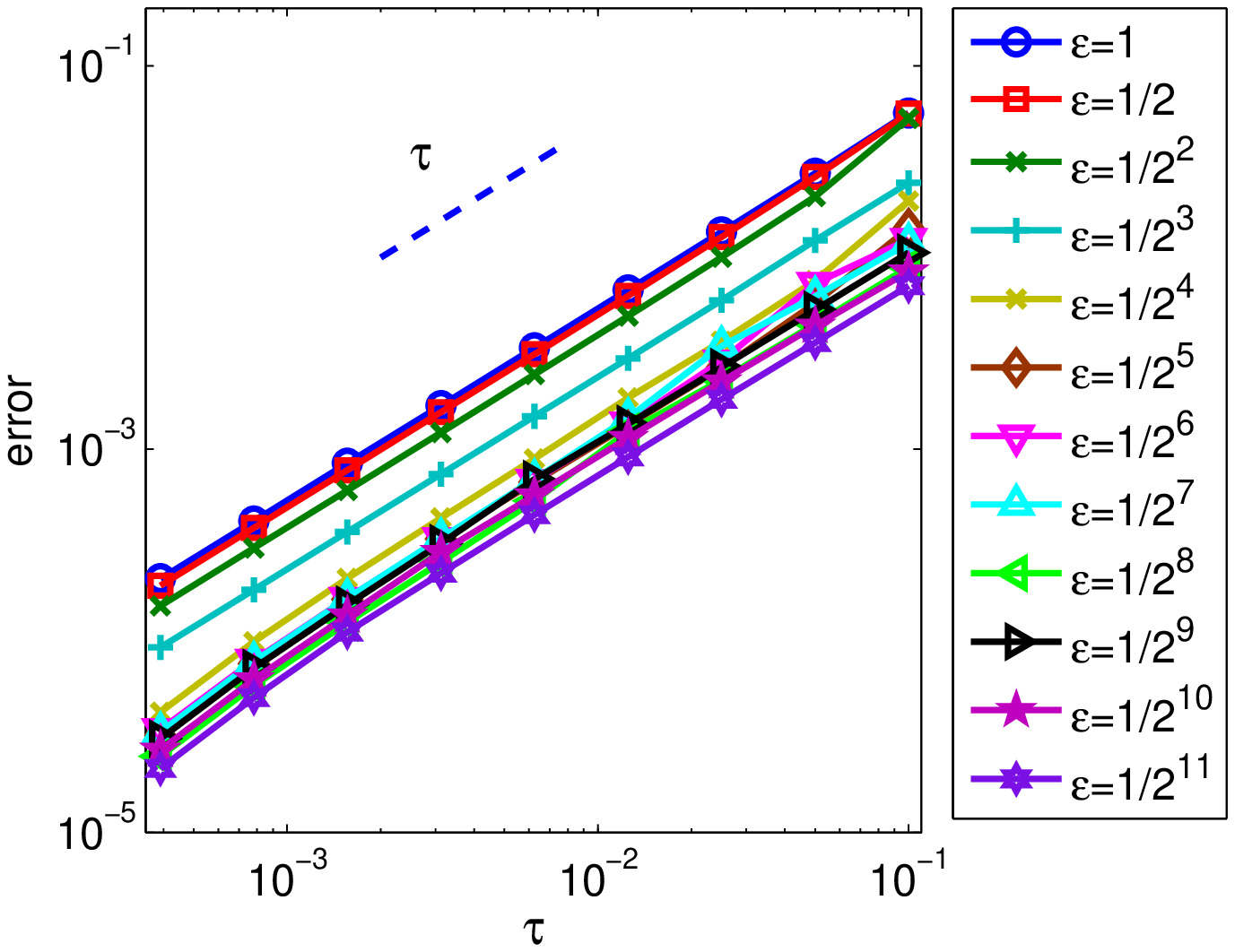}
\end{minipage}%
\hspace{1mm}
\begin{minipage}[t]{0.5\linewidth}
\centering
\includegraphics[height=4.5cm,width=6.9cm]{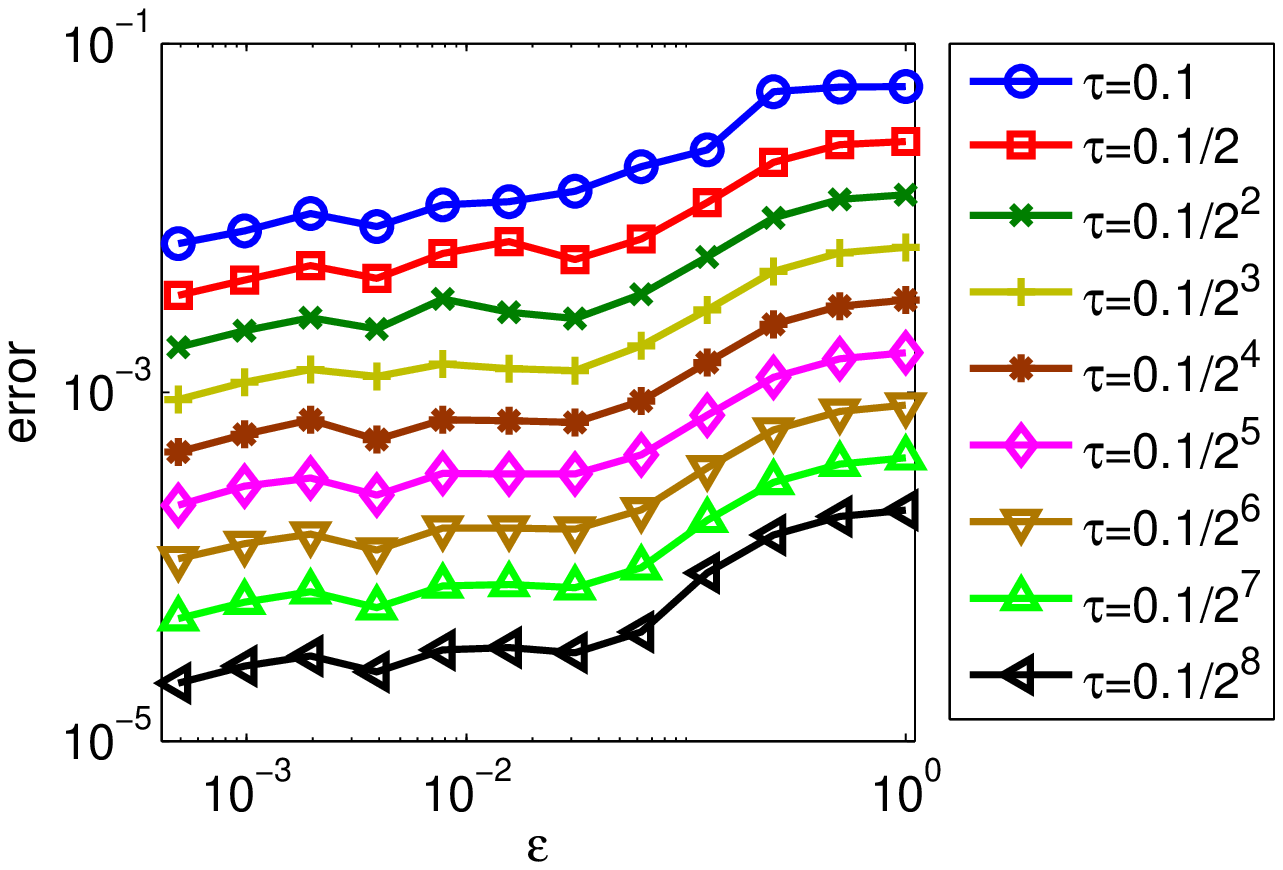}
\end{minipage}
\caption{Temporal errors of MTI at $t=0.5$ for Example \ref{ex1} under different $\eps$ and $\tau$.}\label{fig1}
\end{figure}

\begin{figure}[t!]
\begin{minipage}[t]{0.5\linewidth}
\centering
\includegraphics[height=4.5cm,width=6.9cm]{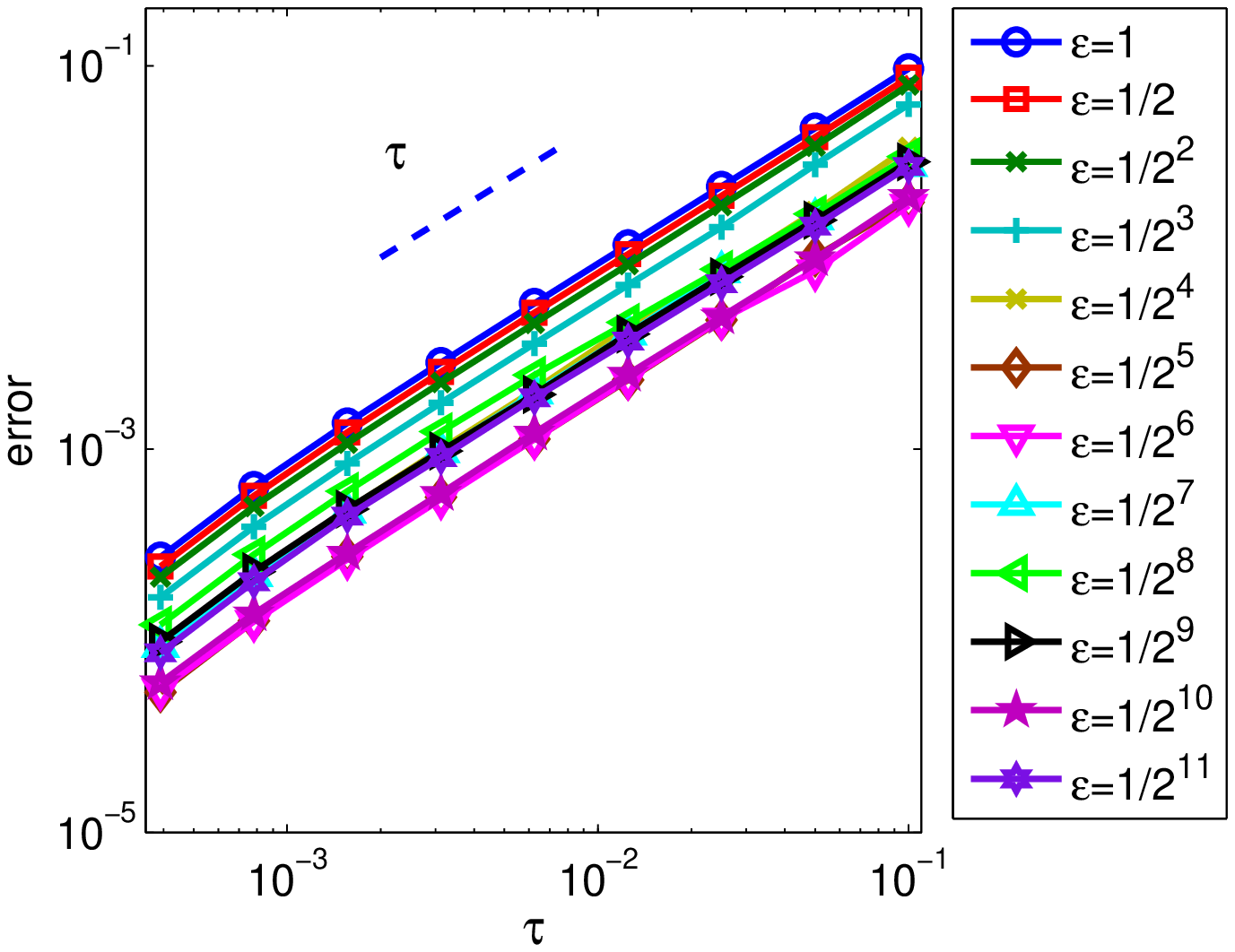}
\end{minipage}%
\hspace{1mm}
\begin{minipage}[t]{0.5\linewidth}
\centering
\includegraphics[height=4.5cm,width=6.9cm]{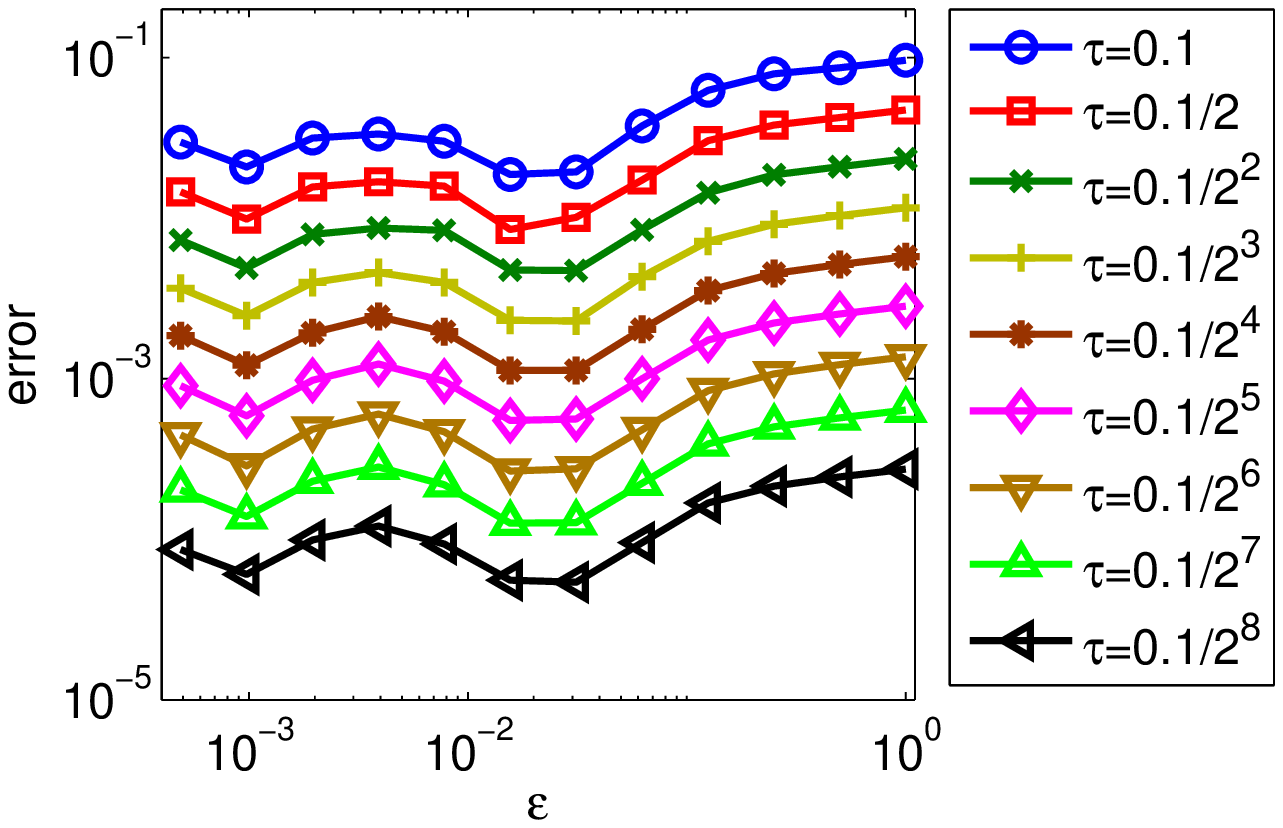}
\end{minipage}
\caption{Temporal errors of MTI at $t=0.5$ for Example \ref{ex2} under different $\eps$ and $\tau$.}\label{fig2}
\end{figure}
\begin{figure}[t!]
\hspace{8mm}
\begin{minipage}[t]{0.8\linewidth}
\centering
\includegraphics[height=4.5cm,width=11cm]{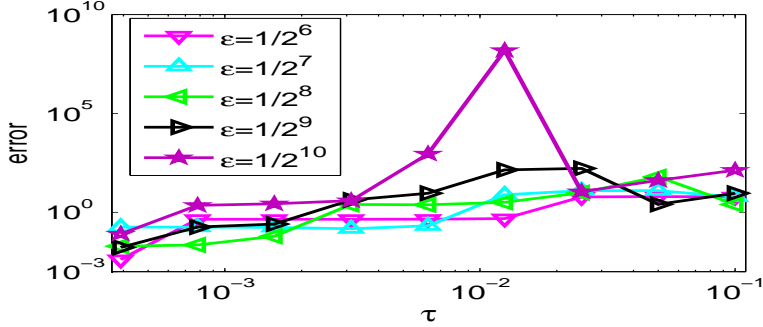}
\end{minipage}
\caption{Comparison: errors of EI at $t=0.5$ for Example \ref{ex2} under different $\eps$ and $\tau$.}\label{fig3}
\end{figure}

Based on the numerical results from Figures \ref{fig4}-\ref{fig3}, it is safe to draw the following conclusions:

1) The MTI scheme (\ref{scheme}) is uniformly accurate for solving the KGZ system (\ref{KGZ}) for all $0<\eps<\gamma\leq1$, where the temporal convergence rate is uniformly linear and the spatial accuracy is uniformly spectral when the solution is smooth in space. In view of the order of the introduced truncation/quadrature errors, the MTI scheme reaches its optimal convergence rate for all fixed $0<\eps<\gamma\leq1$. Thus, we say that the MTI scheme (\ref{MTI}) with (\ref{scheme}) is uniformly and optimally accurate.

2) When $\eps$ becomes small, the EI method (\ref{EWI}) has no accuracy or convergence at all for a wide range of  time step $\tau$ which is a common problem shared by all standard numerical methods, while in such regime the MTI scheme is much more accurate and therefore more efficient.

\subsection{Convergence rates of KGZ to its limit models} We apply the MTI scheme to solve the KGZ system and study the dynamics of the solution in the simultaneous limit $\eps<\gamma\to0^+$. We take the illustrative example from  Section \ref{sec1}.

 \begin{example}\label{ex3}
   We consider the 1D example from the Section \ref{sec1}: i.e., $\gamma=2\eps$ with incompatible initial data \eqref{psi0} and \eqref{E1}.
 \end{example}

Firstly, we study the behavior of each component of the decomposition (\ref{KGZ decomp}) in the limit, by which we illustrate how the decomposition captures the oscillation of the solutions of the KGZ equations. To do so, we solve (\ref{KGZ decomp}) by using the MTI scheme (\ref{scheme}) with a fine mesh on a large domain $[-64,64]$ till $T=1$. The profiles of each component for different $\eps$ are shown in Figure \ref{fig5}, where their combinations through (\ref{mfe}) and (\ref{dec-phi}) give the profiles of $\psi$ and $\phi$ in Figure \ref{fig:incomp}. The fluctuation of the numerical energy:
$$\mathrm{error}=|E^n-E(0)|/|E(0)|$$
during the computation is shown in Figure \ref{fig6}, where $E^n$ denotes the energy (\ref{energy}) of the KGZ at $t_n$ with the numerical solutions from the MTI scheme (\ref{MTI}).
 To verify the order of $r$ and $q$ in Proposition \ref{prop}, we plot  $\|q(\cdot,t)\|_{L^2}/\eps$ and $\|r(\cdot,t)\|_{L^2}/\eps^2$ as functions of time under different $\eps$ in Figure \ref{fig55}.
\begin{figure}[t!]
\begin{minipage}[t]{0.5\linewidth}
\centering
\includegraphics[height=4cm,width=6.9cm]{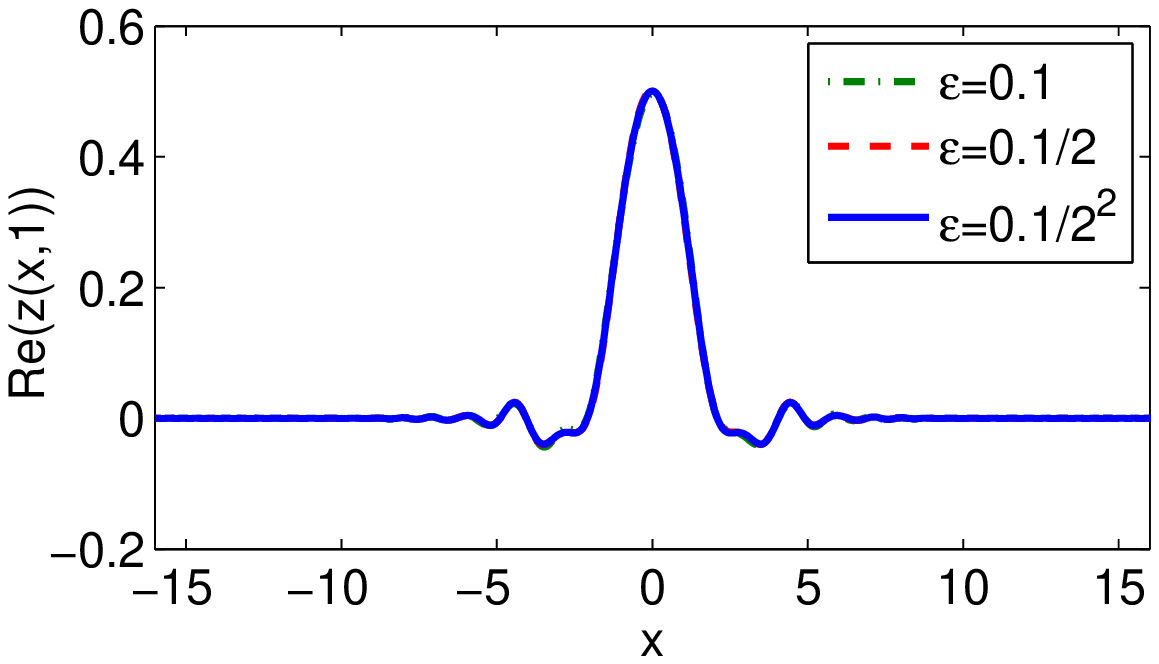}
\end{minipage}%
\hspace{1mm}
\begin{minipage}[t]{0.5\linewidth}
\centering
\includegraphics[height=4cm,width=6.9cm]{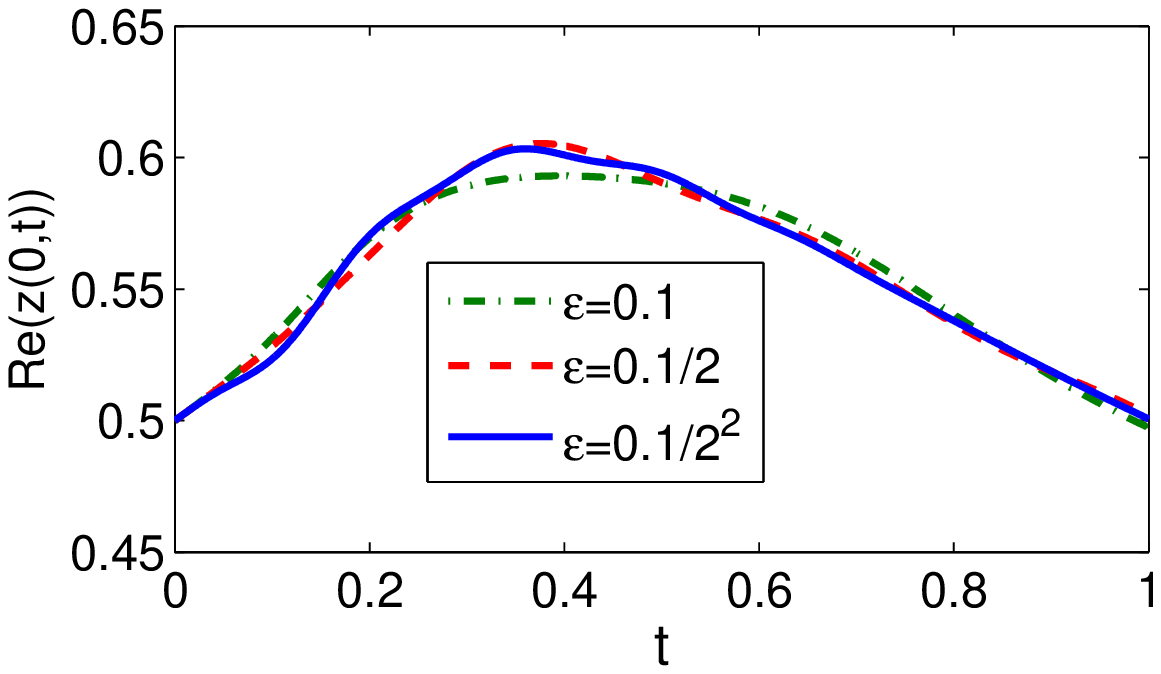}
\end{minipage}
\begin{minipage}[t]{0.5\linewidth}
\centering
\includegraphics[height=4cm,width=6.9cm]{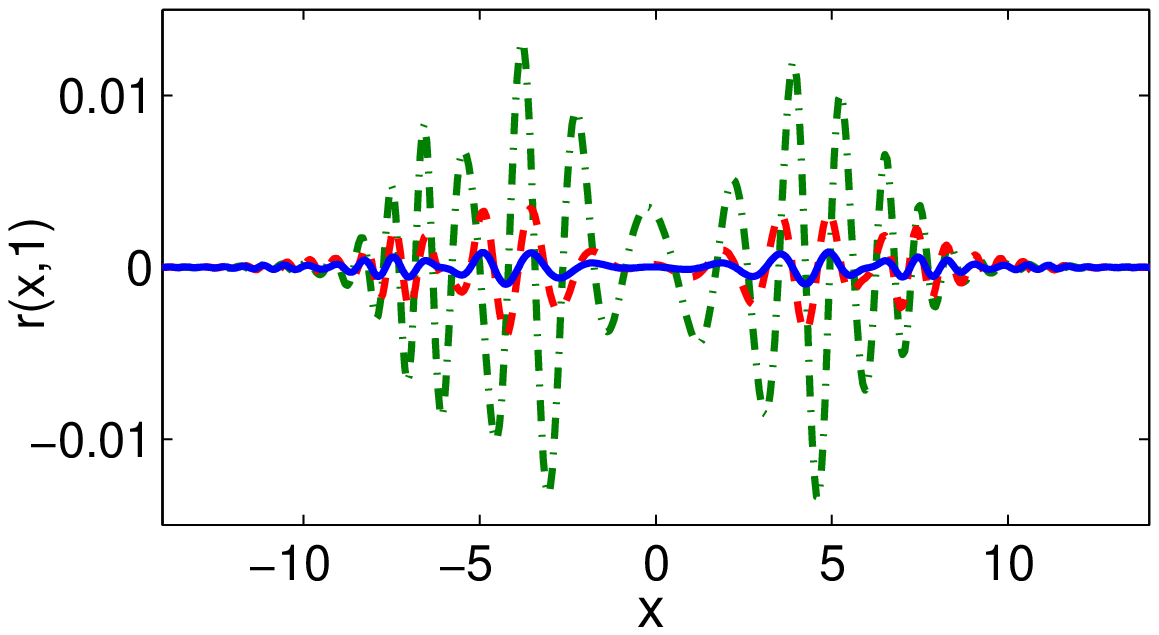}
\end{minipage}%
\hspace{1mm}
\begin{minipage}[t]{0.5\linewidth}
\centering
\includegraphics[height=4cm,width=6.9cm]{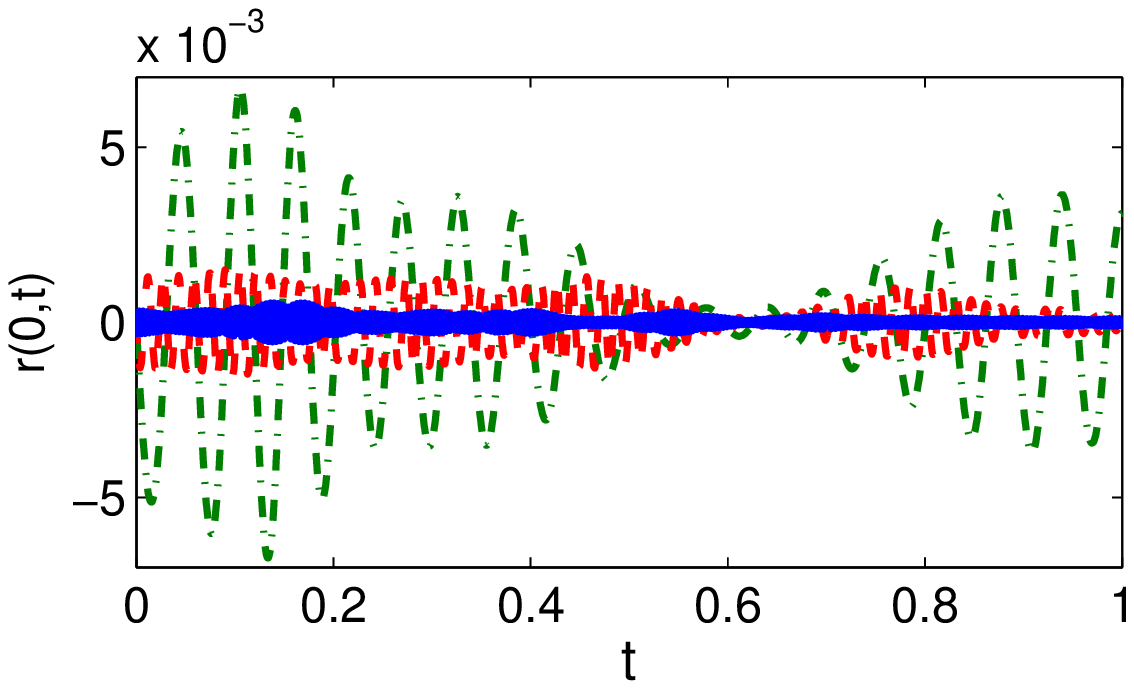}
\end{minipage}
\begin{minipage}[t]{0.5\linewidth}
\centering
\includegraphics[height=4cm,width=6.9cm]{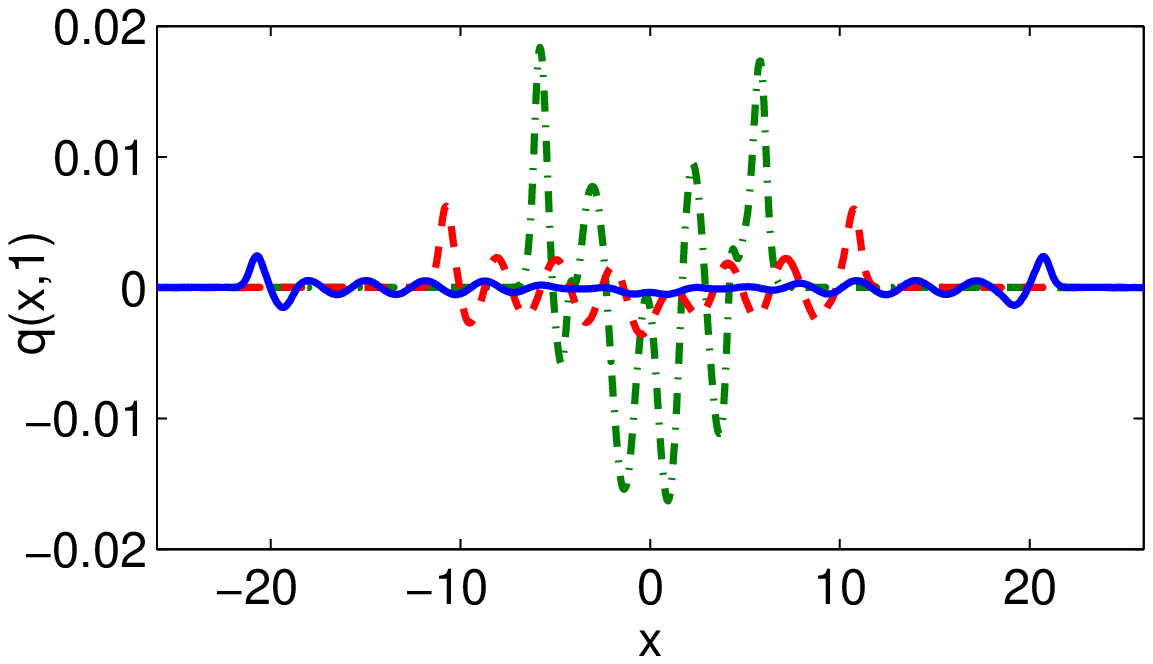}
\end{minipage}%
\hspace{1mm}
\begin{minipage}[t]{0.5\linewidth}
\centering
\includegraphics[height=4cm,width=6.9cm]{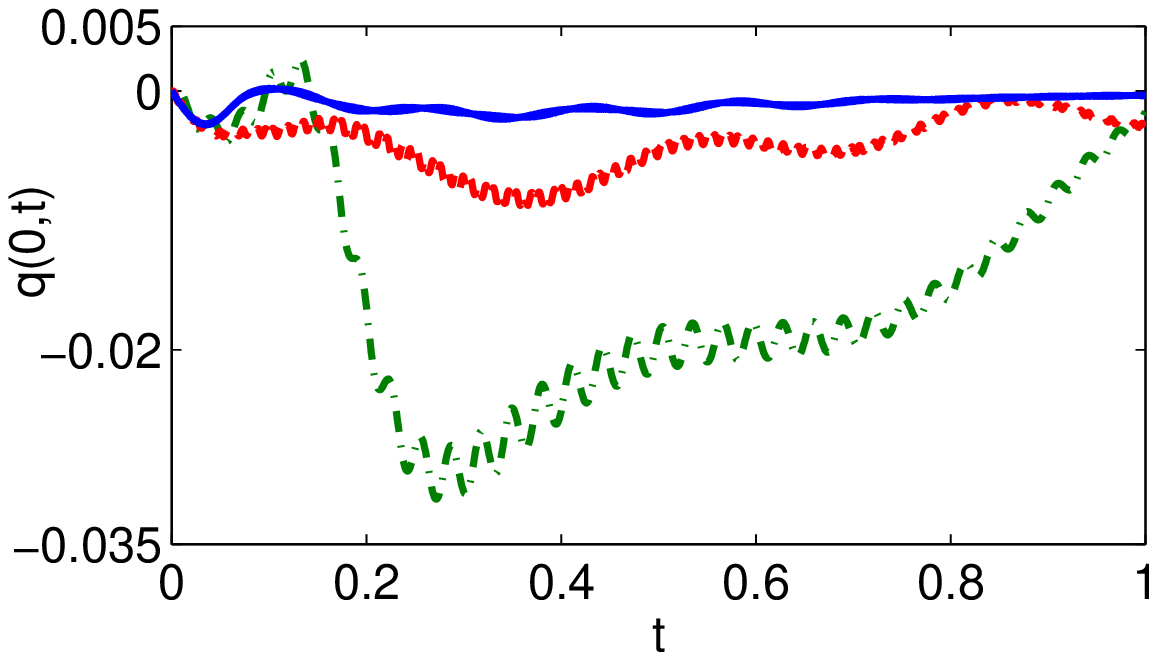}
\end{minipage}
\begin{minipage}[t]{0.5\linewidth}
\centering
\includegraphics[height=4cm,width=6.9cm]{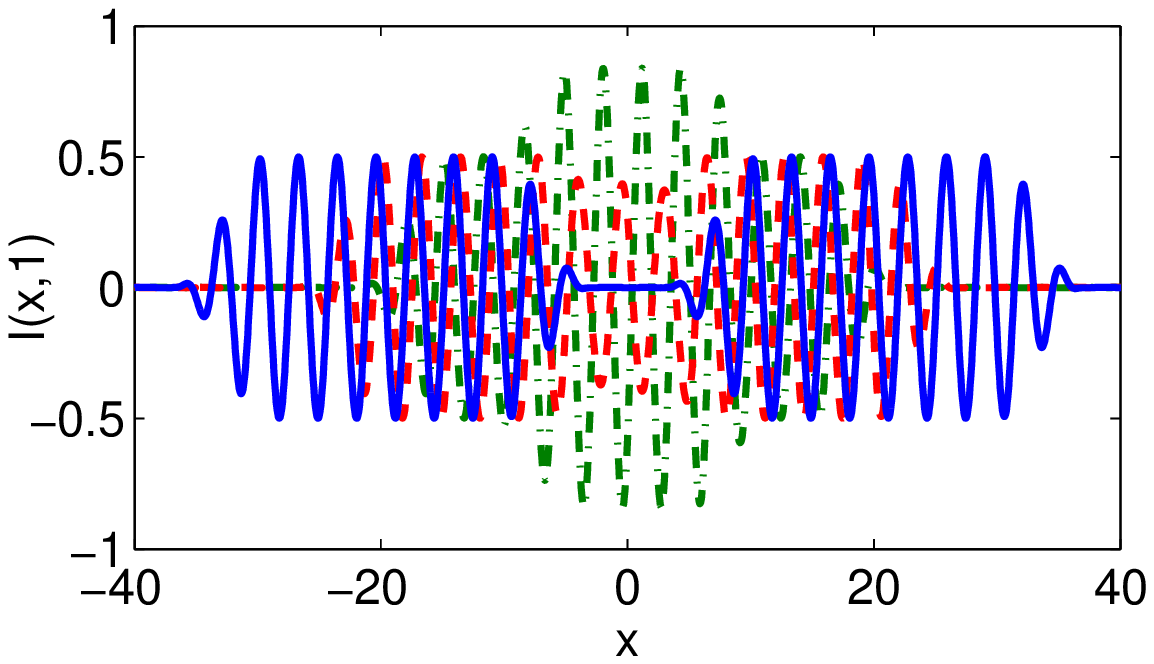}
\end{minipage}%
\hspace{1mm}
\begin{minipage}[t]{0.5\linewidth}
\centering
\includegraphics[height=4cm,width=6.9cm]{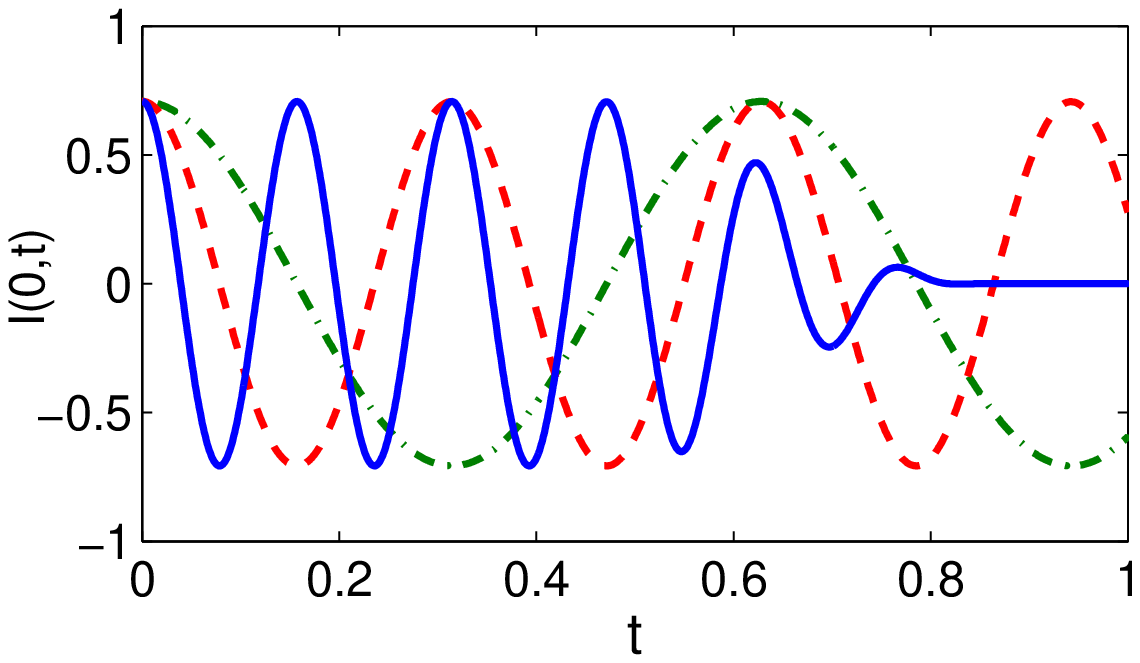}
\end{minipage}
\caption{Profiles of each component in (\ref{KGZ decomp}) for Example \ref{ex3} under
different $\eps$.}\label{fig5}
\end{figure}

\begin{figure}[t!]
\begin{minipage}[t]{0.32\linewidth}
\centering
\includegraphics[height=4cm,width=4.4cm]{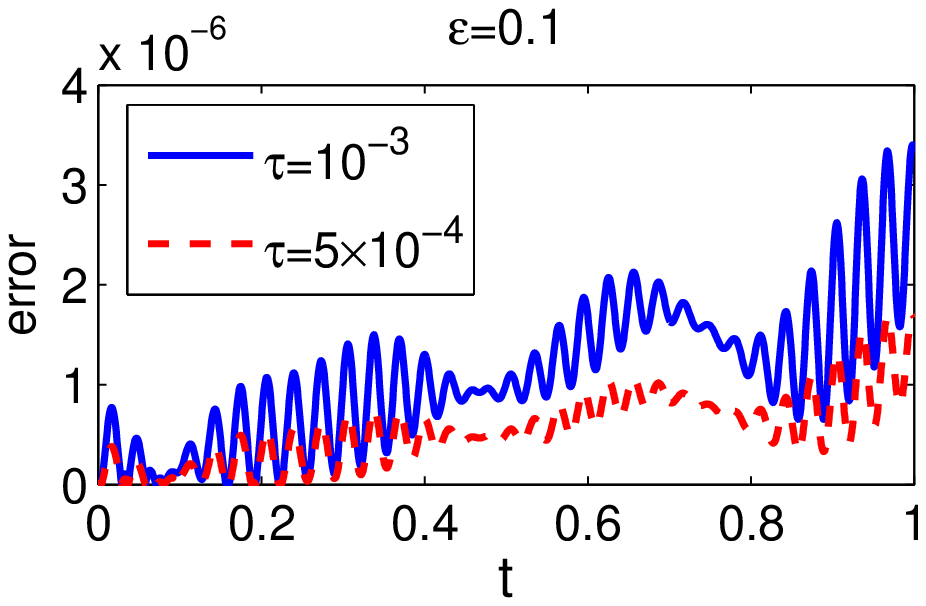}
\end{minipage}%
\hspace{1mm}
\begin{minipage}[t]{0.32\linewidth}
\centering
\includegraphics[height=4cm,width=4.4cm]{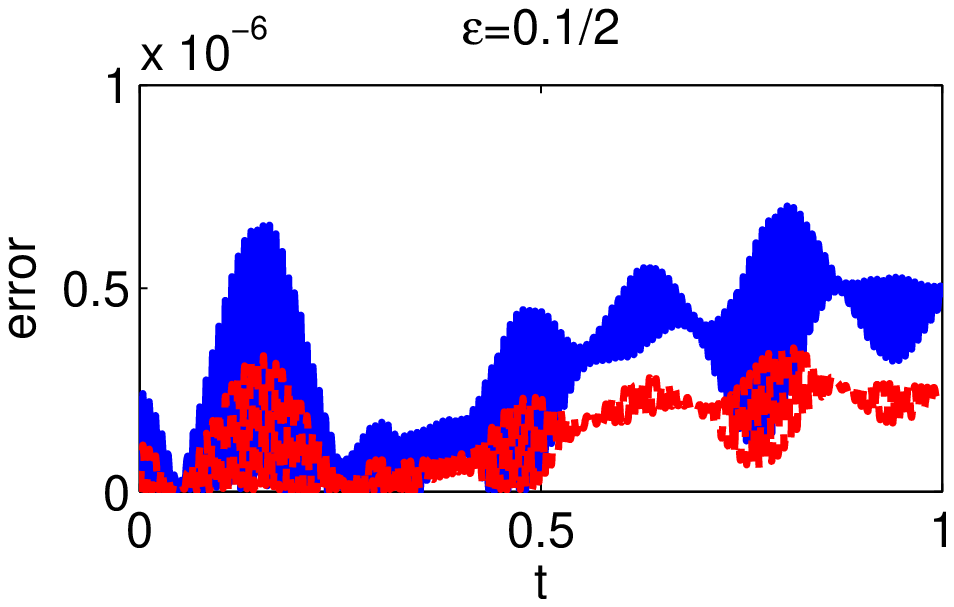}
\end{minipage}
\hspace{1mm}
\begin{minipage}[t]{0.32\linewidth}
\centering
\includegraphics[height=4cm,width=4.4cm]{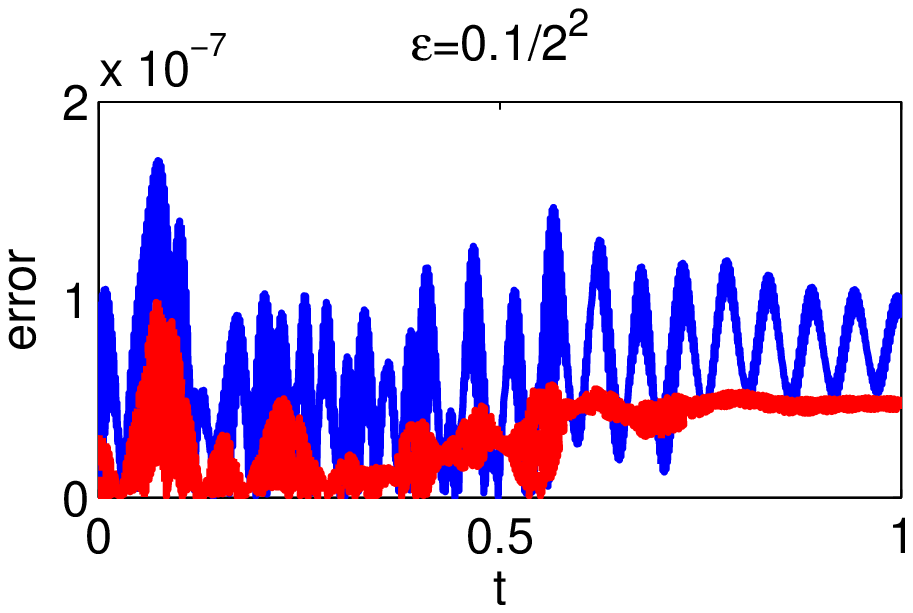}
\end{minipage}
\caption{Energy error of the MTI scheme for KGZ in Example \ref{ex3} under different $\eps$.}\label{fig6}
\end{figure}

\begin{figure}[t!]
\begin{minipage}[t]{0.5\linewidth}
\centering
\includegraphics[height=4cm,width=6.9cm]{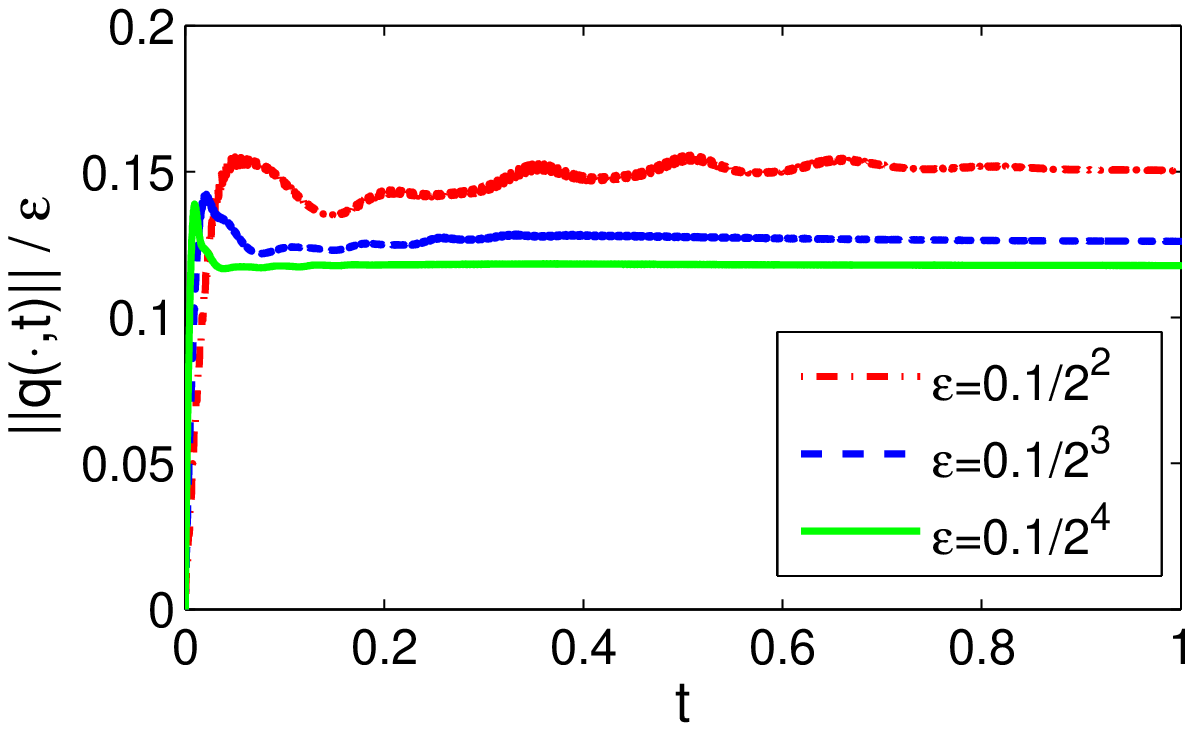}
\end{minipage}%
\hspace{1mm}
\begin{minipage}[t]{0.5\linewidth}
\centering
\includegraphics[height=4cm,width=6.9cm]{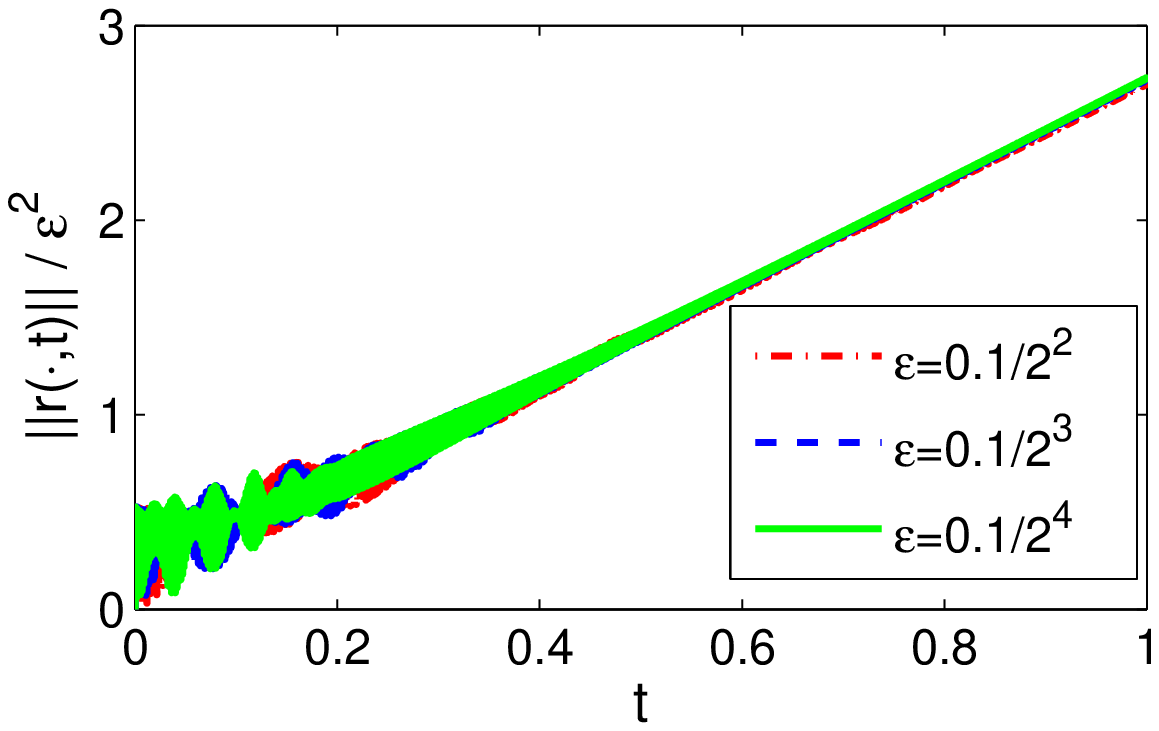}
\end{minipage}
\caption{Quantity $\|q(\cdot,t)\|_{L^2}/\eps$ and $\|r(\cdot,t)\|_{L^2}/\eps^2$ in Example \ref{ex3} under
different $\eps$.}\label{fig55}
\end{figure}
Then we study the convergence rate of the KGZ system to its limit model (\ref{Sch}) or (\ref{Sch2}) as $\eps<\gamma\to0^+$. By (\ref{app}) and (\ref{app2}), we denote
 $\psi_{\rm nls}=\fe^{it/\eps^2}z_{\rm nls}+\fe^{-it/\eps^2}
 \overline{z_{\rm nls}}$, $\phi_{\rm nls}=-2|z_{\rm nls}|^2+I_{\rm nls}$,
  $\psi_{\rm op}=\fe^{it/\eps^2}z_{\rm op}+\fe^{-it/\eps^2}
 \overline{z_{\rm op}}$ and $\phi_{\rm op}=-2|z_{\rm op}|^2+I$,
 and we define
\begin{align*}
&\eta_{\rm nls}^\phi(t):=\|\phi(\cdot,t)-\phi_{\rm nls}(\cdot,t)\|_{L^2},\quad \eta_{\rm nls}^\psi(t):=\|\psi(\cdot,t)-\psi_{\rm nls}(\cdot,t)\|_{L^2},\\
&\eta_{\rm op}^\phi(t):=\|\phi(\cdot,t)-\phi_{\rm op}(\cdot,t)\|_{L^2},\quad \eta_{\rm op}^\psi(t):=\|\psi(\cdot,t)-\psi_{\rm op}(\cdot,t)\|_{L^2}.
 \end{align*}
 Figure \ref{figlimit} shows $\eta_{\rm nls}^\phi(t)/\eps$, $\eta_{\rm nls}^\psi(t)/\eps$, $\eta_{\rm op}^\phi(t)/\eps$ and $\eta_{\rm op}^\psi(t)/\eps^2$ under different $\eps$.
 Finally, to further illustrate the efficiency of the MTI scheme and the super-resolution, we show in Figure \ref{fig7} the numerical solutions obtained by MTI under a fixed large time step $\tau=0.1$ for decreasing $\eps$.

\begin{figure}[t!]
\begin{minipage}[t]{0.5\linewidth}
\centering
\includegraphics[height=4cm,width=6.9cm]{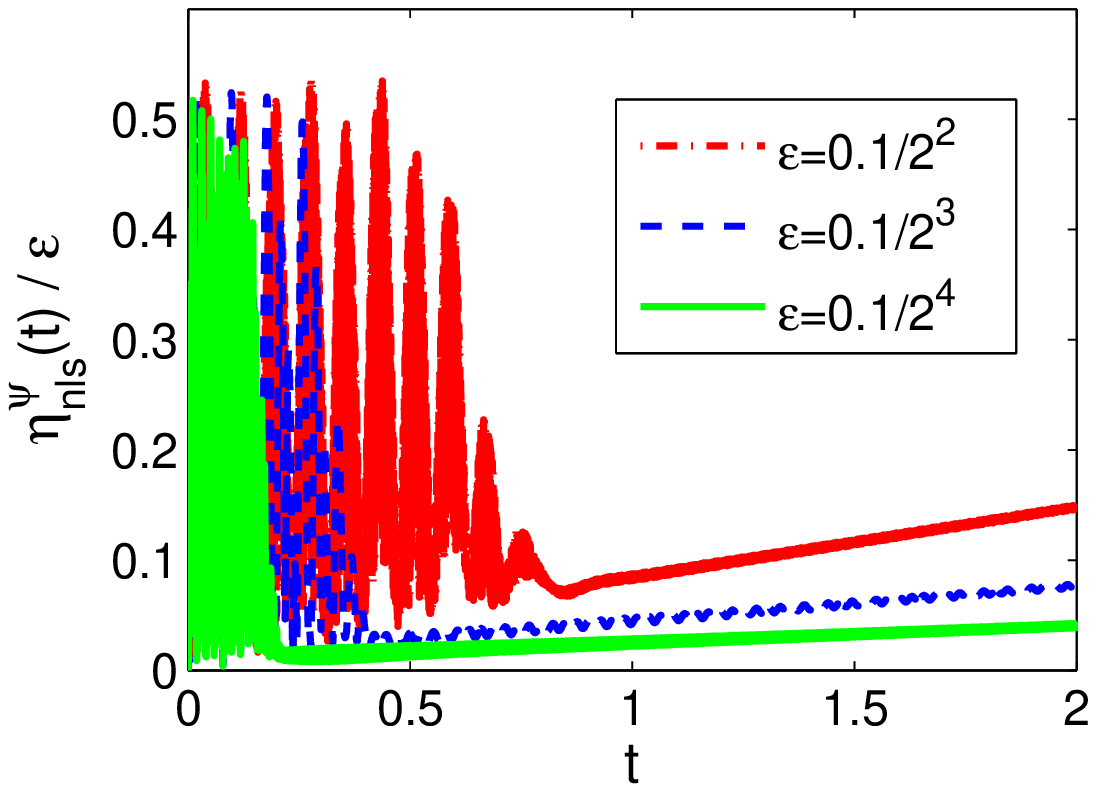}
\end{minipage}%
\hspace{1mm}
\begin{minipage}[t]{0.5\linewidth}
\centering
\includegraphics[height=4cm,width=6.9cm]{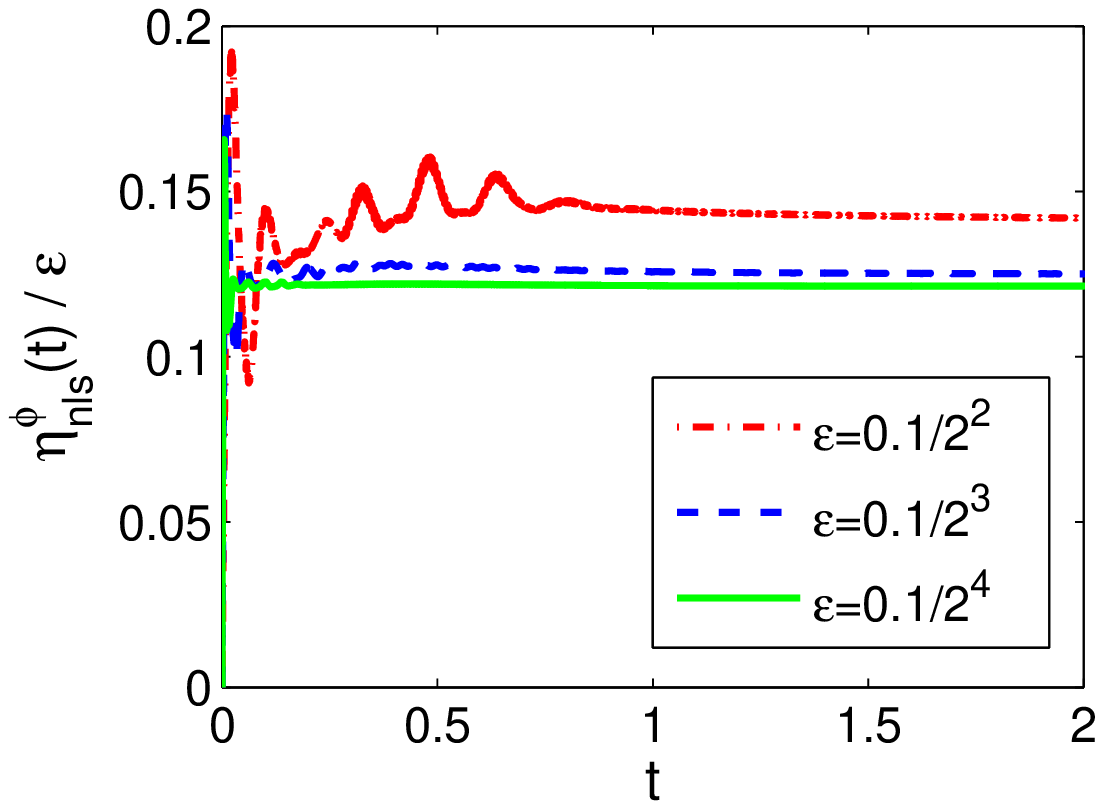}
\end{minipage}
\begin{minipage}[t]{0.5\linewidth}
\centering
\includegraphics[height=4cm,width=6.9cm]{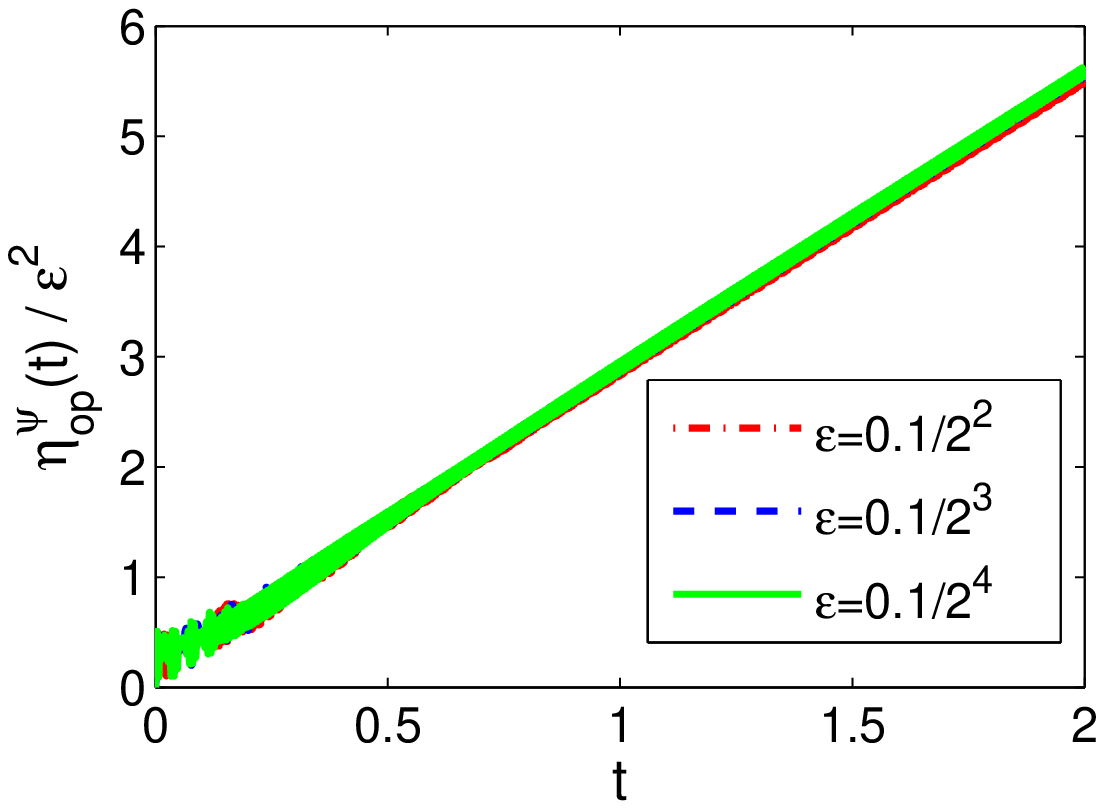}
\end{minipage}%
\hspace{1mm}
\begin{minipage}[t]{0.5\linewidth}
\centering
\includegraphics[height=4cm,width=6.9cm]{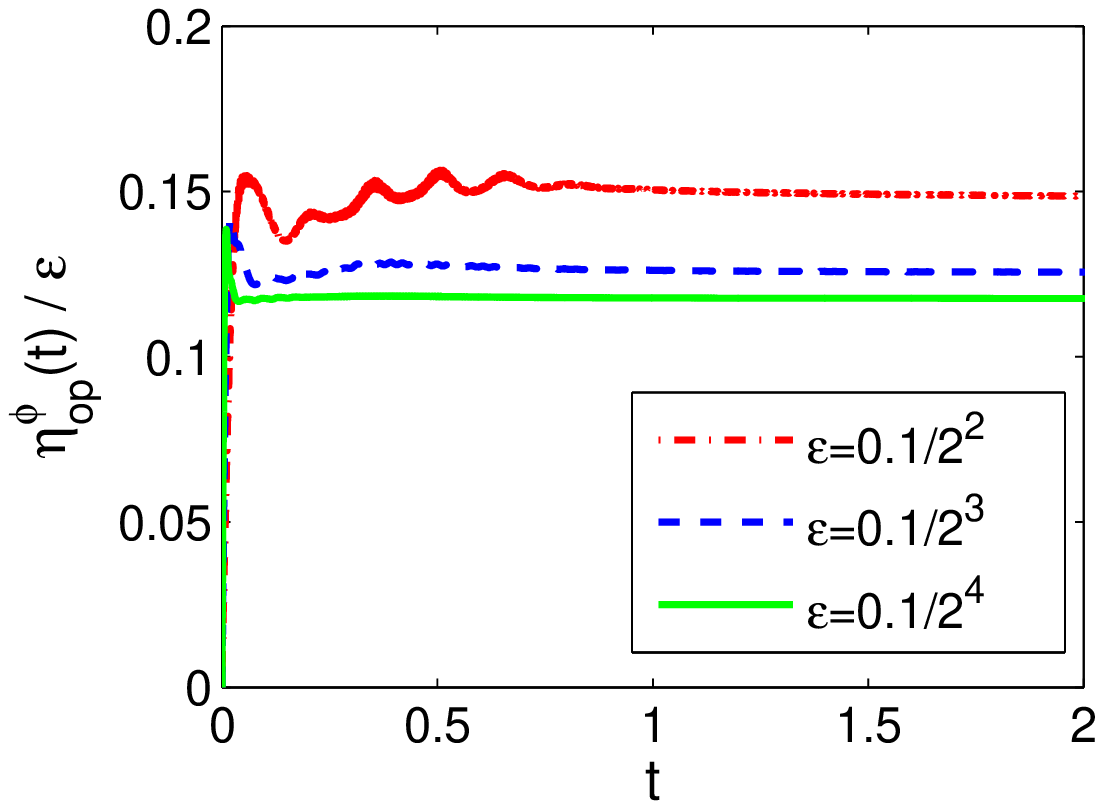}
\end{minipage}
\caption{Convergence from KGZ (\ref{KGZ}) to (\ref{Sch}) or (\ref{Sch2}) in Example \ref{ex3}: the quantities $\eta_{\rm nls}^\psi(t)/\eps,\,\eta_{\rm nls}^\phi(t)/\eps,\,\eta_{\rm op}^\psi(t)/\eps^2$ and $\eta_{\rm op}^\phi(t)/\eps$.}\label{figlimit}
\end{figure}
\begin{figure}[t!]
\begin{minipage}[t]{0.5\linewidth}
\centering
\includegraphics[height=4cm,width=6.9cm]{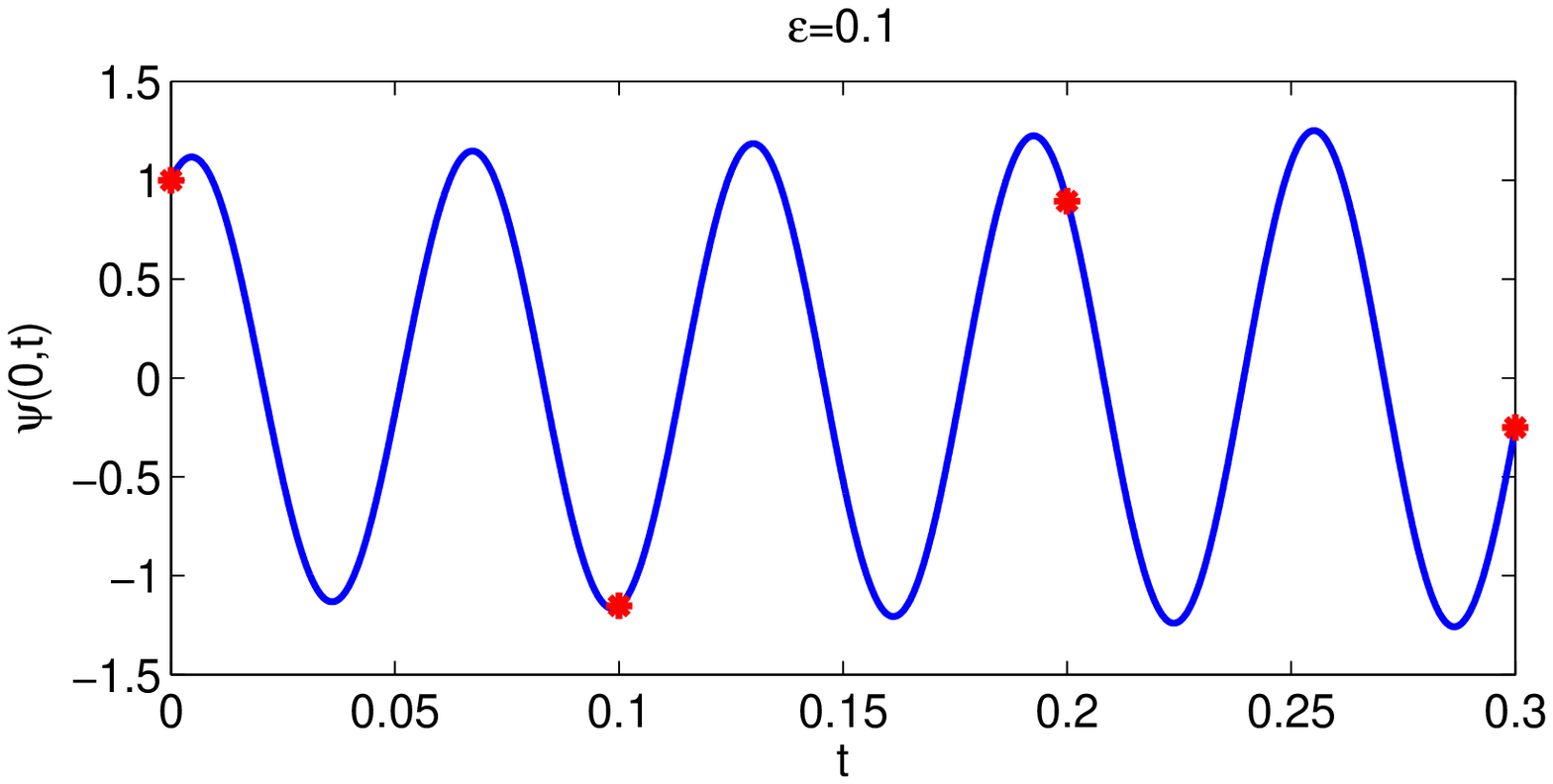}
\end{minipage}%
\hspace{1mm}
\begin{minipage}[t]{0.5\linewidth}
\centering
\includegraphics[height=4cm,width=6.9cm]{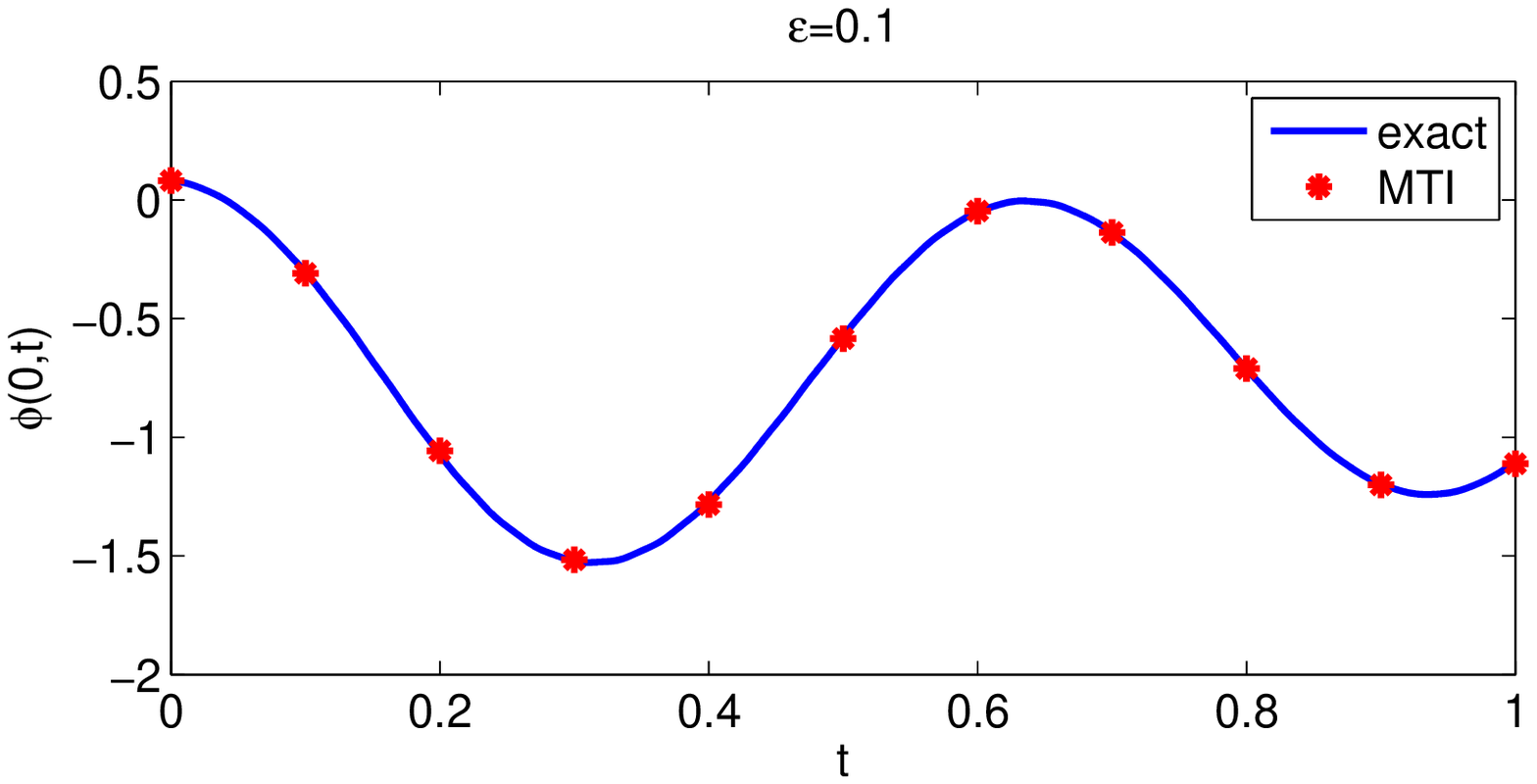}
\end{minipage}
\begin{minipage}[t]{0.5\linewidth}
\centering
\includegraphics[height=4cm,width=6.9cm]{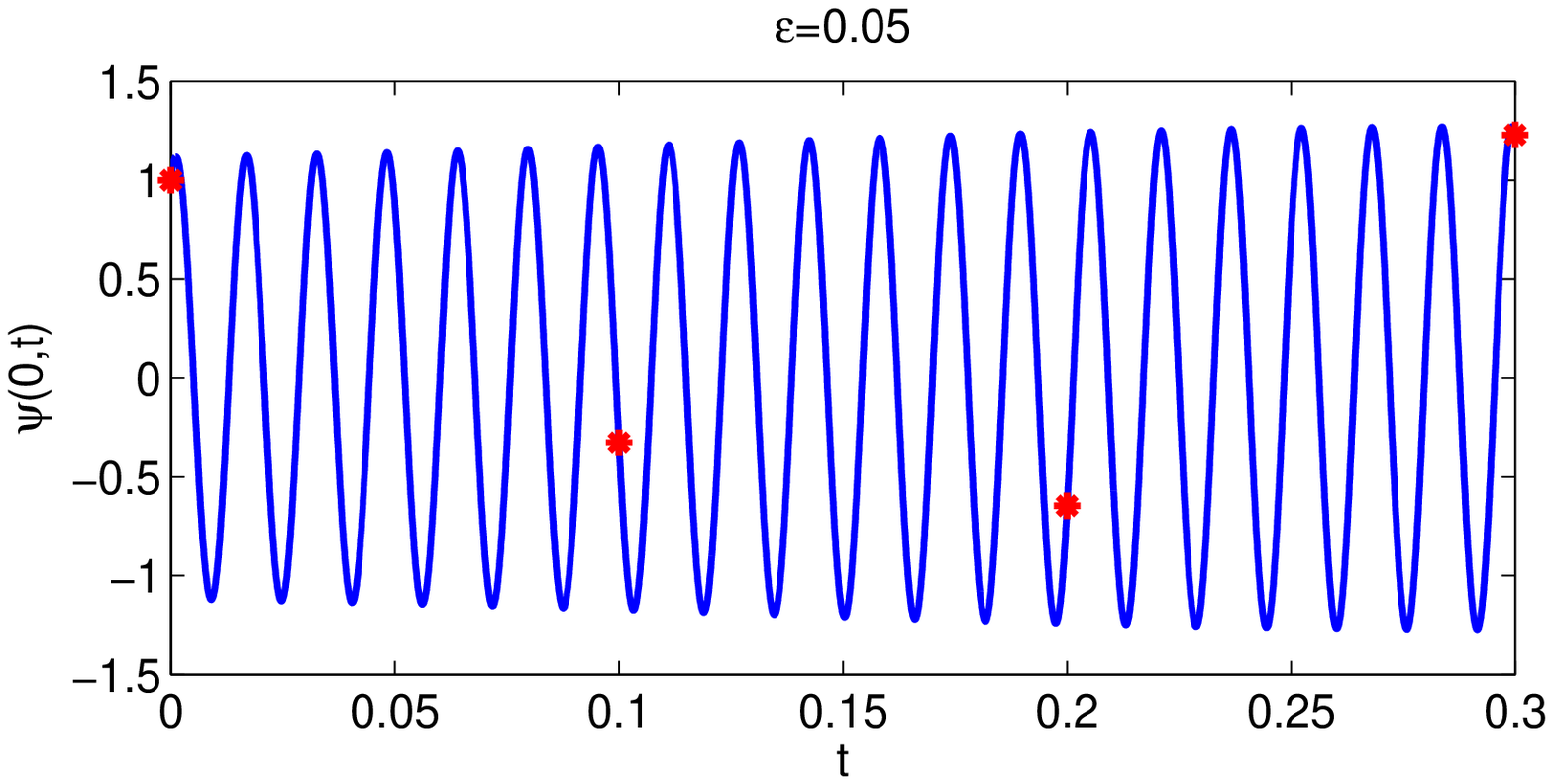}
\end{minipage}%
\hspace{1mm}
\begin{minipage}[t]{0.5\linewidth}
\centering
\includegraphics[height=4cm,width=6.9cm]{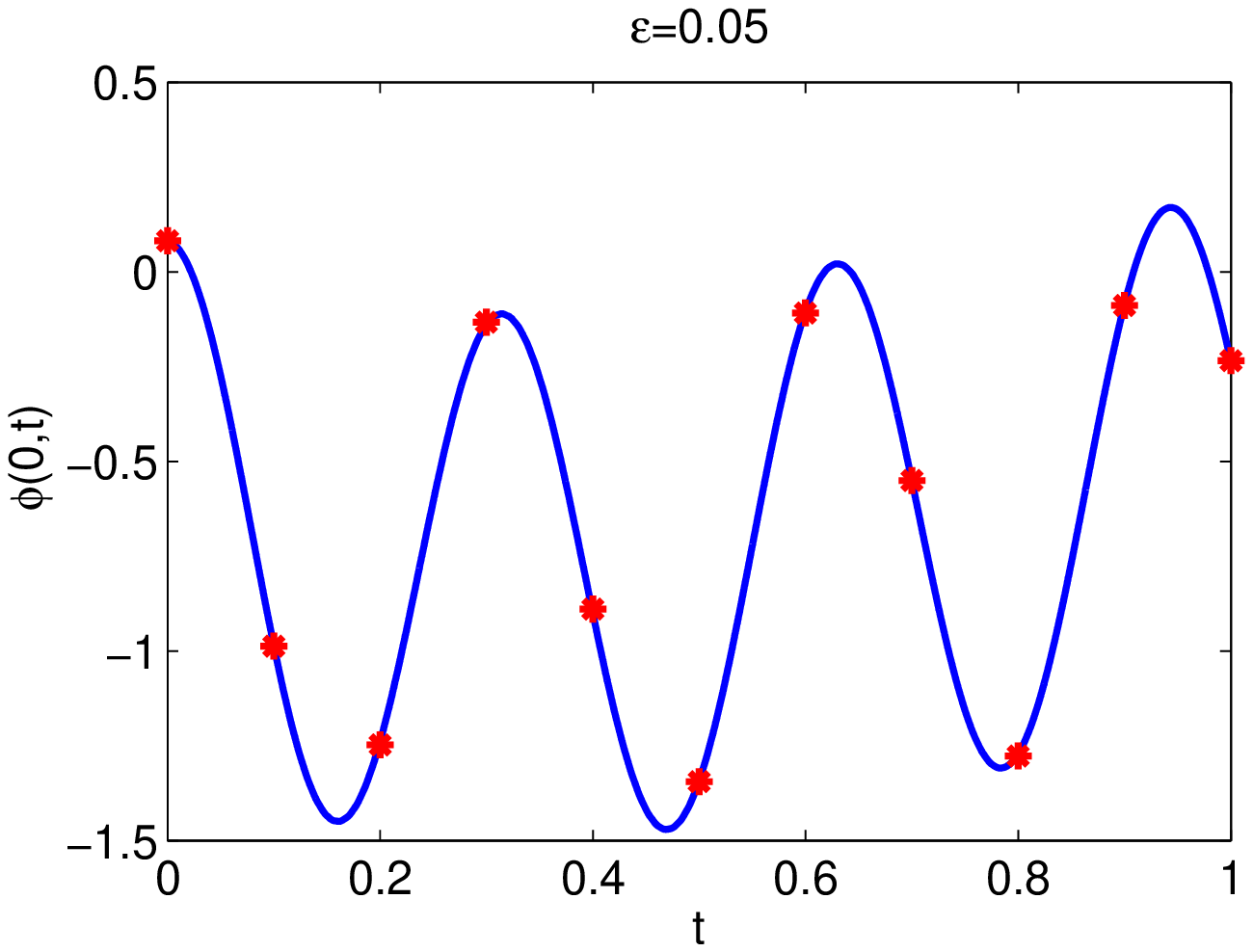}
\end{minipage}
\begin{minipage}[t]{0.5\linewidth}
\centering
\includegraphics[height=4cm,width=6.9cm]{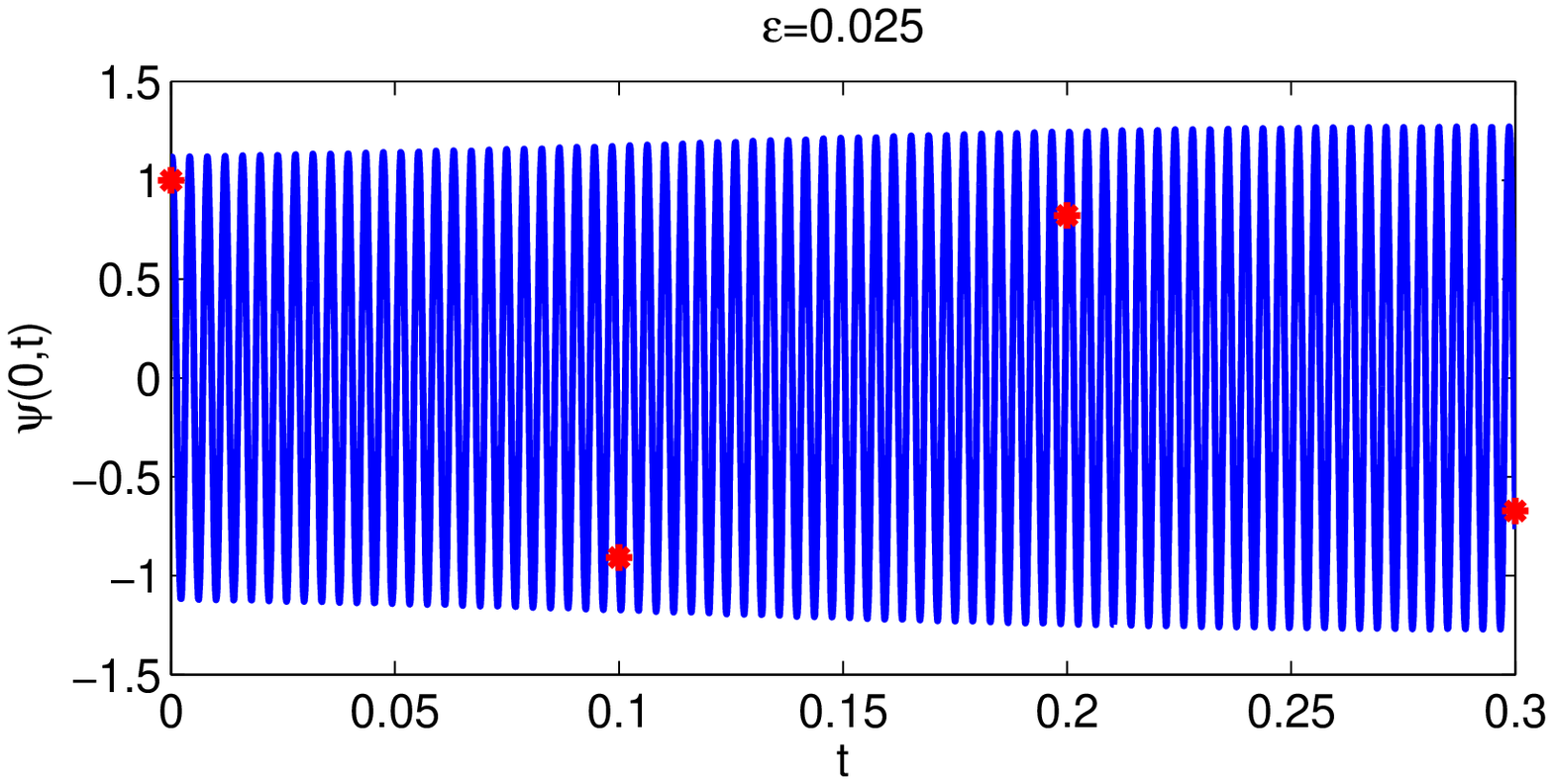}
\end{minipage}%
\hspace{1mm}
\begin{minipage}[t]{0.5\linewidth}
\centering
\includegraphics[height=4cm,width=6.9cm]{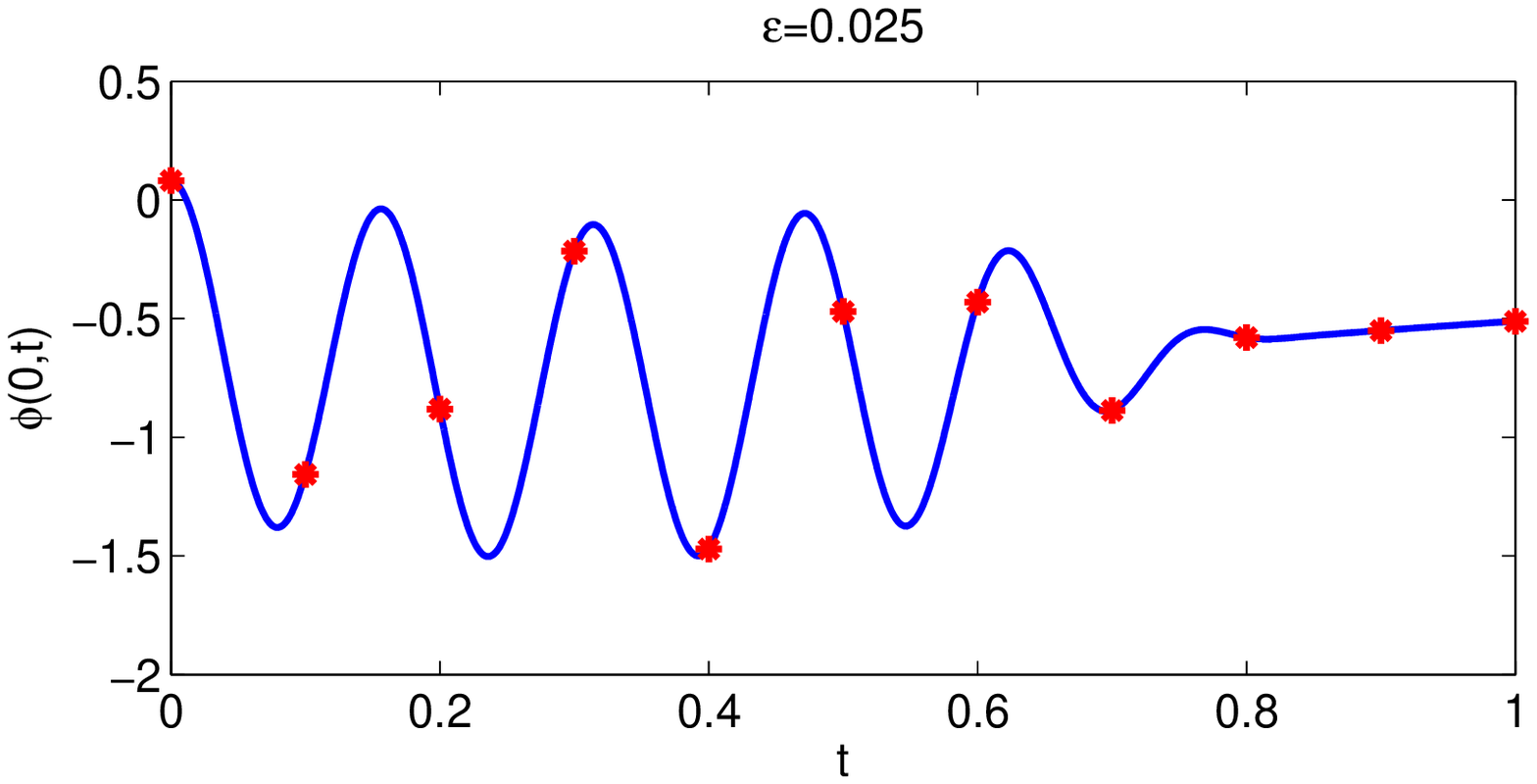}
\end{minipage}
\caption{The solutions $\psi(0,t)$ and $\phi(0,t)$ in Example \ref{ex3} under
different $\eps$: exact profiles and numerical solutions from MTI with fixed $\tau=0.1$.}\label{fig7}
\end{figure}

Based on the numerical results in Figures \ref{fig5}-\ref{fig7}, we have the following observations:

1) The dynamics of the KGZ system (\ref{KGZ}) is captured individually through the components $z,r,q,I$ in the decomposition (\ref{KGZ decomp}). Among them, $I$ and $q$ carry the fast outing initial layer caused by the incompatible initial data and the wave operator, respectively, while $z$ and $r$ remain rather localized (cf. Figure \ref{fig5}). To avoid the expanding domain for computation, one could consider an absorbing boundary condition for the equation of $q$ to gain more efficiency in practical simulation.

2) The energy error of the MTI scheme converges linearly in time (see Figure \ref{fig6}). The error is not only uniformly bounded for $\eps\in(0,1]$, but it also seems to have a super-convergence in $\eps$ in the limit $\eps\to0$ (see Figure \ref{fig6}).

3) The components $q$ and $r$ are highly oscillatory in time (see Figure \ref{fig5}), but they vanish at
$O(\gamma)$ and $O(\eps^2)$ (see Figure \ref{fig55}), respectively, in the limit $\eps<\gamma\to0^+$. This verifies our estimates in Proposition \ref{prop}.

4) The KGZ system (\ref{KGZ}) converges to the limit model (\ref{Sch}) at the first order rate (see Figure \ref{figlimit}), i.e.,
$\eta_{\rm nls}^\psi(t)=O(\eps)$ and $\eta_{\rm nls}^\phi(t)=O(\eps)$ as $ \eps\to0,$
while its convergence rate to the semi-limit model (\ref{Sch2}) is improved to be quadratic in $\psi$, i.e., $\eta_{\rm op}^\psi(t)=O(\eps^2)$.

5) The MTI scheme has super-resolution to the temporal oscillations. It can correctly capture the oscillation with a fixed time step, no matter how strong the oscillation becomes (see Figure \ref{fig7}). This significantly improves the efficiency of computation compared to standard numerical methods that need to fully resolve the oscillations.

\section{Conclusion}\label{sec:5}
We considered the numerical solution of the Klein-Gordon-Zakharov (KGZ) system in the simultaneous high-plasma-frequency and subsonic limit regime, where two independent small parameters $0<\eps,\gamma\leq1$ are involved. When $\eps,\gamma\to0$, the solution of the KGZ equations exhibits complicated highly oscillatory behaviour including fast temporal oscillations and rapid out-going initial layers, which makes standard numerical methods suffer. By applying a multiscale expansion to the solution in the critical case $\eps<\gamma$, we decomposed KGZ into a consistent formulation with milder oscillations and an explicit description of the initial layer. Formal estimates were established for the decomposed system to explain the advantage of the formulation. Based on the decomposed formulation, we proposed a multiscale time integrator Fourier spectral/pseudospectral method for solving KGZ, which is uniformly and optimally accurate for all $0<\eps<\gamma\leq1$. Various numerical experiments were conducted to illustrate the efficiency and accuracy of the proposed scheme over existing methods. Convergence rates of the KGZ system to its limit/semi-limit model as $\eps<\gamma\to0^+$ were studied numerically.

\appendix
\section{A benchmark algorithm} As a benchmark for reference solution and comparisons, we briefly present the exponential integrator Fourier spectral method \cite{KGZ,XZ} in 1D which is a classical scheme \cite{Deuflhard,Lubichbook} for solving the KGZ system.

Taking the Fourier transform of the KGZ system (\ref{KGZ}) in 1D and using
the Duhamel's formula, one gets
\begin{align*}
&\widehat{\psi}_l(t_{n+1})=\cos(\omega_l\tau)\widehat{\psi}_l(t_n)+\frac{\sin(\omega_l\tau)}{\omega_l}\widehat{\psi}_l'(t_n)
-\int_0^\tau\frac{\sin(\omega_l(\tau-s))}{\eps^2\omega_l}\widehat{(\psi\phi)}_l(t_n+s)ds,\\
&\widehat{\phi}_l(t_{n+1})=\cos(\theta_l\tau)\widehat{\phi}_l(t_n)+\frac{\sin(\theta_l\tau)}{\theta_l}\widehat{\phi}_l'(t_n)
-\theta_l\int_0^\tau\sin(\theta_l(\tau-s))\widehat{(\psi^2)}_l(t_n+s)ds,\\
&\widehat{\psi}_l'(t_{n+1})=-\omega_l\sin(\omega_l\tau)\widehat{\psi}_l(t_n)+\cos(\omega_l\tau)\widehat{\psi}_l'(t_n)
-\int_0^\tau\frac{\cos(\omega_l(\tau-s))}{\eps^2}\widehat{(\psi\phi)}_l(t_n+s)ds,\\
&\widehat{\phi}_l'(t_{n+1})=-\theta_l\sin(\theta_l\tau)\widehat{\phi}_l(t_n)+\cos(\theta_l\tau)\widehat{\phi}_l'(t_n)
-\theta_l^2\int_0^\tau\cos(\theta_l(\tau-s))\widehat{(\psi^2)}_l(t_n+s)ds.
\end{align*}
By applying the trapezoidal rule to approximate the integrals, the explicit Deuflhard-type exponential integrator (EI) Fourier spectral method reads: $\psi^n(x)\approx\psi(t_n,x)$, $\dot{\psi}^n(x)\approx\partial_t\psi(t_n,x)$,
$\phi^n(x)\approx\phi(t_n,x)$, $\dot{\phi}^n(x)\approx\partial_t\phi(t_n,x)$, where for $n\geq0$,
$l=-N/2,\ldots,N/2-1$,
\begin{equation}\tag{A.1}\label{EWI}
\begin{split}
&\widehat{(\psi^{n+1})}_l=\cos(\omega_l\tau)\widehat{(\psi^n)}_l
+\frac{\sin(\omega_l\tau)}{\omega_l}\widehat{(\dot{\psi}^n)}_l
-\frac{\tau\sin(\omega_l\tau)}{2\eps^2\omega_l}\widehat{(\psi^n\phi^n)}_l,\\
&\widehat{(\phi^{n+1})}_l=\cos(\theta_l\tau)\widehat{(\phi^n)}_l
+\frac{\sin(\theta_l\tau)}{\theta_l}\widehat{(\dot{\phi}^n)}_l
-\frac{\tau\theta_l}{2}\sin(\theta_l\tau)\widehat{((\psi^n)^2)}_l, \\
&\widehat{(\dot{\psi}^{n+1})}_l=-\omega_l\sin(\omega_l\tau)\widehat{(\psi^n)}_l
+\cos(\omega_l\tau)\widehat{(\dot{\psi}^n)}_l
-\frac{\tau}{2\eps^2}\left[\cos(\omega_l\tau)\widehat{(\psi^n\phi^n)}_l
+\widehat{(\psi^{n+1}\phi^{n+1})}_l\right],\\
&\widehat{(\dot{\phi}^{n+1})}_l=-\theta_l\sin(\theta_l\tau)\widehat{\phi}_l^n
+\cos(\theta_l\tau)\widehat{(\dot{\phi}^n)}_l
-\frac{\theta_l^2\tau}{2}\left[\cos(\theta_l\tau)\widehat{((\psi^n)^2)}_l+\widehat{((\psi^{n+1})^2)}_l\right].
\end{split}
\end{equation}

\bibliographystyle{SIAM}

\end{document}